\setlist[enumerate]{leftmargin=.5in}
\setlist[itemize]{leftmargin=.5in}
\crefname{hypothesis}{Hypothesis}{Hypotheses}
\title{A quasi-Monte Carlo Method for an Optimal Control Problem Under Uncertainty\thanks{\today.}}
\author{Philipp A. Guth\footnotemark[2]\thanks{Institute of Mathematics, University of Mannheim, 68159 Mannheim, Germany (\email{p.guth@mail.uni-mannheim.de},
\email{c.schillings@uni-mannheim.de}). PG is grateful to the DFG RTG1953 ``Statistical Modeling of Complex Systems and Processes'' for funding of this research. PG thanks the IPID4all/DAAD-Project ``Internationalization of Doctoral Education@the University of Mannheim'' for funding his research trip to University of New South Wales, Sydney.}
\and Vesa Kaarnioja\footnotemark[3]\thanks{School of Mathematics and Statistics, University of New South Wales, Sydney NSW 2052, Australia (\email{v.kaarnioja@unsw.edu.au},
\email{f.kuo@unsw.edu.au},
\email{i.sloan@unsw.edu.au}). VK, FK and IS gratefully acknowledge the financial support from the Australian Research Council (DP180101356).}
\and Frances Y. Kuo\footnotemark[3]
\and Claudia Schillings\footnotemark[2]
\and Ian H. Sloan\footnotemark[3]}
\crefname{subsection}{Subsection}{}
\crefname{section}{Section}{}
\newcommand{\bsx}{{\pmb x}}
\newcommand{\bsy}{{\pmb y}}
\newcommand{\rd}{{\mathrm{d}}}
\newcommand{\bseta}{{\pmb \eta}}
\newcommand{\bsnu}{{\pmb \nu}}
\newcommand{\bsmu}{{\pmb \mu}}
\newcommand{\bsm}{{\pmb m}}
\begin{document}

\maketitle


\begin{abstract}
We study an optimal control problem under uncertainty, where the target function is the solution of an elliptic partial differential equation with random coefficients, steered by a control function. The robust formulation of the optimization problem is stated as a high-dimensional integration problem over the stochastic variables. It is well known that carrying out a high-dimensional numerical integration of this kind using a Monte Carlo method has a notoriously slow convergence rate; meanwhile, a faster rate of convergence can potentially be obtained by using sparse grid quadratures, but these lead to discretized systems that are non-convex due to the involvement of negative quadrature weights. In this paper, we analyze instead the application of a quasi-Monte Carlo method, which retains the desirable convexity structure of the system and has a faster convergence rate compared to ordinary Monte Carlo methods. In particular, we show that under moderate assumptions on the decay of the input random field, the error rate obtained by using a specially designed, randomly shifted rank-1 lattice quadrature rule is essentially inversely proportional to the number of quadrature nodes. The overall discretization error of the problem, consisting of the dimension truncation error, finite element discretization error and quasi-Monte Carlo quadrature error, is derived in detail. We assess the theoretical findings in numerical experiments.
\end{abstract}

\begin{keywords}
  optimal control, uncertainty quantification, quasi-Monte Carlo method, PDE-constrained optimization with uncertain coefficients, optimization under uncertainty
\end{keywords}

\begin{AMS}
  49J20, 65D30, 65D32
\end{AMS}

\section{Introduction}
In this paper we consider an optimal control problem in the presence of uncertainty: the target function is the solution of an elliptic partial differential equation (PDE), steered by a control function, and having a random field as input coefficient. The random field is in principle infinite-dimensional, and in practice might need a large finite number of terms for accurate approximation. The novelty lies in the use and analysis of a specially designed quasi-Monte Carlo method to approximate the possibly high-dimensional integrals with respect to the stochastic variables.

Specifically, we consider the optimal control problem of finding 
\begin{equation}
\min_{z \in L^2(\Omega)} J(u,z)\,,\quad J(u,z) := \frac{1}{2} \int_{\Xi}\, \int_{\Omega} (u(\pmb x,\pmb y)-u_0(\pmb x))^2\, \mathrm d\pmb x\, \mathrm d\pmb y + \frac{\alpha}{2} \int_{\Omega} z(\pmb x)^2\, \mathrm d\pmb x\,, \label{eq:1.1}
\end{equation}
subject to the partial differential equation
\begin{align}
- \nabla \cdot (a(\pmb x,\pmb y) \nabla u(\pmb x,\pmb y)) &= z(\pmb x) \quad &\pmb x \in \Omega\,,&\quad \pmb y \in \Xi\,,\label{eq:1.2} \\
u(\pmb x,\pmb y) &= 0 \quad &\pmb x \in \partial \Omega\,,& \quad \pmb y\in \Xi\,, \label{eq:1.3} \\
z_{\min}(\pmb x) \leq&\ z(\pmb x) \leq z_{\max}(\pmb x) \quad &\text{a.e.~in}\ \Omega\,,& \label{eq:1.4}
\end{align}
for $\alpha > 0$ and a bounded domain $\Omega \subset \mathbb R^d$ with Lipschitz boundary $\partial \Omega$, where $d = 1,2$ or $3$. Further we assume
\begin{align}
u_0,z_{\min}, z_{\max} & \in L^2(\Omega)\,, \notag \\
z_{\min} \leq z_{\max} &\text{ a.e.~in } L^2(\Omega)\,. \label{eq:1.7}
\end{align}
Hence $\mathcal Z$, the set of \emph{feasible controls}, is defined by
\begin{align*}
\mathcal Z = \{ z \in L^2(\Omega)\ :\ z_{\min} \leq z \leq z_{\max}\quad \text{a.e.~in } \Omega \}\,.
\end{align*}
Note that $\mathcal Z$ is bounded, closed and convex and by \cref{eq:1.7} it is non-empty.

The gradients in \cref{eq:1.2} are understood to be with respect to the physical variable $\pmb x \in \Omega$, whereas $\pmb y \in \Xi$ is an infinite-dimensional vector $\pmb y = (y_j)_{j \geq 1}$ consisting of a countable number of parameters $y_j$, which are assumed to be independently and identically distributed (i.i.d.) uniformly in $[-\frac{1}{2},\frac{1}{2}]$ and we denote 
\begin{displaymath}
\Xi := \left[-\tfrac{1}{2},\tfrac{1}{2}\right]^{\mathbb N}\,.
\end{displaymath}
The parameter $\pmb y$ is then distributed on $\Xi$ with probability measure $\mu$, where
\begin{displaymath}
\mu(\mathrm d\pmb y) = \bigotimes_{j\geq1} \mathrm dy_j = \mathrm d\pmb y
\end{displaymath}
is the uniform probability measure on $\Xi$.

The input uncertainty is described by the parametric diffusion coefficient $a(\pmb x,\pmb y)$ in \cref{eq:1.2}, which is assumed to depend linearly on the parameters $y_j$, i.e.,
\begin{equation}
a(\pmb x,\pmb y) = \bar{a}(\pmb x) + \sum_{j\geq1} y_j\, \psi_j (\pmb x)\,, \quad \pmb x \in \Omega\,,\quad \pmb y \in \Xi\,. \label{eq:1.9}
\end{equation}
In order to ensure that the diffusion coefficient $a(\pmb x,\pmb y)$ is well defined for all $\pmb y \in \Xi$, we assume
\begin{align}
\bar{a} & \in L^{\infty}(\Omega)\,, \quad \sum_{j\geq 1}\ \|\psi_j\|_{L^{\infty}(\Omega)} < \infty\,. \label{eq:1.10}
\end{align}
Later in this article we shall impose a number of assumptions on the coefficients $a(\pmb x,\pmb y)$ as required.

A comprehensive overview of other possible formulations of the optimal control problem \cref{eq:1.1,eq:1.2,eq:1.3,eq:1.4} can be found, e.g., in \cite{AUH,BSSW}. They differ primarily in the computational cost and the robustness of the control with respect to the uncertainty. A lot of work \cite{AUH,ChenGhattas,KHRB,KunothSchwab2} has been done on formulations with stochastic controls, i.e., when the control depends directly on the uncertainty. Since practitioners often require a single deterministic control, the so-called robust deterministic formulation \cref{eq:1.1,eq:1.2,eq:1.3,eq:1.4} has received increasing attention in the recent past. This deterministic reformulation of the optimal control problem is based on a risk measure, such as the expected value, the conditional value-at-risk \cite{KouriSurowiec} or the combination of the expected value and the variance \cite{vanBarelVandewalle}.
Approaches to solve the resulting robust optimization problems include, e.g., Taylor approximation methods \cite{CVG}, sparse grids \cite{SparseGrids,KouriSurowiec} and multilevel Monte Carlo methods \cite{vanBarelVandewalle}. Multilevel Monte Carlo methods have first been analyzed for robust optimal control problems in the fundamental work \cite{vanBarelVandewalle}. Together with confirming numerical evidence, the theory in \cite{vanBarelVandewalle} shows the vast potential cost savings resulting from the application of multilevel Monte Carlo methods. Monte Carlo based methods do not require smoothness of the integrand with respect to the uncertain parameters. However, for many robust optimization problems, the integrands in the robust formulations are in fact smooth with respect to the uncertainty.

In this paper we propose the application of a quasi-Monte Carlo method to approximate the expected values with respect to the uncertainty. Quasi-Monte Carlo methods have been shown to perform remarkably well in the application to PDEs with random coefficients \cite{DKGNS, DKGS, GKNSSS, KKS, Kuo2016ApplicationOQ, KuoNuyens2018, KSSSU, Kuo2012QMCFEM, KSS2015, Schwab}. The reason behind their success is that it is possible to design quasi-Monte Carlo rules with error bounds not dependent on the number of uncertain variables, which achieve faster convergence rates compared to Monte Carlo methods in case of smooth integrands. In addition, quasi-Monte Carlo methods preserve the convexity structure of the optimal control problem due to their nonnegative (equal) quadrature weights. This work focuses on error estimates and convergences rates for the dimension truncation, the finite element discretization and the quasi-Monte Carlo quadrature, which are presented together with confirming numerical experiments.

This paper is structured as follows. The parametric weak formulation of the PDE problem is given in \cref{sec:2}. The corresponding optimization problem is discussed in \cref{sec:3}, with the unique solvability of the optimization problem considered in \cref{subsection31} and the requisite optimality conditions given in \cref{subsection32}. The gradient descent algorithm and its projected variant as they apply to our problem are presented in \cref{sec:Gradient descent} and \cref{subsection41}, respectively. The error analysis of \cref{sec:5} contains the main new theoretical results of this paper. \cref{subsection51} is concerned with the dimension truncation error, while \cref{section:FE discretization} addresses the finite element discretization error of the PDE problem. The regularity of the adjoint PDE problem is the topic of \cref{subsection54}, which leads to \cref{subsection:QMC} covering the quasi-Monte Carlo (QMC) integration error. \cref{section:optimalweights} details the design of optimally chosen weights for the QMC algorithm. Finally, the combined error and convergence rates for the PDE-constrained optimization problem are summarized in \cref{subsection56}.


\section{Parametric weak formulation}
\label{sec:2}

We state the variational formulation of the parametric elliptic boundary value problem \cref{eq:1.2,eq:1.3} for each value of the parameter $\pmb y \in \Xi$ together with sufficient conditions for the existence and uniqueness of solutions.

Our variational setting of \cref{eq:1.2} and \cref{eq:1.3} is based on the Sobolev space $H_0^1(\Omega)$ and its dual space $H^{-1}(\Omega)$ with the norm in $H_0^1(\Omega)$ defined by
\begin{align*}
\|v\|_{H_0^1(\Omega)} := \|\nabla v\|_{L^2(\Omega)}\,.
\end{align*}
The duality between $H_0^1(\Omega)$ and $H^{-1}(\Omega)$ is understood to be with respect to the pivot space $L^2(\Omega)$, which we identify with its own dual. We denote by $\langle \cdot,\cdot \rangle$ the $L^2(\Omega)$ inner product and the duality pairing between $H_0^1(\Omega)$ and $H^{-1}(\Omega)$. We introduce the continuous embedding operators $E_1: L^2(\Omega) \to H^{-1}(\Omega)$ and $E_2: H_0^1(\Omega) \to L^2(\Omega)$, with the embedding constants $c_1,c_2>0$ for the norms 
\begin{align}
\|v\|_{H^{-1}(\Omega)} &\leq c_1 \|v\|_{L^2(\Omega)} \label{c1}\,,\\
\|v\|_{L^2(\Omega)} &\leq c_2 \|v\|_{H_0^1(\Omega)} \label{c2}\,.
\end{align}

For fixed $\pmb y \in \Xi$, we obtain the following parameter-dependent weak formulation of the parametric deterministic boundary value problem  \cref{eq:1.2,eq:1.3}: 
for $\pmb y \in \Xi$ find $u(\cdot,\pmb y) \in H_0^1(\Omega)$ such that
\begin{equation}
\int_{\Omega} a(\pmb x,\pmb y) \nabla u(\pmb x,\pmb y)  \cdot \nabla v(\pmb x)\, \mathrm d\pmb x = \int_{\Omega} z(\pmb x) v(\pmb x)\, \mathrm d\pmb x\quad \forall v \in H_0^1(\Omega)\,. \label{eq:2.1}
\end{equation}
The parametric bilinear form $b(\pmb y;w,v)$ for $\pmb y \in \Xi$ is given by
\begin{equation}
b(\pmb y;w,v) := \int_{\Omega} a(\pmb x,\pmb y) \nabla w(\pmb x) \cdot \nabla v(\pmb x)\ \mathrm d\pmb x \quad \forall w,v \in H_0^1(\Omega)\,, \label{eq:2.2}
\end{equation}
allowing us to write the weak form of the PDE as
\begin{align}
b(\pmb y;u(\cdot,\pmb y),v) = \langle z,v \rangle \quad \forall v \in H_0^1(\Omega)\,.\label{eq:parametricweakproblem}
\end{align}

Throughout this paper we assume in addition to \cref{eq:1.9,eq:1.10} that
\begin{equation}
0 < a_{\min} \leq a(\pmb x,\pmb y) \leq a_{\max} <\infty\,, \quad \pmb x \in \Omega\,, \quad \pmb y \in \Xi\,,  \notag
\end{equation}
for some positive real numbers $a_{\min}$ and $a_{\max}$. Then the parametric bilinear form is continuous and coercive on $H_0^1(\Omega) \times H_0^1(\Omega)$, i.e., for all $\pmb y \in \Xi$ and all $w,v \in H_0^1(\Omega)$ we have
\begin{displaymath}
b(\pmb y;v,v) \geq a_{\min}\ \|v\|_{H_0^1(\Omega)}^2 \quad \text{ and } \quad |b(\pmb y;w,v)| \leq a_{\max}\ \|w\|_{H_0^1(\Omega)}\ \|v\|_{H_0^1(\Omega)}\,.
\end{displaymath}
With the Lax--Milgram lemma we may then infer that for every $z \in H^{-1}(\Omega)$ and given $\pmb y \in \Xi$, there exists a unique solution to the parametric weak problem: find $u(\cdot,\pmb y) \in H_0^1(\Omega)$ such that \cref{eq:parametricweakproblem} holds.
Hence we obtain the following result, which can also be found, e.g., in \cite{CohenDeVoreSchwab} and \cite{Kuo2012QMCFEM}.

\begin{theorem}\label{theorem:theorem1}
For every $z \in H^{-1}(\Omega)$ and every $\pmb y \in \Xi$, there exists a unique solution $u(\cdot,\pmb y) \in H_0^1(\Omega)$ of the parametric weak problem \cref{eq:2.1} (or equivalently, \cref{eq:parametricweakproblem}), which satisfies
\begin{equation}
\|u(\cdot,\pmb y)\|_{H_0^1(\Omega)} \leq \frac{\| z\|_{H^{-1}(\Omega)}}{a_{\min}}\,.\notag
\end{equation}
In particular, because of \cref{c1} it holds for $z \in L^2(\Omega)$ that
\begin{equation}
\|u(\cdot,\pmb y)\|_{H_0^1(\Omega)} \leq \frac{c_1\|z\|_{L^{2}(\Omega)}}{a_{\min}}\,. \label{eq:2.5}
\end{equation}
\end{theorem}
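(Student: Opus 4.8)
The existence and uniqueness part is already essentially handed to us: immediately before the statement we verified that $b(\pmb y;\cdot,\cdot)$ is coercive with constant $a_{\min}$ and continuous with constant $a_{\max}$ on $H_0^1(\Omega)\times H_0^1(\Omega)$, uniformly in $\pmb y\in\Xi$, and that $z\mapsto\langle z,\cdot\rangle$ is a bounded linear functional on $H_0^1(\Omega)$ whenever $z\in H^{-1}(\Omega)$. So the plan is simply to invoke the Lax--Milgram lemma for each fixed $\pmb y\in\Xi$ to get a unique $u(\cdot,\pmb y)\in H_0^1(\Omega)$ solving \cref{eq:parametricweakproblem}; this is the same argument as in \cite{CohenDeVoreSchwab,Kuo2012QMCFEM}, and there is no real obstacle here.

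For the a priori bound I would test \cref{eq:parametricweakproblem} with $v=u(\cdot,\pmb y)$ itself. On the left-hand side coercivity gives $b(\pmb y;u(\cdot,\pmb y),u(\cdot,\pmb y))\ge a_{\min}\,\|u(\cdot,\pmb y)\|_{H_0^1(\Omega)}^2$, while on the right-hand side the duality pairing is bounded by $|\langle z,u(\cdot,\pmb y)\rangle|\le \|z\|_{H^{-1}(\Omega)}\,\|u(\cdot,\pmb y)\|_{H_0^1(\Omega)}$. Combining these two inequalities and (in the nontrivial case $u(\cdot,\pmb y)\neq 0$) dividing through by $\|u(\cdot,\pmb y)\|_{H_0^1(\Omega)}$ yields
\begin{displaymath}
\|u(\cdot,\pmb y)\|_{H_0^1(\Omega)}\le\frac{\|z\|_{H^{-1}(\Omega)}}{a_{\min}}\,,
\end{displaymath}
which is the first claimed estimate; the case $u(\cdot,\pmb y)=0$ is trivially consistent with it.

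Finally, the $L^2$ version \cref{eq:2.5} follows at once by restricting to $z\in L^2(\Omega)\subset H^{-1}(\Omega)$ and substituting the embedding bound \cref{c1}, namely $\|z\|_{H^{-1}(\Omega)}\le c_1\|z\|_{L^2(\Omega)}$, into the estimate just obtained. I expect the entire argument to be routine; the only point worth stating carefully is that all the constants ($a_{\min}$, $a_{\max}$, $c_1$) are independent of $\pmb y$, so the bound holds uniformly over $\Xi$, which is exactly what will be needed in the later error analysis.
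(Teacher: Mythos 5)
Your proposal is correct and follows exactly the route the paper takes: the paper derives existence and uniqueness from the Lax--Milgram lemma using the uniform coercivity and continuity of $b(\pmb y;\cdot,\cdot)$ established just before the theorem, and the a priori bound is the standard consequence of testing with $v=u(\cdot,\pmb y)$, with \cref{eq:2.5} following from the embedding \cref{c1}. Nothing is missing.
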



\section{The optimization problem}
\label{sec:3}

For the discussion of existence and uniqueness of solutions of the optimal control problem \cref{eq:1.1,eq:1.2,eq:1.3,eq:1.4}, we reformulate the problem to depend on $z$ only, a form often referred to as the \emph{reduced form of the problem}.

Due to \cref{c2} we can interpret the solution operator as a linear continuous operator with image in $L^2(\Omega)$, which leads to the following definition.

\begin{definition}\label{def:solutionoperator}
For arbitrary $\pmb y \in \Xi$ we call the unique mapping $S_{\pmb y}:L^2(\Omega) \to L^2(\Omega)$, which for every $\pmb y \in \Xi$ assigns to each $f \in L^2(\Omega)$ the unique solution $g \in L^2(\Omega)$ of the weak problem: find $g\in H_0^1(\Omega)$ such that
\begin{align*}
b(\pmb y;g,v) = \langle f,v \rangle \quad \forall v \in H_0^1(\Omega)\,.
\end{align*}
\end{definition}

Note that the solution operator $S_{\pmb y}$ depends on $\pmb y \in \Xi$ as indicated by the subscript. Further, $S_{\pmb y}$ is a self-adjoint operator, i.e., $S_{\pmb y}= S_{\pmb y}^*$, where $S_{\pmb y}^*$ is defined by $\langle S^*_{\pmb y} g,f \rangle = \langle g,S_{\pmb y}f \rangle$ $\forall f,g \in L^2(\Omega)$. The self-adjoint property holds since for all $f,g \in L^2(\Omega)$ we have $\langle S_{\pmb y}^* g, f\rangle = \langle g, S_{\pmb y}f\rangle = b(\pmb y; S_{\pmb y}g,S_{\pmb y}f) = \langle S_{\pmb y}g,f \rangle$. In the following we will omit the $*$ in $S^*_{\pmb y}$.

By \cref{def:solutionoperator} and \cref{eq:parametricweakproblem} it clearly holds that $u(\cdot,\pmb y) = S_{\pmb y}z$ for every $\pmb y \in \Xi$. Therefore we can write 
\begin{align*}
u(\cdot,\pmb y, z) := S_{\pmb y}z
\end{align*} 
as a function of $z$ and call it the \emph{state} corresponding to the control $z \in L^2(\Omega)$.  The optimal control problem then becomes a quadratic problem in the Hilbert space $L^2(\Omega)$: find
\begin{equation}
\min_{z\in \mathcal Z} J(z)\,, \quad J(z) := \frac{1}{2} \int_{\Xi} \|S_{\pmb y}z-u_0\|^2_{L^2(\Omega)}\ \mathrm d\pmb y + \frac{\alpha}{2} \|z\|^2_{L^2(\Omega)}\,. \label{eq:3.1}
\end{equation}


\subsection{Existence and uniqueness of solutions}
\label{subsection31}

Results on the existence of solutions for formulations of the optimization problem with stochastic controls, i.e., where it is assumed that the control $z$ is dependent on the parametric variable $\pmb y$, can be found, e.g., in \cite{ChenGhattas} and \cite{KunothSchwab2}. In \cite{KouriSurowiec} an existence result for solutions of a risk-averse PDE-constrained optimization problem is stated, where the objective is to minimize the conditional value-at-risk (CVaR).

\begin{theorem}
There exists a unique optimal solution $z^*$ of the problem \cref{eq:3.1}.
\end{theorem}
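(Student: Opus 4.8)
The plan is to treat \cref{eq:3.1} as a quadratic minimization problem over a nonempty, closed, bounded, convex set in the Hilbert space $L^2(\Omega)$, and to invoke the standard theory of such problems. First I would check that $J$ is well defined, i.e., that $\pmb y \mapsto \|S_{\pmb y}z-u_0\|_{L^2(\Omega)}^2$ is $\mu$-measurable and integrable for each fixed $z \in L^2(\Omega)$. Measurability follows because, for $\pmb y$ fixed, $S_{\pmb y}z$ is the limit as $s\to\infty$ of the solutions associated with the truncated coefficients $\bar a + \sum_{j=1}^s y_j\psi_j$ (which are uniformly elliptic for $s$ large by \cref{eq:1.10}), each of which depends continuously on $(y_1,\dots,y_s)$ via the continuous dependence of the weak solution on the coefficient in $L^\infty(\Omega)$; integrability is then immediate from \cref{theorem:theorem1} together with \cref{c1} and \cref{c2}, which give the uniform bound $\|S_{\pmb y}z\|_{L^2(\Omega)} \le (c_1 c_2/a_{\min})\,\|z\|_{L^2(\Omega)}$ for all $\pmb y \in \Xi$, so the integrand is bounded and $\mu$ is a probability measure.

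Next I would record the structural properties of $J$. Expanding the square gives $J(z)=\tfrac12\,\mathfrak a(z,z)-\ell(z)+\tfrac12\|u_0\|_{L^2(\Omega)}^2$ with the symmetric bilinear form $\mathfrak a(z,w):=\int_\Xi \langle S_{\pmb y}z,S_{\pmb y}w\rangle\,\mathrm d\pmb y+\alpha\langle z,w\rangle$ and the linear functional $\ell(w):=\int_\Xi\langle u_0,S_{\pmb y}w\rangle\,\mathrm d\pmb y$. Using the uniform bound above, $\mathfrak a$ is bounded on $L^2(\Omega)\times L^2(\Omega)$ and $\ell$ is bounded on $L^2(\Omega)$; moreover $\mathfrak a(z,z)\ge\alpha\|z\|_{L^2(\Omega)}^2$, so $\mathfrak a$ is coercive because $\alpha>0$. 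Equivalently, $J$ is continuous and strictly convex on $L^2(\Omega)$: the integral term is convex since it is an average over $\pmb y$ of the convex maps $z\mapsto\|S_{\pmb y}z-u_0\|_{L^2(\Omega)}^2$ (composition of the affine map $z\mapsto S_{\pmb y}z-u_0$ with the convex squared norm, using linearity of $S_{\pmb y}$), and the Tikhonov term $\tfrac{\alpha}{2}\|z\|_{L^2(\Omega)}^2$ is strictly convex.

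With these properties established, existence and uniqueness follow from the Lions--Stampacchia theorem applied to $\mathfrak a$, $\ell$ and $\mathcal Z$: there is a unique $z^*\in\mathcal Z$ minimizing $J$ over $\mathcal Z$. Equivalently, one runs the direct method: a minimizing sequence in the bounded set $\mathcal Z$ has a weakly convergent subsequence whose limit lies in $\mathcal Z$ since $\mathcal Z$ is closed and convex, hence weakly closed; weak lower semicontinuity of the convex continuous functional $J$ shows this limit is a minimizer, and strict convexity excludes a second one. I expect the only point needing genuine care to be the well-definedness and measurability of the parametric integrals defining $J$, $\mathfrak a$ and $\ell$; once the uniform a priori bound of \cref{theorem:theorem1} is in hand, the remainder is a routine application of convex optimization in Hilbert space.
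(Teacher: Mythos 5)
Your proposal is correct, and its primary route is genuinely different from the paper's. The paper runs the direct method: it restricts to the sublevel set $\widetilde{\mathcal Z}=\mathcal Z\cap\{\|z\|_{L^2(\Omega)}^2\le\frac{2}{\alpha}J(z_0)\}$, extracts a weakly convergent subsequence from a minimizing sequence, uses weak lower semicontinuity of $z\mapsto\|S_{\pmb y}z-u_0\|_{L^2(\Omega)}^2$ \emph{pointwise in} $\pmb y$, and then passes the $\liminf$ through the integral via Fatou's lemma; uniqueness comes from strict convexity. Your main argument instead exhibits $J$ as $\frac12\mathfrak a(z,z)-\ell(z)+\mathrm{const}$ with $\mathfrak a$ bounded, symmetric and coercive (with constant $\alpha$) and invokes Lions--Stampacchia. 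What this buys: you get existence and uniqueness in one stroke without needing boundedness of $\mathcal Z$ or the Fatou step (weak lower semicontinuity is applied to $J$ as a whole, which is legitimate once you know $J$ is convex and strongly continuous, as your boundedness estimates show), and the variational-inequality characterization of Section 3.2 falls out for free. You also address the measurability and integrability of $\pmb y\mapsto\|S_{\pmb y}z-u_0\|_{L^2(\Omega)}^2$, which the paper leaves implicit. Your secondary "equivalently, one runs the direct method" remark is essentially the paper's proof. Both arguments are sound; the paper's is more elementary in that it avoids naming the Lions--Stampacchia machinery, while yours makes the quadratic structure explicit and is slightly more robust (e.g.\ it covers the unconstrained case $\mathcal Z=L^2(\Omega)$ used later in Section 4.1 without modification).
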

\begin{proof}
By assumption \cref{eq:1.7} there exists a $z_0 \in \mathcal Z$. For any $z \in \mathcal Z$ satisfying $\|z\|^2_{L^2(\Omega)} > \frac{2}{\alpha} J(z_0)$ it holds that
\begin{displaymath}
J(z) = \frac{1}{2} \int_{\Xi} \|S_{\pmb y}z - u_0\|^2_{L^2(\Omega)}\, \mathrm d\pmb y + \frac{\alpha}{2} \|z\|^2_{L^2(\Omega)} \geq \frac{\alpha}{2} \|z\|^2_{L^2(\Omega)} > J(z_0)\,.
\end{displaymath}
Hence, to find the optimal control $z^*$, we can restrict to the set $\widetilde{\mathcal Z} := \mathcal Z \cap \{ z \in L^2(\Omega) : \|z\|^2_{L^2(\Omega)} \leq \frac{2}{\alpha} J(z_0)\}$.
As $J(z) \geq 0$, the infimum $\widetilde J := \inf_{z \in \widetilde{\mathcal Z}} J(z)$ exists. Hence there exists a sequence $(z_i)_i \subset \widetilde{\mathcal Z}$ such that $J(z_i) \to \widetilde J$ as $i \to \infty$.
Since $\widetilde{\mathcal Z}$ is bounded, closed and convex it is weakly sequentially compact. Therefore there exists a subsequence $(z_{i_k})_k$, which converges weakly to $z^* \in \widetilde{\mathcal Z}$, i.e., $\langle z_{i_k},v \rangle \to \langle z^*,v \rangle$ $\forall v \in L^2(\Omega)$ as $k \to \infty$.
Since $\|S_{\pmb y}z-u_0\|^2_{L^2(\Omega)}$ as a function of $z$ is convex and continuous it is weakly lower semicontinuous. In consequence we have
\begin{displaymath}
\|S_{\pmb y}z^*-u_0\|^2_{L^2(\Omega)} \leq \liminf_{k \to \infty} \|S_{\pmb y}z_{i_k}-u_0\|^2_{L^2(\Omega)}\,.
\end{displaymath}
It follows that
\begin{align*}
J(z^*) &= \frac{1}{2} \int_{\Xi} \|S_{\pmb y}z^*-u_0\|^2_{L^2(\Omega)}\, \mathrm d\pmb y + \frac{\alpha}{2} \|z^*\|^2_{L^2(\Omega)}\\ 
&\leq \frac{1}{2}  \int_{\Xi} \liminf_{k \to \infty} \|S_{\pmb y}z_{i_k}-u_0\|^2_{L^2(\Omega)}\, \mathrm d\pmb y + \liminf_{k \to \infty} \frac{\alpha}{2} \|{z_{i_k}}\|^2_{L^2(\Omega)}\\
&\leq \liminf_{k \to \infty} J(z_{i_k}) = \widetilde J\,,
\end{align*}
where the last step follows by Fatou's lemma.
As $\widetilde J$ is the infimum of all possible values $J(z)$ and $z^* \in \widetilde{\mathcal Z}$, it follows that $J(z^*) = \widetilde J$ and hence $z^*$ is an optimal control. The uniqueness follows from the strict convexity of $J$. 
\end{proof}


\subsection{Optimality conditions}
\label{subsection32}

From standard optimization theory for convex $J$, we know that $z^*$ solves \cref{eq:3.1} if and only if the representer $J'$ of the Fr\'{e}chet derivative of $J$ satisfies the variational inequality $\langle J'(z^*),z-z^*\rangle \geq 0$ $\forall z \in \mathcal Z$.
It can be shown that
\begin{align}
J'(z) = \int_{\Xi} S_{\pmb y}(S_{\pmb y}z - u_0)\, \mathrm d\pmb y + \alpha z\,.\label{eq:gradient}
\end{align}
In the following we call $J'(z)$ the gradient of $J(z)$.

\begin{definition}\label{def:adjointstate}
For every $\pmb y \in \Xi$ and every $z \in \mathcal Z$, with $u(\cdot,\pmb y, z) = S_{\pmb y} z$ we call $q(\cdot,\pmb y,z) := S_{\pmb y}(S_{\pmb y}z-u_0) = S_{\pmb y}(u(\cdot,\pmb y ,z) - u_0)  \in L^2(\Omega)$ the adjoint state corresponding to the control $z$ and the state $u(\cdot, \pmb y,z)$.
\end{definition}
Note that $q(\cdot,\pmb y,z) \in L^2(\Omega)$ is by \cref{def:adjointstate}  the unique solution of the adjoint parametric weak problem: find $q(\cdot,\pmb y,z) \in H_0^1(\Omega)$ such that
\begin{align}
b(\pmb y;q(\cdot,\pmb y,z),w) = \langle (u(\cdot,\pmb y,z)-u_0),w \rangle \quad \forall w \in H_0^1(\Omega)\,,\label{eq:adjointparametricweakproblem}
\end{align}
where $u(\cdot,\pmb y,z)$ is the unique solution of 
\begin{align}
b(\pmb y; u(\cdot,\pmb y,z),v) = \langle z,v\rangle \quad \forall v \in H_0^1(\Omega)\,.\label{eq:33b}
\end{align}

The following result is a corollary to \cref{theorem:theorem1}.
\begin{corollary}\label{coro}
For every $z \in L^{2}(\Omega)$ and every $\pmb y \in \Xi$, there exists a unique solution $q(\cdot,\pmb y,z) \in H_0^1(\Omega)$ of the parametric weak problem \cref{eq:adjointparametricweakproblem}, which satisfies
\begin{equation}
\|q(\cdot,\pmb y,z)\|_{H_0^1(\Omega)} \leq \frac{c_1\|u(\cdot,\pmb y,z) - u_0\|_{L^2(\Omega)}}{a_{\min}} \leq C_q \left(\| z\|_{L^{2}(\Omega)} + \| u_0\|_{L^{2}(\Omega)}\right)\,, \label{coro:2.4}
\end{equation}
where $C_q := \max\left(\frac{c_1}{a_{\min}}, \frac{c_1^2c_2}{a^2_{\min}}\right)$ and $c_1,c_2> 0$ are the embedding constants in \cref{c1,c2}.
\end{corollary}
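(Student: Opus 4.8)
The plan is to recognize that the adjoint weak problem \cref{eq:adjointparametricweakproblem} has exactly the same structure as the primal weak problem \cref{eq:parametricweakproblem}: it asks for $q(\cdot,\pmb y,z)\in H_0^1(\Omega)$ with $b(\pmb y;q(\cdot,\pmb y,z),w)=\langle u(\cdot,\pmb y,z)-u_0,w\rangle$ for all $w\in H_0^1(\Omega)$, which is \cref{eq:parametricweakproblem} with the source term $z$ replaced by $f:=u(\cdot,\pmb y,z)-u_0$. Since $u(\cdot,\pmb y,z)=S_{\pmb y}z\in H_0^1(\Omega)\subset L^2(\Omega)$ and $u_0\in L^2(\Omega)$ by \cref{eq:1.7}, we have $f\in L^2(\Omega)\subset H^{-1}(\Omega)$, so \cref{theorem:theorem1} applies verbatim and yields the existence and uniqueness of $q(\cdot,\pmb y,z)\in H_0^1(\Omega)$ together with the a priori bound $\|q(\cdot,\pmb y,z)\|_{H_0^1(\Omega)}\le \|f\|_{H^{-1}(\Omega)}/a_{\min}\le c_1\|f\|_{L^2(\Omega)}/a_{\min}$, where the last step uses the embedding \cref{c1}. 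This is precisely the first inequality claimed.

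For the second inequality I would estimate $\|f\|_{L^2(\Omega)}=\|u(\cdot,\pmb y,z)-u_0\|_{L^2(\Omega)}$ by the triangle inequality as $\|u(\cdot,\pmb y,z)\|_{L^2(\Omega)}+\|u_0\|_{L^2(\Omega)}$, then use the embedding \cref{c2} to pass from the $L^2$-norm of the state to its $H_0^1$-norm, and finally invoke the a priori estimate \cref{eq:2.5} from \cref{theorem:theorem1} to bound $\|u(\cdot,\pmb y,z)\|_{H_0^1(\Omega)}\le c_1\|z\|_{L^2(\Omega)}/a_{\min}$. Chaining these gives $\|u(\cdot,\pmb y,z)-u_0\|_{L^2(\Omega)}\le \tfrac{c_1c_2}{a_{\min}}\|z\|_{L^2(\Omega)}+\|u_0\|_{L^2(\Omega)}$, and multiplying through by $c_1/a_{\min}$ produces the upper bound $\tfrac{c_1^2c_2}{a_{\min}^2}\|z\|_{L^2(\Omega)}+\tfrac{c_1}{a_{\min}}\|u_0\|_{L^2(\Omega)}$. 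Bounding each of the two coefficients by their maximum $C_q=\max(c_1/a_{\min},\,c_1^2c_2/a_{\min}^2)$ and collecting terms yields the stated estimate $C_q(\|z\|_{L^2(\Omega)}+\|u_0\|_{L^2(\Omega)})$.

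There is no real obstacle here; the only points requiring a little care are that $u(\cdot,\pmb y,z)$ is only the weak (not classical) solution, so one must work with its $H_0^1$-norm and use the continuous embedding $E_2$ rather than any pointwise estimate, and that the bound is required to hold uniformly in $\pmb y\in\Xi$ — which it does, since $a_{\min}$ and the embedding constants $c_1,c_2$ are independent of $\pmb y$. Everything else is the triangle inequality and substitution.
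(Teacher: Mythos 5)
Your proof is correct and follows exactly the route the paper intends: the corollary is stated without proof as a direct consequence of \cref{theorem:theorem1} applied with source term $u(\cdot,\pmb y,z)-u_0\in L^2(\Omega)$, and your chaining of the triangle inequality, the embedding \cref{c2}, and the a priori bound \cref{eq:2.5} reproduces the constant $C_q=\max\left(\frac{c_1}{a_{\min}},\frac{c_1^2c_2}{a_{\min}^2}\right)$ precisely as defined.
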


As a consequence of \cref{eq:gradient} and \cref{def:adjointstate} we get \begin{align}
J'(z) = \int_{\Xi} q(\cdot,\pmb y,z)\, \mathrm d\pmb y + \alpha z\,,\label{eq:gradient3}
\end{align}
which directly leads to the following result. 
\begin{lemma} \label{theorem:3.4}
A control $z^* \in \mathcal Z$ solves \cref{eq:3.1} if and only if
\begin{align} 
\left\langle \int_{\Xi} q(\cdot,\pmb y,z^*)\, \mathrm d\pmb y + \alpha z^*, z - z^*\right\rangle \geq 0 \quad \forall z \in \mathcal Z\,, \label{eq:gradient2}
\end{align}
where $q(\cdot,\pmb y,z^*)$ is the adjoint state corresponding to $z^*$.
\end{lemma}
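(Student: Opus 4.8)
The plan is simply to combine the first-order optimality condition for convex problems with the gradient representation \cref{eq:gradient3}. Since $J$ is convex and Fréchet differentiable on $L^2(\Omega)$ and $\mathcal Z$ is closed and convex, $z^*\in\mathcal Z$ solves \cref{eq:3.1} if and only if the variational inequality $\langle J'(z^*),z-z^*\rangle\ge 0$ holds for all $z\in\mathcal Z$; substituting \cref{eq:gradient3} then yields \cref{eq:gradient2} verbatim. Thus the only pieces that actually require work are (i) the gradient formula \cref{eq:gradient} (equivalently \cref{eq:gradient3}), which was only asserted above, and (ii) the justification that the variational inequality is both necessary and sufficient for optimality.

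For (i), I would exploit that $J$ is a quadratic functional on the Hilbert space $L^2(\Omega)$. Writing $Tz:=\int_\Xi S_{\pmb y}(S_{\pmb y}z)\,\mathrm d\pmb y$ and $\ell:=\int_\Xi S_{\pmb y}u_0\,\mathrm d\pmb y$, the $L^2(\Omega)$-valued Bochner integrals are well defined because $\|S_{\pmb y}\|_{L^2(\Omega)\to L^2(\Omega)}\le c_1c_2/a_{\min}$ uniformly in $\pmb y\in\Xi$, which follows from \cref{eq:2.5} together with \cref{c2}. Then $J(z)=\tfrac12\langle Tz,z\rangle-\langle\ell,z\rangle+\tfrac{\alpha}{2}\|z\|_{L^2(\Omega)}^2+\mathrm{const}$, and since each $S_{\pmb y}$ is self-adjoint, $T$ is a bounded self-adjoint positive operator; a quadratic functional of this shape is Fréchet differentiable with derivative $J'(z)=Tz-\ell+\alpha z=\int_\Xi S_{\pmb y}(S_{\pmb y}z-u_0)\,\mathrm d\pmb y+\alpha z$ (the remainder after the first-order Taylor expansion is exactly $\tfrac12\langle Th,h\rangle+\tfrac{\alpha}{2}\|h\|_{L^2(\Omega)}^2=O(\|h\|_{L^2(\Omega)}^2)$). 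Rewriting $S_{\pmb y}(S_{\pmb y}z-u_0)=q(\cdot,\pmb y,z)$ via \cref{def:adjointstate} gives \cref{eq:gradient3}, and positivity of $T$ together with $\alpha>0$ gives the (strict) convexity of $J$.

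For (ii), sufficiency of \cref{eq:gradient2} is the standard convexity estimate $J(z)\ge J(z^*)+\langle J'(z^*),z-z^*\rangle\ge J(z^*)$ for all $z\in\mathcal Z$; necessity is a feasible-direction argument, using that $z^*+t(z-z^*)\in\mathcal Z$ for $t\in[0,1]$ by convexity of $\mathcal Z$, that $t\mapsto J(z^*+t(z-z^*))$ is minimized at $t=0$, and that its right derivative there equals $\langle J'(z^*),z-z^*\rangle$, which is therefore $\ge 0$.

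I do not foresee any real obstacle: the genuinely nontrivial work has already been carried out in \cref{theorem:theorem1} and in the derivation of \cref{eq:gradient,eq:gradient3}. The only point needing a modicum of care is the well-definedness of, and the interchange of the inner product with, the integrals over $\Xi$ defining $T$ and $\ell$ — measurability of $\pmb y\mapsto S_{\pmb y}z$ and domination by the uniform operator bound — which is routine.
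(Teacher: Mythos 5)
Your proposal is correct and follows essentially the same route as the paper, which obtains \cref{eq:gradient2} by combining the standard first-order optimality condition for convex problems with the gradient representation \cref{eq:gradient3}; you have merely filled in the details (the quadratic expansion of $J$ giving \cref{eq:gradient}, and the necessity/sufficiency of the variational inequality) that the paper leaves as "standard" and "it can be shown". No gaps.
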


The variational inequality $\langle J'(z^*),z-z^*\rangle \geq 0$ $\forall z \in \mathcal Z$ holds if and only if there exist a.e.~nonnegative functions $\mu_a,\mu_b \in L^2(\Omega)$ such that $J'(z^*) - \mu_a + \mu_b = 0$ and that the complementary constraints $(z^* - z_{\min})\mu_a  = (z_{\max} - z^*)\mu_b = 0$ are satisfied a.e.~in $\Omega$, cf. \cite[Theorem 2.29]{Troeltzsch}. Thus we obtain the following KKT-system.

\begin{theorem}\label{theorem:variational inequality}
A control $z^* \in L^2(\Omega)$ is the unique minimizer of \cref{eq:3.1} if and only if it satisfies the following KKT-system:
\begin{align}\label{eq:KKT}
\begin{cases}
- \nabla \cdot (a(\pmb x,\pmb y) \nabla u(\pmb x,\pmb y,z^*)) = z^*(\pmb x)  &\pmb x \in\Omega\,, \quad \pmb y \in \Xi\,,\\
u(\pmb x,\pmb y,z^*) = 0 &\pmb x \in \partial \Omega\,, \quad \!\!\! \pmb y \in \Xi\,,\\[.7em]
- \nabla \cdot (a(\pmb x,\pmb y) \nabla q(\pmb x,\pmb y,z^*)) = u(\pmb x,\pmb y,z^*) - u_0(\pmb x) &\pmb x \in\Omega\,, \quad \pmb y \in \Xi\,,\\
q(\pmb x,\pmb y,z^*) = 0 &\pmb x \in \partial \Omega\,, \quad \!\!\! \pmb y \in \Xi\,,\\[.7em]
\displaystyle \int_{\Xi} q(\pmb x,\pmb y,z^*)\, \mathrm d\pmb y + \alpha z^*(\pmb x) - \mu_a(\pmb x) + \mu_b(\pmb x) = 0 \quad &\pmb x \in\Omega\,,\\[.7em]
z_{\min}(\pmb x) \leq z^*(\pmb x) \leq z_{\max}(\pmb x)\,, \quad \mu_a(\pmb x) \geq 0\,, \quad \mu_b(\pmb x) \geq 0\,, &\pmb x \in\Omega\,,\\
(z^*(\pmb x) - z_{\min}(\pmb x))\mu_a(\pmb x) = (z_{\max}(\pmb x) - z^*(\pmb x))\mu_b(\pmb x) = 0\,, &\pmb x \in\Omega\,.
\end{cases}
\end{align}
\end{theorem}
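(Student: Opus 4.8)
The plan is to obtain \cref{eq:KKT} by stitching together three ingredients that are already in place: the first-order characterization of the minimizer in \cref{theorem:3.4}, the pointwise (multiplier) reformulation of the variational inequality recorded just above, and the weak formulations defining the state and the adjoint state. I would begin by invoking \cref{theorem:3.4}: since $J$ is strictly convex (as used in the existence-and-uniqueness proof), a feasible $z^*\in\mathcal Z$ is the unique minimizer of \cref{eq:3.1} if and only if the variational inequality \cref{eq:gradient2} holds, and by \cref{eq:gradient3} this inequality is exactly $\langle J'(z^*),z-z^*\rangle\ge 0$ for all $z\in\mathcal Z$ with $J'(z^*)=\int_\Xi q(\cdot,\pmb y,z^*)\,\mathrm d\pmb y+\alpha z^*$.

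Next I would apply the equivalence stated immediately before the theorem (cf.\ \cite[Theorem 2.29]{Troeltzsch}): the variational inequality holds if and only if there are a.e.\ nonnegative $\mu_a,\mu_b\in L^2(\Omega)$ with $J'(z^*)-\mu_a+\mu_b=0$ and $(z^*-z_{\min})\mu_a=(z_{\max}-z^*)\mu_b=0$ a.e.\ in $\Omega$. Substituting $J'(z^*)$ from \cref{eq:gradient3} turns the identity $J'(z^*)-\mu_a+\mu_b=0$ into the fifth line of \cref{eq:KKT}, while the feasibility $z^*\in\mathcal Z$ together with $\mu_a,\mu_b\ge 0$ and the complementarity relations give the last two lines.

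The remaining four lines of \cref{eq:KKT} are just the strong forms of the defining weak problems. By \cref{def:solutionoperator}, $u(\cdot,\pmb y,z^*)=S_{\pmb y}z^*$ is the unique $H_0^1(\Omega)$-solution of \cref{eq:33b}, i.e.\ of \cref{eq:1.2,eq:1.3} with right-hand side $z^*$; by \cref{def:adjointstate}, $q(\cdot,\pmb y,z^*)=S_{\pmb y}(u(\cdot,\pmb y,z^*)-u_0)$ is the unique $H_0^1(\Omega)$-solution of \cref{eq:adjointparametricweakproblem}, whose strong form is the third and fourth lines of \cref{eq:KKT}; existence, uniqueness and the accompanying estimates are supplied by \cref{theorem:theorem1} and \cref{coro}. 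For the reverse implication, if $z^*$ together with some $u,q,\mu_a,\mu_b$ satisfies \cref{eq:KKT}, then uniqueness of the (adjoint) weak problems forces $u=u(\cdot,\pmb y,z^*)$ and $q=q(\cdot,\pmb y,z^*)$, the fifth line reads $J'(z^*)=\mu_a-\mu_b$, and the sign and complementarity conditions are precisely the hypotheses of the cited equivalence, which returns \cref{eq:gradient2}; \cref{theorem:3.4} then identifies $z^*$ as the unique minimizer.

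Since the genuinely analytic content — the equivalence between the global variational inequality and the pointwise complementarity conditions, and the fact that the multipliers may be taken in $L^2(\Omega)$ — is imported wholesale from \cite[Theorem 2.29]{Troeltzsch}, there is no serious obstacle left; the only care required is the routine passage between the weak and the strong (distributional) forms of the two elliptic PDEs and the bookkeeping of the identity \cref{eq:gradient3}. If one preferred a self-contained argument, the one nontrivial point would be deriving the pointwise conditions from \cref{eq:gradient2} by testing with controls that agree with $z^*$ outside a set of arbitrarily small measure, and checking that $\mu_a=\max(J'(z^*),0)$ and $\mu_b=\max(-J'(z^*),0)$ is then an admissible choice.
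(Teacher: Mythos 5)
Your proposal is correct and follows essentially the same route as the paper: the paper's ``proof'' is the paragraph immediately preceding the theorem, which combines \cref{theorem:3.4} with the equivalence from \cite[Theorem 2.29]{Troeltzsch} between the variational inequality and the existence of nonnegative multipliers $\mu_a,\mu_b\in L^2(\Omega)$ satisfying the complementarity conditions, exactly as you do. Your additional remarks on the strong forms of the state and adjoint equations and on the reverse implication are consistent with (and slightly more explicit than) the paper's presentation.
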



\section{Gradient descent algorithms}
We present a gradient descent algorithm to solve the optimal control problem for the case without control constraints ($\mathcal Z = L^2(\Omega)$) in \cref{sec:Gradient descent} and a projected variant of the algorithm for the problem with control constraints in \cref{subsection41}.

\subsection{Gradient descent}
\label{sec:Gradient descent}

Consider problem \cref{eq:3.1} with $z_{\min} = -\infty$ and $z_{\max} = \infty$, i.e., $\mathcal Z = L^2(\Omega)$. Then $\mu_{a} = 0 = \mu_{b}$ and $z^*$ is unique minimizer of \cref{eq:3.1} if and only if $J'(z^*) = 0$. 
To find the minimizer $z^*$ of $J$ we use the gradient descent method, for which the descent direction is given by the negative gradient $-J'$, see \cref{alg:gradient descent}.

\begin{algorithm}[t]
\caption{Gradient descent}
\label{alg:gradient descent}
Input: starting value $z \in L^2(\Omega)$
\begin{algorithmic}[1]
\WHILE{$\| J'(z)\|_{L^2(\Omega)} >$TOL}
\STATE\label{444}{find step size $\eta$ using \cref{alg:Armijo}}
\STATE{set $z := z - \eta J'(z)$}
\ENDWHILE
\end{algorithmic}
\end{algorithm}
\begin{algorithm}[t]
\caption{Armijo rule}
\label{alg:Armijo}
Input: current $z$, parameters $\beta,\gamma \in (0,1)$\\
Output: step size $\eta > 0$
\begin{algorithmic}[1]
\STATE{set $\eta := 1$}
\WHILE{$J(z - \eta J'(z))- J(z) > - \eta \gamma \|J'(z)\|^2_{L^2(\Omega)}$}
\STATE{set $\eta := \beta \eta$}
\ENDWHILE
\end{algorithmic}
\end{algorithm}

Note that in every iteration in \cref{alg:gradient descent} several evaluations of $q$ are required in order to approximate the infinite-dimensional integral $\int_{\Xi}\ q(\cdot,\pmb y,z)\ \mathrm d\pmb y$ in the gradient of $J$, see \cref{eq:gradient2}. Further, for each evaluation of $q$ one needs to solve the state PDE and the adjoint PDE.

\begin{theorem}\label{theorem:convergenceGradDesc2}
For arbitrary starting values $z_0 \in L^2(\Omega)$ and $z_{\min} = -\infty$ and $z_{\max} = \infty$, the sequence $\{z_i\}$ generated by \cref{alg:gradient descent} satisfies $J'(z_i) \to 0$ as $i \to \infty$ and the sequence converges to the unique solution $z^*$ of \cref{eq:3.1}.
\end{theorem}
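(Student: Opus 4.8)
The plan is to recognize that \cref{eq:3.1} is a quadratic minimization problem on the Hilbert space $L^2(\Omega)$ and then to run the standard convergence argument for gradient descent with Armijo backtracking applied to a strongly convex functional with Lipschitz continuous gradient; the only genuine work is to verify the abstract hypotheses in the present setting. Using the self-adjointness of $S_{\pmb y}$ and \cref{eq:gradient}, introduce the auxiliary operator $\mathcal A := \int_{\Xi} S_{\pmb y}^2\,\mathrm d\pmb y$ and the element $b := \int_{\Xi} S_{\pmb y}u_0\,\mathrm d\pmb y$. By \cref{eq:2.5} together with \cref{c2} one has $\|S_{\pmb y}\|_{L^2(\Omega)\to L^2(\Omega)} \le c_1c_2/a_{\min} =: L_S$ uniformly in $\pmb y \in \Xi$, so these Bochner integrals are well defined, $\mathcal A$ is bounded, self-adjoint and positive semidefinite with $\|\mathcal A\| \le L_S^2$, and
\begin{align*}
J(z) = \tfrac12\langle (\mathcal A + \alpha\,\mathrm{Id})z,z\rangle - \langle b,z\rangle + \tfrac12\|u_0\|_{L^2(\Omega)}^2, \qquad J'(z) = (\mathcal A + \alpha\,\mathrm{Id})z - b.
\end{align*}
Hence $J'$ is Lipschitz continuous with constant $L := L_S^2 + \alpha$, and, since $\langle(\mathcal A + \alpha\,\mathrm{Id})v,v\rangle \ge \alpha\|v\|_{L^2(\Omega)}^2$, the functional $J$ is $\alpha$-strongly convex.

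Next I would prove the descent estimate: for every $z$ with $g := J'(z) \ne 0$ and every $\eta > 0$,
\begin{align*}
J(z - \eta g) - J(z) = -\int_0^{\eta}\langle J'(z-tg),g\rangle\,\mathrm dt \le -\eta\|g\|_{L^2(\Omega)}^2 + \tfrac{L\eta^2}{2}\|g\|_{L^2(\Omega)}^2,
\end{align*}
which follows from the fundamental theorem of calculus applied to $t\mapsto J(z-tg)$ together with the Lipschitz bound on $J'$. Consequently the Armijo test in \cref{alg:Armijo} is satisfied as soon as $\eta \le 2(1-\gamma)/L$, so the backtracking loop terminates after finitely many steps and returns a step size $\eta_i \ge \bar\eta := \min\{1,\,2\beta(1-\gamma)/L\} > 0$, with $\bar\eta$ independent of $i$.

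Plugging the accepted step size into the Armijo inequality gives $J(z_{i+1}) \le J(z_i) - \gamma\bar\eta\,\|J'(z_i)\|_{L^2(\Omega)}^2$ for all $i$, and since $J \ge 0$ this telescopes to $\gamma\bar\eta\sum_{i\ge 0}\|J'(z_i)\|_{L^2(\Omega)}^2 \le J(z_0) < \infty$; in particular $\|J'(z_i)\|_{L^2(\Omega)} \to 0$, which is the first assertion. (If $J'(z_i)=0$ for some $i$, then $z_i=z^*$ already and there is nothing to prove.) For convergence of the iterates, use $J'(z^*)=0$ and the quadratic form of the gradient to write $J'(z_i) = (\mathcal A + \alpha\,\mathrm{Id})(z_i - z^*)$; combining the coercivity bound $\langle(\mathcal A + \alpha\,\mathrm{Id})v,v\rangle \ge \alpha\|v\|_{L^2(\Omega)}^2$ with the Cauchy--Schwarz inequality yields $\alpha\|z_i - z^*\|_{L^2(\Omega)} \le \|J'(z_i)\|_{L^2(\Omega)} \to 0$, so $z_i \to z^*$ in $L^2(\Omega)$. (One in fact obtains linear convergence, although this is not needed for the statement.)

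The main obstacle is the quantitative bookkeeping around the line search — establishing the descent estimate and, from it, the uniform lower bound $\bar\eta>0$ on the Armijo step sizes — since without it one could not exclude the step sizes collapsing to zero and thereby stalling the decrease of $J$. Once the quadratic, $\alpha$-strongly convex, $C^{1,1}$ structure of \cref{eq:3.1} has been made explicit, the remaining steps are a routine transcription of the finite-dimensional convex-optimization argument to the Hilbert-space setting.
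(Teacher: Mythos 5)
Your proposal is correct. For the second assertion (convergence of the iterates) you arrive at exactly the paper's key inequality $\alpha\|z_i-z^*\|_{L^2(\Omega)} \le \|J'(z_i)\|_{L^2(\Omega)}$, obtained in both cases from $J'(z^*)=0$ together with the coercivity $\langle (\mathcal A+\alpha\,\mathrm{Id})v,v\rangle\ge\alpha\|v\|^2_{L^2(\Omega)}$ (the paper writes this as $\int_\Xi\|S_{\pmb y}(z_i-z^*)\|^2\,\mathrm d\pmb y+\alpha\|z_i-z^*\|^2$), so that part is the same argument. Where you genuinely diverge is the first assertion, $J'(z_i)\to 0$: the paper simply cites \cite[Theorem 2.2]{HinzePinnauUlbrich}, whereas you reprove it from scratch by exhibiting the quadratic $C^{1,1}$ structure of $J$, deriving the descent estimate $J(z-\eta g)-J(z)\le -\eta(1-\tfrac{L\eta}{2})\|g\|^2$, deducing the uniform lower bound $\bar\eta=\min\{1,2\beta(1-\gamma)/L\}$ on the accepted Armijo steps, and telescoping. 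This makes the proof self-contained and identifies explicitly why the line search cannot stall, at the cost of some bookkeeping (the uniform operator bound $\|S_{\pmb y}\|_{\mathcal L(L^2(\Omega))}\le c_1c_2/a_{\min}$ from \cref{eq:2.5} and \cref{c2}, and the measurability needed for the Bochner integrals defining $\mathcal A$ and $b$, which you assert at the same level of informality as the paper does). Your parenthetical observation that the argument actually yields linear convergence is also a correct by-product not stated in the paper.
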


\begin{proof}
The first part is shown in \cite[Theorem 2.2]{HinzePinnauUlbrich}. Now let $z^*$ be the unique solution of \cref{eq:3.1}. Then
\begin{align*}
\alpha \| z_i - z^* \|_{L^2(\Omega)}^2 &\leq \int_{\Xi} \|S_{\pmb y}(z_i-z^*)\|_{L^2(\Omega)}^2\, \mathrm d\pmb y + \alpha \|z_i - z^*\|_{L^2(\Omega)}^2\\ 
&= \left\langle z_i-z^*, \int_{\Xi} (S_{\pmb y}S_{\pmb y} + \alpha I) (z_i-z^*)\, \mathrm d\pmb y \right\rangle\\
&= \left\langle z_i-z^*, \int_{\Xi} \left((S_{\pmb y}S_{\pmb y} + \alpha I)z_i - S_{\pmb y}u_0 \right) \mathrm d\pmb y \right\rangle\\
&= \left\langle z_i-z^*, J'(z_i) \right\rangle
\leq \|z_i-z^*\|_{L^2(\Omega)} \|J'(z_i)\|_{L^2(\Omega)}\,,
\end{align*}
where we used Fubini's Theorem in the first equality and then $\int_\Xi q(\cdot,\pmb y,z^*)\, \mathrm d\pmb y = -\alpha z^*$.
Hence we obtain
\begin{align*}
\|z_i-z^*\|_{L^2(\Omega)} \leq \frac{1}{\alpha} \|J'(z_i)\|_{L^2(\Omega)} \to 0\quad \text{as } i \to \infty\,,
\end{align*}
and thus $z_i \to z^*$ in $L^2(\Omega)$. Further, by continuity of $J$ it follows that $J(z_i) \to J(z^*)$.
\end{proof}


\subsection{Projected gradient descent}
\label{subsection41}

Consider now problem \cref{eq:3.1} with
\begin{align*}
-\infty < z_{\min} < z_{\max} < \infty\quad \text{a.e.~in}\ \Omega \,,
\end{align*} 
i.e., $\mathcal Z \subsetneq L^2(\Omega)$.
The application of \cref{alg:gradient descent} to feasible $z_i$ might lead to infeasibility of $z_i - \eta J'(z_i)$ even for small stepsizes $\eta >0$. On the other hand, considering only those $\eta >0$ for which $z_i - \eta J'(z_i)$ stays feasible is not viable since this might result in very small step sizes $\eta$.

To incorporate these constraints we use the projection $P_{\mathcal Z}$ onto $\mathcal Z$ given by
\begin{equation}\label{eq:projection}
P_{\mathcal Z}(z)(\pmb x) = P_{[z_{\min}(\pmb x),z_{\max}(\pmb x)]}(z(\pmb x)) = \max(z_{\min}(\pmb x),\min(z(\pmb x),z_{\max}(\pmb x)))\,,
\end{equation}
and perform a line search along the projected path $\{P_{\mathcal{Z}}(z_i - \eta J'(z_i)):\ \eta > 0\}$.
One can show (\cite[Lemma 1.10]{HinzePinnauUlbrich}) that the variational inequality \cref{eq:gradient2} is equivalent to $z^* - P_{\mathcal Z}(z^* - J'(z^*))  = 0$.
This leads to \cref{alg:projected gradient descent}, which is justified by \cref{theorem:projgraddesc}.
\begin{algorithm}[t]
\caption{Projected gradient descent}
\label{alg:projected gradient descent}
Input: feasible starting value $z \in \mathcal Z$
\begin{algorithmic}[1]
\WHILE{$\| z - P_{\mathcal Z}(z - J'(z)) \|_{L^2(\Omega)} >$TOL}
\STATE{find step size $\eta$ using \cref{alg:projected Armijo}}
\STATE{set $z := P_{\mathcal Z}(z - \eta J'(z))$}
\ENDWHILE
\end{algorithmic}
\end{algorithm}
\begin{algorithm}[t]
\caption{Projected Armijo rule}
\label{alg:projected Armijo}
Input: current $z$, parameters $\beta,\gamma \in (0,1)$\\
Output: step size $\eta > 0$
\begin{algorithmic}[1]
\STATE{set $\eta := 1$}
\WHILE{$J(P_{\mathcal Z}(z - \eta J'(z)))- J(z) > -\frac{\gamma}{\eta} \|z-P_{\mathcal Z}(z - \eta J'(z))\|^2_{L^2(\Omega)}$}
\STATE{set $\eta := \beta \eta$}
\ENDWHILE
\end{algorithmic}
\end{algorithm}

\begin{theorem}\label{theorem:projgraddesc}
For feasible starting values $z_0 \in \mathcal Z$, the sequence $\{z_i\}$ generated by \cref{alg:projected gradient descent} satisfies
\begin{align*}
\lim_{i \to \infty} \|z_i - P_{\mathcal Z}(z_i - J'(z_i))\|_{L^2(\Omega)} = 0\,,
\end{align*}
where $P_{\mathcal Z}$ is defined by \cref{eq:projection}. Moreover, the sequence $\{z_i\}$ converges to the unique solution $z^*$ of \cref{eq:3.1}.
\end{theorem}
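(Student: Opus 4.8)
The plan is to establish the two assertions separately, following the template of the proof of \cref{theorem:convergenceGradDesc2}. The first assertion, that $\|z_i-P_{\mathcal Z}(z_i-J'(z_i))\|_{L^2(\Omega)}\to 0$, is precisely the convergence statement for the projected gradient method with an Armijo line search carried out along the projection arc $\{P_{\mathcal Z}(z_i-\eta J'(z_i)):\eta>0\}$, and I would obtain it by invoking the corresponding result in \cite{HinzePinnauUlbrich}. To apply that result one has to check its hypotheses for the reduced functional $J$ on the Hilbert space $L^2(\Omega)$: namely that $J$ is bounded below (indeed $J\ge 0$), Fr\'echet differentiable with gradient given by \cref{eq:gradient}, and that this gradient is Lipschitz continuous on $L^2(\Omega)$. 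The last point follows from $J'(v)-J'(w)=\int_\Xi S_{\pmb y}S_{\pmb y}(v-w)\,\mathrm d\pmb y+\alpha(v-w)$ together with the bound $\|S_{\pmb y}\|_{L^2(\Omega)\to L^2(\Omega)}\le c_1c_2/a_{\min}$ uniformly in $\pmb y$, which is a direct consequence of \cref{c2} and \cref{eq:2.5}; I would write $L:=\alpha+(c_1c_2/a_{\min})^2$ for the resulting Lipschitz constant of $J'$.

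The second and main assertion amounts to a quantitative stability estimate showing that the vanishing of the residual $r(z_i):=z_i-P_{\mathcal Z}(z_i-J'(z_i))$ forces $z_i\to z^*$. The two ingredients are, first, the strong monotonicity of the gradient: from \cref{eq:gradient}, the self-adjointness of $S_{\pmb y}$ and the identity $\langle S_{\pmb y}S_{\pmb y}v,v\rangle=\|S_{\pmb y}v\|_{L^2(\Omega)}^2\ge 0$ one gets
\[
\langle J'(v)-J'(w),\,v-w\rangle\ \ge\ \alpha\,\|v-w\|_{L^2(\Omega)}^2\qquad\forall\,v,w\in L^2(\Omega)\,;
\]
and second, the optimality characterization \cref{eq:gradient2} of $z^*$ (equivalently $z^*=P_{\mathcal Z}(z^*-J'(z^*))$). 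I would then combine two variational inequalities: the variational characterization of the metric projection applied at the point $v=z_i-J'(z_i)$, whose projection onto $\mathcal Z$ is $z_i-r(z_i)$, tested against $z^*\in\mathcal Z$, which yields $\langle J'(z_i)-r(z_i),\,z^*-z_i+r(z_i)\rangle\ge 0$; and \cref{eq:gradient2} for $z^*$, tested against the feasible point $z_i-r(z_i)\in\mathcal Z$, which yields $\langle J'(z^*),\,z_i-r(z_i)-z^*\rangle\ge 0$. Adding the two and rearranging isolates $\langle J'(z_i)-J'(z^*),\,z_i-z^*\rangle$ on one side, which is bounded below by $\alpha\|z_i-z^*\|_{L^2(\Omega)}^2$ via strong monotonicity; the other side, after discarding a nonpositive term $-\|r(z_i)\|_{L^2(\Omega)}^2$ and applying the Cauchy--Schwarz inequality together with the Lipschitz bound for $J'$, is bounded above by $(1+L)\,\|r(z_i)\|_{L^2(\Omega)}\,\|z_i-z^*\|_{L^2(\Omega)}$. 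Dividing through gives
\[
\|z_i-z^*\|_{L^2(\Omega)}\ \le\ \frac{1+L}{\alpha}\,\|z_i-P_{\mathcal Z}(z_i-J'(z_i))\|_{L^2(\Omega)}\,,
\]
and the first assertion shows the right-hand side tends to $0$; continuity of $J$ then also yields $J(z_i)\to J(z^*)$.

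I expect the genuinely delicate part to be the first assertion, i.e., verifying carefully that the projected Armijo rule in \cref{alg:projected Armijo} is well defined --- that the backtracking loop terminates after finitely many steps at every iterate --- and that the abstract convergence theorem of \cite{HinzePinnauUlbrich} applies verbatim to the present $J$ on $L^2(\Omega)$; once the residual is known to vanish, the stability estimate above is a short and essentially routine consequence of the strong convexity of $J$.
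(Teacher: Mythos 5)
Your proposal is correct, and for the main (second) assertion it takes a genuinely different route from the paper. Both proofs delegate the first assertion, $\|z_i-P_{\mathcal Z}(z_i-J'(z_i))\|_{L^2(\Omega)}\to 0$, to \cite[Theorem 2.4]{HinzePinnauUlbrich}; your added verification of the hypotheses (boundedness below, Fr\'echet differentiability, and Lipschitz continuity of $J'$ with constant $L=\alpha+(c_1c_2/a_{\min})^2$) is accurate and a useful supplement. For the convergence $z_i\to z^*$, the paper argues qualitatively: it uses the Armijo decrease to claim that $z_i\to\widetilde z$ for some $\widetilde z\in\mathcal Z$, then passes to the limit in the residual using continuity of $J'$ and $P_{\mathcal Z}$ to conclude $\widetilde z=P_{\mathcal Z}(\widetilde z-J'(\widetilde z))$, hence $\widetilde z=z^*$. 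You instead prove the quantitative error bound $\|z_i-z^*\|_{L^2(\Omega)}\le\frac{1+L}{\alpha}\,\|z_i-P_{\mathcal Z}(z_i-J'(z_i))\|_{L^2(\Omega)}$ by adding the projection variational inequality at $z_i-J'(z_i)$ (tested with $z^*$) to the optimality condition \cref{eq:gradient2} (tested with the feasible point $P_{\mathcal Z}(z_i-J'(z_i))$), and invoking the strong monotonicity $\langle J'(v)-J'(w),v-w\rangle\ge\alpha\|v-w\|^2$; I checked the algebra and it is correct. This mirrors the paper's own strategy for the \emph{unconstrained} case in \cref{theorem:convergenceGradDesc2}, where the bound $\|z_i-z^*\|\le\alpha^{-1}\|J'(z_i)\|$ plays the same role. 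What your route buys is twofold: an explicit convergence rate for the iterates in terms of the computable residual, and independence from the intermediate claim that the sequence $\{z_i\}$ converges at all before its limit is identified --- a claim the paper extracts from $\frac{\gamma}{\eta_i}\|z_{i+1}-z_i\|^2\to 0$, which requires some care since vanishing successive differences do not by themselves make a sequence Cauchy. Your only remaining dependence on the black-box reference is the first assertion itself, which is the same dependence the paper has.
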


\begin{proof}
For the proof of the first result we refer to \cite[Theorem 2.4]{HinzePinnauUlbrich}. 
By construction $J(z_i)$ is monotonically decreasing in $i$ and $J(z) \geq 0$ for all $z \in L^2(\Omega)$. Thus we know $\lim_{i \to \infty} J(z_i) =  \widetilde J = \inf_{i \in \mathbb N} J(z_i)$. Together with the projected Armijo rule in \cref{alg:projected Armijo} this further implies
\begin{align*}
J(z_{i+1}) - J(z_{i}) \leq -\frac{\gamma}{\eta_i}\|z_i -P_{\mathcal Z}(z_i - \eta_i J'(z_i))\|_{L^2(\Omega)}^2 =  -\frac{\gamma}{\eta_i}\|z_{i} - z_{i+1}\|_{L^2(\Omega)}^2 \to 0\,,
\end{align*}
and thus $z_i \to \widetilde z$ in $L^2(\Omega)$ as $i \to \infty$, for some $\widetilde z \in \mathcal Z$. 
By continuity of $\|\cdot\|_{L^2(\Omega)}$, continuity of $J'(\cdot)$ and continuity of $P_{\mathcal Z}(\cdot)$ we know
\begin{align*}
\lim_{i \to \infty} \|z_i - P_{\mathcal Z}(z_i - J'(z_i)) \|_{L^2(\Omega)} &= \|\lim_{i \to \infty} z_i - P_{\mathcal Z}(\lim_{i \to \infty} z_i -  J'(\lim_{i \to \infty}z_i) )\|_{L^2(\Omega)}\\
&= \|\widetilde z - P_{\mathcal Z}(\widetilde z +  J'(\widetilde z)) \|_{L^2(\Omega)} = 0\,,
\end{align*}
which is equivalent to 
\begin{align*}
\widetilde z - P_{\mathcal Z}(\widetilde z - J'(\widetilde z)) = 0\,.
\end{align*}
Thus $\widetilde z$ satisfies the variational inequality \cref{eq:gradient2} and is the unique minimizer $z^*$ of \cref{eq:3.1}.
\end{proof}


\section{Discretization of the problem and error expansion}
\label{sec:5}

In the following we consider an approximation/discretization of problem \cref{eq:1.1,eq:1.2,eq:1.3,eq:1.4}. 
Given $s \in \mathbb N$ and $\pmb y \in \Xi$, we notice that truncating the sum in \cref{eq:1.9} after $s$ terms is the same as setting $y_j = 0$ for $j \geq s+1$.
For every $\pmb y \in \Xi$ we denote the unique solution of the parametric weak problem \cref{eq:33b} corresponding to the dimensionally truncated diffusion coefficient $a(\cdot,(y_1,y_2,\ldots,y_s,0,0,\ldots))$ by $u_s(\cdot ,\pmb y,z) := u(\cdot , (y_1,y_2,\ldots,y_s,0,0,\ldots),z)$. Similarly we write $q_s(\cdot,\pmb y,z) := q(\cdot , (y_1,y_2,\ldots,y_s,0,0,\ldots),z)$ for any $\pmb y \in \Xi$ for the unique solution of the adjoint parametric weak problem \cref{eq:adjointparametricweakproblem} corresponding to the dimensionally truncated diffusion coefficient and truncated right-hand side $u_s(\cdot,\pmb y,z)-u_0$.

We further assume that we have access only to a finite element discretization $u_{s,h}(\cdot,\pmb y,z)$ of the truncated solution to \cref{eq:33b}, to be defined precisely in \cref{section:FE discretization}, and we write $q_{s,h}(\cdot,\pmb y,z)$ for the truncated adjoint state corresponding to $u_{s,h}(\cdot,\pmb y,z)$.

By abuse of notation we also write $u_s(\cdot,\pmb y,z) = u_s(\cdot,\pmb y_{\{1:s\}},z) =S_{\pmb y_{\{1:s\}}}z$ and $q_s(\cdot,\pmb y,z) =q_s(\cdot,\pmb y_{\{1:s\}},z)$ in conjunction with $u_{s,h}(\cdot,\pmb y,z) = u_{s,h}(\cdot,\pmb y_{\{1:s\}},z) = S_{\pmb y_{\{1:s\}},h}z$ and $q_{s,h}(\cdot,\pmb y,z)= q_{s,h}(\cdot,\pmb y_{\{1:s\}},z)$ for $s$-dimensional $\pmb y_{\{1:s\}} \in \Xi_s:= \left[-\frac{1}{2},\frac{1}{2}\right]^s$. Here and in the following $\{1:s\}$ is a shorthand notation for the set $\{1,2,\ldots,s\}$ and $\pmb y_{\{1:s\}}$ denotes the variables $y_j$ with $j \in \{1:s\}$.

Finally we use an $n$-point quasi-Monte Carlo approximation for the integral over $\Xi_s$ leading to the following discretization of \cref{eq:3.1}
\begin{align}
\min_{z \in \mathcal Z}  J_{s,h,n}(z)\,, \quad J_{s,h,n}(z) := \frac{1}{2n} \sum_{i=1}^{n} \| S_{\pmb y^{(i)},h} z -u_0\|^2_{L^2(\Omega)} + \frac{\alpha}{2} \| z\|^2_{L^2(\Omega)}\,, \label{eq:QMCFEobjective}
\end{align}
for quadrature points $\pmb y^{(i)} \in \Xi_s$, $i \in \{1,\ldots,n\}$, to be defined precisely in \cref{subsection:QMC}.

In analogy to \cref{eq:gradient3} it follows that the gradient of $J_{s,h,n}$, i.e., the representer of the Fr\'{e}chet derivative of $J_{s,h,n}$ is given by
\begin{align*}
J'_{s,h,n}(z) = \frac{1}{n} \sum_{i=1}^n q_{s,h}(\cdot,\pmb y^{(i)},z) + \alpha z\,.
\end{align*}
Due to the positive weights of the quadrature rule, \cref{eq:QMCFEobjective} is still a convex minimization problem. Existence and uniqueness of the solution $z^{*}_{s,h,n}$ of \cref{eq:QMCFEobjective} follow by the previous arguments. Quasi-Monte Carlo methods are designed to have convergence rates superior to Monte Carlo methods. Other candidates for obtaining faster rates of convergence include, e.g., sparse grid methods, but the latter involve negative weights, meaning that the corresponding discretized optimization problem will be generally non-convex, see, e.g., \cite{SparseGrids}.

\begin{theorem}\label{theorem:theorem51}
Let  $z^*$ be the unique minimizer of \cref{eq:3.1} and let $z^{*}_{s,h,n}$ be the unique minimizer of \cref{eq:QMCFEobjective}. It holds that
\begin{align}\label{eq:convergence}
\|z^*-z^{*}_{s,h,n}\|_{L^2(\Omega)} \leq \frac{1}{\alpha} \left\| \int_{\Xi} q(\cdot,\pmb y,z^*)\, \mathrm d\pmb y - \frac{1}{n} \sum_{i = 1}^{n} q_{s,h}(\cdot,\pmb y^{(i)},z^*)\ \right\|_{L^2(\Omega)} \,,
\end{align}
for quadrature points $\pmb y^{(i)} \in \left[-\frac{1}{2},\frac{1}{2}\right]^s$, $i \in \{1,\ldots,n\}$.
\end{theorem}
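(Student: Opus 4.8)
The plan is to combine the first-order optimality conditions for the continuous and discretized problems with the strong monotonicity of the discretized reduced gradient; the argument closely parallels the stability estimate in the proof of \cref{theorem:convergenceGradDesc2}. Recall from \cref{subsection32} that, since $J$ is convex, $z^*$ solves \cref{eq:3.1} if and only if $\langle J'(z^*),\,z-z^*\rangle\ge 0$ for all $z\in\mathcal Z$. Because the QMC weights are nonnegative, $J_{s,h,n}$ in \cref{eq:QMCFEobjective} is again a convex quadratic functional on $L^2(\Omega)$, so its unique minimizer $z^*_{s,h,n}$ is characterized by the analogous inequality $\langle J'_{s,h,n}(z^*_{s,h,n}),\,z-z^*_{s,h,n}\rangle\ge 0$ for all $z\in\mathcal Z$. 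As both $z^*$ and $z^*_{s,h,n}$ lie in $\mathcal Z$, I would test the first inequality with $z=z^*_{s,h,n}$, the second with $z=z^*$, add them, and then add and subtract $J'_{s,h,n}(z^*)$ to obtain
\begin{align*}
\langle J'_{s,h,n}(z^*_{s,h,n})-J'_{s,h,n}(z^*),\,z^*_{s,h,n}-z^*\rangle
\ \le\ \langle J'(z^*)-J'_{s,h,n}(z^*),\,z^*_{s,h,n}-z^*\rangle .
\end{align*}

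For the left-hand side I would use that $J_{s,h,n}$ is $\alpha$-strongly convex: exactly as in the proof of \cref{theorem:convergenceGradDesc2},
\begin{align*}
\langle J'_{s,h,n}(v)-J'_{s,h,n}(w),\,v-w\rangle=\frac1n\sum_{i=1}^n\|S_{\pmb y^{(i)},h}(v-w)\|_{L^2(\Omega)}^2+\alpha\|v-w\|_{L^2(\Omega)}^2\ge\alpha\|v-w\|_{L^2(\Omega)}^2 ,
\end{align*}
where I use that the finite element solution operator $S_{\pmb y^{(i)},h}$ is self-adjoint, inherited from $S_{\pmb y}$. Combining this with the previous display and applying the Cauchy--Schwarz inequality to the right-hand side gives $\alpha\|z^*_{s,h,n}-z^*\|_{L^2(\Omega)}^2\le\|J'(z^*)-J'_{s,h,n}(z^*)\|_{L^2(\Omega)}\,\|z^*_{s,h,n}-z^*\|_{L^2(\Omega)}$, hence, after dividing by $\|z^*_{s,h,n}-z^*\|_{L^2(\Omega)}$ (the claim being trivial when this vanishes),
\begin{align*}
\|z^*_{s,h,n}-z^*\|_{L^2(\Omega)}\ \le\ \frac1\alpha\,\|J'(z^*)-J'_{s,h,n}(z^*)\|_{L^2(\Omega)} .
\end{align*}

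Finally I would substitute $J'(z^*)=\int_{\Xi}q(\cdot,\pmb y,z^*)\,\mathrm d\pmb y+\alpha z^*$ from \cref{eq:gradient3} and $J'_{s,h,n}(z^*)=\frac1n\sum_{i=1}^n q_{s,h}(\cdot,\pmb y^{(i)},z^*)+\alpha z^*$; the $\alpha z^*$ terms cancel and \cref{eq:convergence} follows, for arbitrary quadrature points $\pmb y^{(i)}\in[-\tfrac12,\tfrac12]^s$ (no property of the QMC rule beyond nonnegativity of the weights enters here; controlling the size of the right-hand side is deferred to the later sections). I do not anticipate a genuine obstacle: the only points requiring a little care are confirming that \cref{eq:QMCFEobjective} is convex with $\alpha$-strongly monotone gradient, so that its variational inequality is both necessary and sufficient, and noting that the Galerkin solution operator $S_{\pmb y,h}$ inherits self-adjointness and positivity from $S_{\pmb y}$, which is precisely what keeps the quadratic part of $J_{s,h,n}$ convex and the estimate sharp in the parameter $\alpha$.
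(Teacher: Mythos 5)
Your proposal is correct and follows essentially the same route as the paper: both add the two variational inequalities, insert $J'_{s,h,n}(z^*)$ as an intermediate term, use the self-adjointness of $S_{\pmb y^{(i)},h}$ to identify the nonnegative quadratic term that yields $\alpha$-strong monotonicity, and finish with Cauchy--Schwarz and the cancellation of the $\alpha z^*$ terms. No gaps.
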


\begin{proof}
By the optimality of $z_{s,h,n}^*$ it holds for all $z \in \mathcal Z$ that 
$\langle  J'_{s,h,n}(z_{s,h,n}^{*}), z - z_{s,h,n}^{*} \rangle \geq 0$
and thus in particular $\langle  J'_{s,h,n}(z_{s,h,n}^{*}), z^* - z_{s,h,n}^{*} \rangle \geq 0$.
Similarly it holds for all $z \in \mathcal Z$ that 
$\langle  J'(z^{*}), z - z^{*} \rangle \geq 0$ and thus in particular $\langle -J'(z^*) , z^* - z_{s,h,n}^{*} \rangle \geq 0$.
Adding these inequalities leads to
\begin{align*}
\langle  J'_{s,h,n} (z_{s,h,n}^{*}) - J'(z^*), z^* -z_{s,h,n}^{*} \rangle \geq 0\,.
\end{align*}
Thus
\begin{align*}
\alpha \|z^* - z^{*}_{s,h,n} \|^2_{L^2(\Omega)} &\leq \alpha \| z^* - z^{*}_{s,h,n} \|^2_{L^2(\Omega)} + \left\langle  J'_{s,h,n}(z^{*}_{s,h,n}) - J'(z^*) , z^* - z^{*}_{s,h,n} \right\rangle\\
&=\left\langle J'_{s,h,n}(z^{*}_{s,h,n}) - \alpha z^{*}_{s,h,n} - J'(z^*) + \alpha z^*, z^* - z^*_{s,h,n} \right\rangle\\
&=\left\langle J'_{s,h,n}(z^{*}_{s,h,n}) - \alpha z^{*}_{s,h,n} - J'_{s,h,n}(z^*) + \alpha z^*, z^* - z^*_{s,h,n} \right\rangle\\
&\quad + \left\langle J'_{s,h,n}(z^*) - \alpha z^* - J'(z^*) + \alpha z^*, z^* - z^*_{s,h,n} \right\rangle\\
&=- \frac{1}{n} \sum_{i = 1}^{n} \|u_{s,h}(\cdot,\pmb y^{(i)},z_{s,h,n}^*) -u_{s,h}(\cdot,\pmb y^{(i)},z^*)\|^2_{L^2(\Omega)}\\
&\quad + \left\langle \frac{1}{n}\sum_{i=1}^nq_{s,h}(\cdot, \pmb y^{(i)},z^*) - \int_{\Xi} q(\cdot,\pmb y, z^*)\, \mathrm d\pmb y, z^* - z^*_{s,h,n} \right\rangle\\
&\leq  \left\|\frac{1}{n}\sum_{i=1}^n q_{s,h}(\cdot, \pmb y^{(i)},z^*) - \int_{\Xi} q(\cdot,\pmb y, z^*)\, \mathrm d\pmb y\right\|_{L^2(\Omega)} \| z^* - z^*_{s,h,n}\|_{L^2(\Omega)}\,,
\end{align*}
where in the fourth step we used the fact that
$J'_{s,h,n}(z^{*}_{s,h,n}) - \alpha z^{*}_{s,h,n} - J'_{s,h,n}(z^*) + \alpha z^*=\frac{1}{n} \sum_{i=1}^n (S_{\pmb y^{(i)},h}(S_{\pmb y^{(i)},h}z_{s,h,n}^*) - S_{\pmb y^{(i)},h}(S_{\pmb y^{(i)},h}z^*))$ together with the self-adjointness of the operator $S_{\pmb y^{(i)},h}$ in order to obtain
$
\frac{1}{n}\sum_{i=1}^n\langle S_{\pmb y^{(i)},h}(S_{\pmb y^{(i)},h}z_{s,h,n}^*) -S_{\pmb y^{(i)},h}(S_{\pmb y^{(i)},h}z^*), z^* - z^*_{s,h,n} \rangle
=-\frac{1}{n}\sum_{i=1}^n\| u_{s,h}(\cdot,\pmb y^{(i)},z_{s,h,n}^*) -u_{s,h}(\cdot,\pmb y^{(i)},z^*)\|_{L^2(\Omega)}\,.
$
The result then follows from $\alpha > 0$. 
\end{proof}
 
We can split up the error on the right-hand side in \cref{eq:convergence} into dimension truncation error, FE discretization error and QMC quadrature error as follows 
\begin{align}\label{eq:errorexpansion}
\int_{\Xi} q(\pmb x,\pmb y,z)\, \mathrm d\pmb y - \frac{1}{n} \sum_{i = 1}^{n} q_{s,h}(\pmb x,\pmb y^{(i)},z)
&= \underbrace{\int_{\Xi} \left(q(\pmb x,\pmb y,z) - q_s(\pmb x,\pmb y,z)\right)\, \mathrm d\pmb y}_{\text{truncation error}}\\ 
&+  \underbrace{\int_{\Xi_s} \left( q_s(\pmb x,\pmb y_{\{1:s\}},z) - q_{s,h}(\pmb x,\pmb y_{\{1:s\}},z)\right)\, \mathrm d \pmb y_{\{1:s\}}}_{\text{FE discretization error}}\notag\\
& + \underbrace{\int_{\Xi_s} q_{s,h}(\pmb x,\pmb y_{\{1:s\}},z)\,  \mathrm d \pmb y_{\{1:s\}} - \frac{1}{n} \sum_{i = 1}^{n} q_{s,h}(\pmb x,\pmb y^{(i)},z)}_{\text{QMC quadrature error}}.\notag
\end{align}
These errors can be controlled as shown in \cref{theorem:Truncationerror}, \cref{theorem:FEapprox} and \cref{theorem:QMCerror} below. The errors will be analysed separately in the following subsections.


\subsection{Truncation error}
\label{subsection51}

The proof of the following theorem is motivated by \cite{Gantner}. However, in this paper we do not apply a bounded linear functional to the solution of the PDE $q(\cdot,\pmb y,z)$. Moreover, the right-hand side $u(\cdot,\pmb y,z) - u_0$ of the adjoint PDE depends on the parametric variable $\pmb y$. Further, we do not need the explicit assumption that the fluctuation operators $B_j$ (see below) are small with respect to the mean field operator $A(\pmb 0)$ (see below), i.e., $\sum_{j \geq 1}\|A^{-1}(\pmb 0)B_j\|_{\mathcal L(H_0^1(\Omega))} \leq \kappa < 2$, cf. \cite[Assumption 1]{Gantner}. Here and in the following $\mathcal L(H_0^1(\Omega))$ denotes the space of all bounded linear operators in $H_0^1(\Omega)$. For these reasons the proof of our result differs significantly from the proof in \cite{Gantner}.

To state the proof of the subsequent theorem, we introduce the following notation:
for a multi-index $\pmb \nu = (\nu_j)_j$ with $\nu_j \in \{0,1,2,\ldots\}$, we denote its order $|\pmb \nu| := \sum_{j \geq1} \nu_j$ and its support as $\text{supp}(\pmb \nu) := \{j \geq 1: \nu_j \geq 1\}$. Furthermore, we denote the countable set of all finitely supported multi-indices by
\begin{displaymath}
\mathcal D := \{ \pmb \nu \in \mathbb N_0^{\infty} : \left|\text{supp}(\pmb \nu)\right| < \infty\}\,.
\end{displaymath} 
Let $b_j$ be defined by
\begin{align}
b_j := \frac{\|\psi_j\|_{L^{\infty}(\Omega)}}{a_{\min}}, \ j \geq 1\,.\label{b_j}
\end{align}
Then we write $\pmb b := (b_j)_{j\geq 1}$ and $\pmb b^{\pmb \nu} := \prod_{j \geq 1} b_j^{\nu_j}$.

\begin{theorem}[Truncation error]\label{theorem:Truncationerror}
Assume there exists $0<p<1$ such that
\begin{align*}
\sum_{j\geq 1}\|\psi_j\|_{L^{\infty}(\Omega)}^p < \infty\,.
\end{align*}
In addition let the $\psi_j$ be ordered such that $\|\psi_j\|_{L^{\infty}(\Omega)}$ are nonincreasing:
\begin{align*}
\|\psi_1\|_{L^{\infty}(\Omega)} \geq \|\psi_2\|_{L^{\infty}(\Omega)} \geq \|\psi_3\|_{L^{\infty}(\Omega)} \geq \cdots\,.
\end{align*}
Then for $z \in L^2(\Omega)$, for every $\pmb y \in \Xi$, and every $s \in \mathbb N$, the truncated adjoint solution $q_s(\cdot,\pmb y,z)$ satisfies
\begin{align}\label{eq:truncationerror1}
\left\| \int_{\Xi} \left(q(\cdot,\pmb y,z) - q_s(\cdot,\pmb y,z)\right)\, \mathrm d\pmb y \right\|_{L^2(\Omega)} \leq C \left(\|z\|_{L^2(\Omega)}+ \|u_0\|_{L^2(\Omega)}\right) s^{-\left(\frac{2}{p}-1\right)}\,,
\end{align}
for some constant $C > 0$ independent of $s$, $z$ and $u_0$.
\end{theorem}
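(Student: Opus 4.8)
The plan is to avoid any global Neumann series in the $B_j$ (and hence the smallness assumption $\sum_{j\ge1}\|A(\pmb 0)^{-1}B_j\|_{\mathcal L(H_0^1(\Omega))}<2$) and instead to control the truncation error by \emph{peeling off one coordinate at a time}, using crucially that $\int_{-1/2}^{1/2}y_j\,\mathrm dy_j=0$. First I would recast the two PDEs in operator form: for $\pmb y\in\Xi$ and $\ell\ge s$ set $\langle A(\pmb y)v,w\rangle=b(\pmb y;v,w)$, $\langle A_\ell(\pmb y)v,w\rangle=b((y_1,\dots,y_\ell,0,0,\dots);v,w)$, and introduce the fluctuation operators $B_j:H_0^1(\Omega)\to H^{-1}(\Omega)$ with $\langle B_jv,w\rangle=\int_\Omega\psi_j\,\nabla v\cdot\nabla w\,\mathrm d\pmb x$, so that $A_{\ell+1}(\pmb y)=A_\ell(\pmb y)+y_{\ell+1}B_{\ell+1}$, while $\|A_\ell(\pmb y)^{-1}\|_{H^{-1}(\Omega)\to H_0^1(\Omega)}\le a_{\min}^{-1}$ uniformly in $\ell,\pmb y$ (coercivity, as in \cref{theorem:theorem1}) and $\|B_j\|_{H_0^1(\Omega)\to H^{-1}(\Omega)}\le\|\psi_j\|_{L^\infty(\Omega)}=a_{\min}b_j$ by \cref{b_j}. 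Writing $u_\ell:=A_\ell(\pmb y)^{-1}E_1z$ and $q_\ell:=A_\ell(\pmb y)^{-1}E_1(u_\ell-u_0)$ (so that $q_s=q_s(\cdot,\pmb y,z)$), a standard resolvent perturbation estimate together with \cref{eq:1.10} gives $u_\ell\to u(\cdot,\pmb y,z)$ and hence $q_\ell\to q(\cdot,\pmb y,z)$ in $H_0^1(\Omega)$ as $\ell\to\infty$; since $p<1$ and the $b_j$ are nonincreasing we have $b_j\lesssim j^{-1/p}$, hence $\sum_jb_j<\infty$, so the telescoping series $q-q_s=\sum_{\ell\ge s}(q_{\ell+1}-q_\ell)$ converges absolutely and uniformly in $\pmb y$ and may be integrated termwise.

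Next I would estimate a single term $\int_\Xi(q_{\ell+1}-q_\ell)\,\mathrm d\pmb y$. Inserting the resolvent identities $u_{\ell+1}-u_\ell=-y_{\ell+1}A_{\ell+1}^{-1}B_{\ell+1}u_\ell$ and $A_{\ell+1}^{-1}-A_\ell^{-1}=-y_{\ell+1}A_\ell^{-1}B_{\ell+1}A_{\ell+1}^{-1}$ into $q_{\ell+1}-q_\ell=A_{\ell+1}^{-1}E_1(u_{\ell+1}-u_\ell)+(A_{\ell+1}^{-1}-A_\ell^{-1})E_1(u_\ell-u_0)$ factors out a prefactor $y_{\ell+1}$, say $q_{\ell+1}-q_\ell=-y_{\ell+1}G_{\ell+1}(\pmb y)$, where $G_{\ell+1}$ depends on $y_{\ell+1}$ only through the remaining factors $A_{\ell+1}^{-1}$. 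Expanding each of those once more by $A_{\ell+1}^{-1}=A_\ell^{-1}-y_{\ell+1}A_\ell^{-1}B_{\ell+1}A_{\ell+1}^{-1}$ yields a split $G_{\ell+1}(\pmb y)=G_{\ell+1}^0+y_{\ell+1}R_{\ell+1}(\pmb y)$ in which $G_{\ell+1}^0$ contains only the inverses $A_\ell^{-1}$ and therefore depends on $y_1,\dots,y_\ell$ but \emph{not} on $y_{\ell+1}$, whereas every term of the remainder $R_{\ell+1}$ carries at least two factors $B_{\ell+1}$. Together with the a priori bounds on $\|u_\ell\|_{H_0^1(\Omega)}$ and $\|u_\ell-u_0\|_{L^2(\Omega)}$ from \cref{theorem:theorem1} and \cref{coro} and the embedding constants of \cref{c1,c2}, this gives $\|R_{\ell+1}(\pmb y)\|_{L^2(\Omega)}\le C\,b_{\ell+1}^2\,(\|z\|_{L^2(\Omega)}+\|u_0\|_{L^2(\Omega)})$ for all $\pmb y\in\Xi$, with $C$ depending only on $a_{\min},c_1,c_2$; no smallness of the $B_j$ is needed because at every step the perturbation is measured relative to the always-coercive operator $A_\ell(\pmb y)$.

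To conclude, I would integrate over $\Xi$: since $G_{\ell+1}^0$ is independent of $y_{\ell+1}$ and $\int_{-1/2}^{1/2}y_{\ell+1}\,\mathrm dy_{\ell+1}=0$, Fubini's theorem kills the first-order piece, $\int_\Xi y_{\ell+1}G_{\ell+1}^0\,\mathrm d\pmb y=0$, leaving $\|\int_\Xi(q_{\ell+1}-q_\ell)\,\mathrm d\pmb y\|_{L^2(\Omega)}=\|\int_\Xi y_{\ell+1}^2R_{\ell+1}\,\mathrm d\pmb y\|_{L^2(\Omega)}\le C\,b_{\ell+1}^2\,(\|z\|_{L^2(\Omega)}+\|u_0\|_{L^2(\Omega)})$. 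Summing over $\ell\ge s$ yields a bound proportional to $\sum_{j>s}b_j^2$; since the $b_j$ are nonincreasing, $jb_j^p\le\sum_{i=1}^jb_i^p\le M:=\sum_{i\ge1}b_i^p<\infty$, hence $b_j^2\le M^{2/p}j^{-2/p}$, and as $2/p-1>1>0$ one gets $\sum_{j>s}b_j^2\le\frac{M^{2/p}}{2/p-1}\,s^{-(2/p-1)}$, which is \cref{eq:truncationerror1}.

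The hard part will be the middle step. One must realize that a one-shot use of the ``first moment vanishes'' observation only produces the weaker bound $(\sum_{j>s}b_j)^2$, and that the gain to $\sum_{j>s}b_j^2$ --- i.e.\ the exponent $2/p-1$ rather than $2(1/p-1)$ --- requires both peeling the coordinates off individually \emph{and} the extra resolvent expansion of $A_{\ell+1}^{-1}$, that expansion being exactly what renders the linear-in-$y_{\ell+1}$ contribution genuinely $y_{\ell+1}$-free and hence of zero integral. A secondary, bookkeeping difficulty, absent in the functional-valued setting of \cite{Gantner}, is that the adjoint right-hand side $u(\cdot,\pmb y,z)-u_0$ is itself $\pmb y$-dependent; this produces the additional term $(A_{\ell+1}^{-1}-A_\ell^{-1})E_1(u_\ell-u_0)$ in $q_{\ell+1}-q_\ell$, which is absorbed by the same two resolvent identities and, using \cref{coro}, contributes only the factor $\|z\|_{L^2(\Omega)}+\|u_0\|_{L^2(\Omega)}$.
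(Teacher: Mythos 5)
Your proposal is correct, and it takes a genuinely different route from the paper. The paper works with a \emph{global} Neumann series $A^{-1}=\sum_{k\ge 0}(-T_s)^kA_s^{-1}$ in the tail operator $T_s=\sum_{j\ge s+1}y_jA_s^{-1}B_j$; this forces a smallness threshold $s\ge s^*$ with $\sum_{j>s^*}b_j\le\tfrac12$ (with small $s$ handled by adjusting the constant), followed by a multinomial expansion in which only even multi-indices survive the integration, and a splitting into low-order terms (bounded by $\sum_{j>s}b_j^2$) and high-order terms (bounded by $(\sum_{j>s}b_j)^{k^*}$), the two being balanced by the choice $k^*=\lceil(2-p)/(1-p)\rceil$. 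You instead telescope one coordinate at a time, $q-q_s=\sum_{\ell\ge s}(q_{\ell+1}-q_\ell)$, and use the \emph{exact} resolvent identity $A_{\ell+1}^{-1}-A_\ell^{-1}=-y_{\ell+1}A_\ell^{-1}B_{\ell+1}A_{\ell+1}^{-1}$ twice: once to extract the prefactor $y_{\ell+1}$, and once more to make the linear-in-$y_{\ell+1}$ coefficient genuinely $y_{\ell+1}$-free so that its integral vanishes, leaving a remainder that is uniformly $O(b_{\ell+1}^2)$ because $\|A_{\ell+1}^{-1}\|\le a_{\min}^{-1}$ holds exactly rather than via a convergent series. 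Your identification of the key point is accurate: a single application of the zero-mean observation only yields $(\sum_{j>s}b_j)^2\sim s^{-2(1/p-1)}$, and the second resolvent expansion is what upgrades this to $\sum_{j>s}b_j^2\sim s^{-(2/p-1)}$. What your route buys is the elimination of both the smallness threshold and the combinatorial bookkeeping (no $k^*$, $\ell^*$, no counting of multi-index decompositions), and a bound valid for all $s$ with no case distinction; you also handle the $\pmb y$-dependent adjoint right-hand side correctly through the extra term $(A_{\ell+1}^{-1}-A_\ell^{-1})E_1(E_2u_\ell-u_0)$, which plays the role of the paper's ${\rm Term}_2$. What the paper's heavier machinery buys is a single expansion that is reused essentially verbatim for the state equation (Remark~\ref{remarkUTrunc}) and that stays closer to the template of~\cite{Gantner}. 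Only one cosmetic caveat: write the right-hand side of the adjoint problem as $E_1(E_2u_\ell-u_0)$ rather than $E_1(u_\ell-u_0)$, and note that the justification for termwise integration is the uniform absolute convergence $\sum_{\ell\ge s}\sup_{\pmb y}\|q_{\ell+1}-q_\ell\|_{H_0^1(\Omega)}\lesssim\sum_{\ell\ge s}b_{\ell+1}<\infty$, which you have available from \cref{eq:1.10}.
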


\begin{proof}
First we note that the result holds trivially without the factor $s^{-(2/p-1)}$ in the error estimate. This is true since \cref{coro} holds for the special case $\pmb y = (y_1,y_2,\ldots,y_s,0,0,\ldots)$, and so \cref{eq:truncationerror1} holds without the factor $s^{-(2/p-1)}$ for $C = 2c_2C_q$.
As a consequence, it is sufficient to prove the result for sufficiently large $s$, since it will then hold for all $s$, by making, if necessary, an obvious adjustment of the constant.
To this end we define $A = A(\pmb y): H_0^1(\Omega) \to H^{-1}(\Omega)$ by
\begin{align*}
\langle A(\pmb y)w,v\rangle := b(\pmb y; w,v) \quad \forall v,w \in H_0^1(\Omega)\,,
\end{align*}
and $A_s(\pmb y) := A((y_1,y_2,\ldots,y_s,0,0,\ldots))$. Both $A(\pmb y)$ and $A_s(\pmb y)$ are boundedly invertible operators from $H_0^1(\Omega)$ to $H^{-1}(\Omega)$ since
for all $\pmb y \in \Xi$ it holds for $w \in H_0^1(\Omega)$ and $z \in H^{-1}(\Omega)$ that
\begin{align*}
\|A(\pmb y) w\|_{H^{-1}(\Omega)} = \sup_{v \in H_0^1(\Omega)} \frac{\langle A(\pmb y) w, v \rangle}{\|v\|_{H_0^1(\Omega)}} = \sup_{v \in H_0^1(\Omega)} \frac{b(\pmb y;w,v)}{\|v\|_{H_0^1(\Omega)}} \leq a_{\max} \|w\|_{H_0^1(\Omega)}\,,
\end{align*}
together with a similar bound for $A_s(\pmb y)$; and in the reverse direction, from \cref{theorem:theorem1}
\begin{align}
\|A^{-1}(\pmb y) z\|_{H_0^1(\Omega)} \leq \frac{\|z\|_{H^{-1}(\Omega)}}{a_{\min}}\,, \qquad \|A_s^{-1}(\pmb y) z\|_{H_0^1(\Omega)} \leq \frac{\|z\|_{H^{-1}(\Omega)}}{a_{\min}}\,. \label{eq:Amin}
\end{align}
It follows that the solution operator $S_{\pmb y}: L^2(\Omega) \to L^2(\Omega)$ defined in \cref{def:solutionoperator} can be written as $S_{\pmb y} = E_2 A^{-1}(\pmb y)E_1$, where $E_1: L^2(\Omega) \to H^{-1}(\Omega)$ and $E_2: H_0^1(\Omega) \to L^2(\Omega)$ are the embedding operators defined in \cref{sec:2}.

We define $B_j : H^1_0(\Omega) \to H^{-1}(\Omega)$ by $\langle B_j v,w\rangle := \langle
\psi_j \nabla v,\nabla w\rangle$ for all $v,w\in H^1_0(\Omega)$ so that $A(\pmb y)-A_s(\pmb y) =
\sum_{j\ge s+1} y_j\, B_j$, and define also 
\begin{align*}
T_s(\pmb y) := \sum_{j\ge s+1} y_j\, A_s^{-1}(\pmb y) B_j = A_s^{-1}(\pmb y)(A(\pmb y)-A_s(\pmb y))\,.
\end{align*}
Then for all $v \in H_0^1(\Omega)$ we can write using \cref{eq:Amin}
\begin{align*}
\|A_s^{-1}(\pmb y)B_jv\|_{H_0^1(\Omega)} \leq \frac{\|B_jv\|_{H^{-1}(\Omega)}}{a_{\min}} 
&= \frac{1}{a_{\min}} \sup_{w \in H_0^1(\Omega)} \frac{|\langle B_jv,w\rangle|}{\|w\|_{H_0^1(\Omega)}}\\
&= \frac{1}{a_{\min}} \sup_{w \in H_0^1(\Omega)} \frac{|\langle \psi_j \nabla v, \nabla w\rangle |}{\|w\|_{H_0^1(\Omega)}}\\
&\leq \frac{\|v\|_{H_0^1(\Omega)}}{a_{\min}} \|\psi_j\|_{L^{\infty}(\Omega)} = \|v\|_{H_0^1(\Omega)} b_j\,.
\end{align*}
We conclude that
\begin{align}
\sup_{\pmb y \in \Xi} \|A_s^{-1}(\pmb y)B_j\|_{\mathcal L(H_0^1(\Omega))} \leq b_j\,, \label{apple}
\end{align}
and consequently 
\begin{align*}
\sup_{\pmb y \in \Xi} \|T_s(\pmb y)\|_{\mathcal L(H_0^1(\Omega))} \leq \frac{1}{2} \sum_{j\geq s+1} b_j\,.
\end{align*}

Let $s^*$ be such that $\sum_{j \geq s^*+1} b_j \leq \frac{1}{2}$, implying that $\sup_{\pmb y \in \Xi}\|T_s(\pmb y)\|_{\mathcal L(H_0^1(\Omega))} \leq \frac{1}{4}$. Then for all $s \geq s^*$, by the bounded invertibility of $A(\pmb y)$ and $A_s(\pmb y)$ for all $\pmb y \in \Xi$, we can write (omitting $\pmb y$ in the following) the inverse of $A$ in terms of the Neumann series, as

\[
  A^{-1} \,=\, (I+T_s)^{-1} A_s^{-1} \,=\, \sum_{k\ge 0} (-T_s)^k A_s^{-1}\,.
\]
So
\[
  A^{-1} - A_s^{-1} \,=\, \sum_{k\ge 1} (-T_s)^k A_s^{-1}.
\]
Now let $E:=E_1E_2$ be the embedding operator of $H_0^1(\Omega)$ in $H^{-1}(\Omega)$. Then we can write the adjoint solution $q$ as $q = A^{-1}E_1(E_2u-u_0)$ and 
we write
\begin{align*}
  q - q_s
  &\,=\, A^{-1}E_1(E_2u-u_0) - A_s^{-1}E_1(E_2u_s - u_0) \\
  &\,=\, A^{-1}E_1(E_2u_s-u_0) - A_s^{-1}E_1(E_2u_s - u_0) + A^{-1}E(u-u_s) \\
  &\,=\, (A^{-1}-A_s^{-1})E_1(E_2u_s-u_0) + A^{-1}E(A^{-1}-A_s^{-1})E_1z \\
  &\,=\, \sum_{k\ge 1} (-T_s)^k A_s^{-1}E_1(E_2u_s-u_0) + \sum_{\ell\ge 0} (-T_s)^\ell A_s^{-1} E\bigg(\sum_{k\ge 1} (-T_s)^k A_s^{-1} E_1z\bigg) \\
  &\,=\, \sum_{k\ge 1} (-1)^k\, T_s^k q_s + \sum_{\ell\ge 0} \sum_{k\ge 1} (-1)^{\ell+k}\, T_s^\ell A_s^{-1} E\, T_s^k u_s\,,
\end{align*}
Thus
\begin{align*}
  \int_\Xi (q - q_s) \,\rd\pmb y
  &\,=\, \sum_{k\ge 1} (-1)^k \underbrace{\int_\Xi T_s^k q_s \,\rd\pmb y}_{=:\;{\rm Integral}_1}
  + \sum_{\ell\ge 0} \sum_{k\ge 1} (-1)^{\ell+k} \underbrace{\int_\Xi T_s^\ell A_s^{-1} E\, T_s^k u_s \,\rd\pmb y}_{=:\;{\rm Integral}_2},
\end{align*}
giving
\begin{align*}
  &\bigg\|\int_\Xi (q - q_s) \,\rd\bsy\bigg\|_{L^2}
  \,\le\, c_2\,\bigg\|\int_\Xi (q - q_s) \,\rd\bsy\bigg\|_{H^1_0} \\
  &\qquad\,\le\, c_2 \bigg(
  \underbrace{\sum_{k\ge 1} \bigg\|\int_\Xi T_s^k q_s\,\rd\bsy\bigg\|_{H^1_0}}_{=:\,{\rm Term}_1}
  +
  \underbrace{\sum_{\ell\ge 0} \sum_{k\ge 1} \bigg\|\int_\Xi T_s^\ell A_s^{-1} E\, T_s^k u_s\,\rd\bsy\bigg\|_{H^1_0}}_{=:\,{\rm Term}_2}
  \bigg).
\end{align*}

Noting that $B_j$ is independent of~$\bsy$, we write
\[
  T_s^k \,=\, \bigg(\sum_{j\ge s+1} y_j\, A_s^{-1} B_j\bigg)^k
  \,=\, \sum_{\bseta\in\{s+1:\infty\}^k} \prod_{i=1}^k (y_{\eta_i}\, A_s^{-1} B_{\eta_i})\,,
\]
where we use the shorthand notation $\{s+1:\infty\}^k = \{s+1,s+2,\ldots,\infty\}^k$. 
\bigskip

First we consider ${\rm Integral}_1$. We have
\begin{align*}
  &{\rm Integral}_1 \,=\, \int_\Xi \sum_{\bseta\in\{s+1:\infty\}^k} \bigg(\prod_{i=1}^k (y_{\eta_i}\, A_s^{-1} B_{\eta_i})\bigg)
  q_s\,\rd\bsy \\
  &\,=\, \sum_{\bseta\in\{s+1:\infty\}^k}
  \bigg(\int_{\Xi_{s+}} \prod_{i=1}^k y_{\eta_i}\,\rd\bsy_{\{s+1:\infty\}}\bigg)
  \bigg(\int_{\Xi_s} \bigg(\prod_{i=1}^k (A_s^{-1} B_{\eta_i})\bigg)
  q_s\,\rd\bsy_{\{1:s\}} \bigg),
\end{align*}
where we were able to separate the integrals for $\bsy_{\{1:s\}}$ and $\bsy_{\{s+1:\infty\}} := (y_j)_{j\ge s+1}$ and $\Xi_{s+} := \{(y_j)_{j\geq s+1}\,:\,y_j \in \left[-\frac{1}{2},\frac{1}{2}\right],\, j\geq s+1\}$, an essential step of this proof. The integral over $\bsy_{\{s+1:\infty\}}$ is nonnegative due to the simple yet
crucial observation that
\begin{align} \label{eq:simple}
\int_{-\frac{1}{2}}^{\frac{1}{2}} y_j^{n}\,\rd y_j \,=\,
  \begin{cases}
  0  & \mbox{if $n$ is odd}, \\
  \frac{1}{2^{n}(n+1)}  & \mbox{if $n$ is even}.
  \end{cases}
\end{align}
Using \cref{coro:2.4} and \cref{apple}, the $H^1_0$-norm of the integral over $\bsy_{\{1:s\}}$ can be estimated by
\begin{align*}
  &\sup_{\bsy_{\{1:s\}}\in \Xi_s} \bigg\|\bigg(\prod_{i=1}^k (A_s^{-1} B_{\eta_i})\bigg)
   q_s\bigg\|_{H^1_0} \le\, C_1\, \prod_{i=1}^k b_{\eta_i}\,,
\end{align*}
with $C_1 := C_q\,\big(\|z\|_{L^2(\Omega)}+\|u_0\|_{L^2(\Omega)}\big)$.
Hence we obtain
\begin{align*}
  {\rm Term}_1
  &\,\le\, C_1\,
  \sum_{k\ge 1}
  \sum_{\bseta\in\{s+1:\infty\}^k}
  \bigg(\int_{\Xi_{s+}} \prod_{i=1}^k y_{\eta_i}\,\rd\bsy_{\{s+1:\infty\}}\bigg)\,
  \prod_{i=1}^k b_{\eta_i} \\
  &\,=\, C_1\,
  \sum_{k\ge 1}
\int_{\Xi_{s+}}
  \sum_{\bseta\in\{s+1:\infty\}^k}
  \bigg(\prod_{i = 1}^k y_{\eta_i} b_{\eta_i}\bigg)\,
  \mathrm d\bsy_{\{s+1:\infty\}} \\
  &\,=\, C_1\,
  \sum_{k\ge 1}
  \sum_{\substack{|\bsnu|=k \\ \nu_j=0\;\forall j\le s}} \binom{k}{\bsnu}
  \bigg(\prod_{j\ge s+1}\int_{-\frac{1}{2}}^{\frac{1}{2}} y_j^{\nu_j}\,\rd y_j\bigg)\,
  \prod_{j\ge s+1} b_j^{\nu_j} \\
  &\,\leq\, C_1\,
  \sum_{k\ge 2\;{\rm even}}
  \sum_{\substack{|\bsnu|=k \\ \nu_j=0\;\forall j\le s \\ \nu_j\;{\rm even}\;\forall j\ge s+1}} \binom{k}{\bsnu}
  \prod_{j\ge s+1} b_j^{\nu_j},
\end{align*}
where the second equality follows from the multinomial theorem with
$\bsnu\in\mathcal D$ a multi-index,
$\binom{k}{\bsnu} = k!/(\prod_{j\ge 1}\nu_j!)$, while the last inequality
follows from~\eqref{eq:simple}.

Now we split the sum into a sum over $k \geq k^*$ and the initial terms $2\leq k< k^*$, and estimate
\begin{align*}
  {\rm Term}_1
  &\,\le\, C_1\, \sum_{k\ge k^*\;{\rm even}} \bigg(\sum_{j\ge s+1} b_j\bigg)^k
  + C_1\, \sum_{2\le k < k^* \;{\rm even}} k! \bigg( \prod_{j\ge s+1} \Big(1 + \sum_{t=2}^k b_j^t\Big) -1\bigg) \\
  &\,\le\, C_1\, \cdot\, \frac{4}{3} \bigg(\sum_{j\ge s+1} b_j\bigg)^{k^*}
  + C_1\, k^*\, k^*!\, \cdot\, 2(\mathrm e-1) \sum_{j\ge s+1} b_j^2.
\end{align*}
The estimate for the sum over $k \geq k^*$ follows from the multinomial theorem, the geometric series formula, and that $\sum_{j\geq s+1} b_j \leq \frac{1}{2}$ for $s\geq s^*$. For the sum over $2 \leq k < k^*$ we use the fact that for $s \geq s^*$ we have $b_j \leq \frac{1}{2}$ for all $j \geq s+1$ and $\sum_{j \geq s+1} b_j^2 \leq \sum_{j\geq s+1} b_j \leq \frac{1}{2}$, and thus
\begin{align*}
 \prod_{j\ge s+1} \Big(1 + \sum_{t=2}^k b_j^t\Big) -1 &= \prod_{j \geq s+1} \Big( 1 + b_j^2 \frac{1-b_j^{k-1}}{1-b_j} \Big) - 1 \leq 
\prod_{j\geq s+1} \left(1+\frac{b_j^2}{1-b_j}\right) -1\\ &\leq \prod_{j \geq s+1} \left(1+2 b_j^2\right)-1 \leq \exp\left(2 \sum_{j\geq s+1} b_j^2\right) -1 \leq 2(\mathrm e-1)\sum_{j \geq s+1} b_j^2\,,
\end{align*}
since $\mathrm e^r-1 \leq  r(\mathrm e-1)$ for $r \in [0,1]$.

\bigskip
Next we estimate ${\rm Integral}_2$ in a similar way. We have
\begin{align*}
  {\rm Integral}_2 &= 
  \int_\Xi \! \sum_{\bsmu\in\{s+1:\infty\}^\ell} \!
  \bigg(\prod_{i=1}^\ell (y_{\mu_i}\, A_s^{-1} B_{\mu_i})\bigg)
   A_s^{-1} E \bigg(\sum_{\bseta\in\{s+1:\infty\}^k}
  \bigg(\prod_{i=1}^k (y_{\eta_i}\, A_s^{-1} B_{\eta_i})\bigg)\bigg) u_s\,\rd\bsy\\
  \,&=\,
  \sum_{\bsmu\in\{s+1:\infty\}^\ell} \sum_{\bseta\in\{s+1:\infty\}^k}
  \bigg(
  \int_{\Xi_{s+}} \bigg(\prod_{i=1}^\ell y_{\mu_i}\bigg)
  \bigg(\prod_{i=1}^k y_{\eta_i}\bigg)\,\rd\bsy_{\{s+1:\infty\}}\bigg) \\
  &\qquad\qquad\qquad\qquad \cdot
  \bigg(\int_{\Xi_s}
  \bigg(\prod_{i=1}^\ell (A_s^{-1} B_{\mu_i})\bigg)
   A_s^{-1} E \bigg(\prod_{i=1}^k (A_s^{-1} B_{\eta_i})\bigg) u_s\,\rd\bsy_{\{1:s\}}\bigg),
\end{align*}
where we again separated the integrals for $\bsy_{\{s+1:\infty\}}$ and $\bsy_{\{1:s\}}$. With \cref{eq:2.5} and \cref{apple} we have
\begin{align*}
  &\sup_{\bsy_{\{1:s\}}\in \Xi_s} \bigg\|\bigg(\prod_{i=1}^\ell (A_s^{-1} B_{\mu_i})\bigg)
   A_s^{-1}E \bigg(\prod_{i=1}^k (A_s^{-1} B_{\eta_i})\bigg) u_s\bigg\|_{H^1_0} \le\, C_2\, \bigg(\prod_{i=1}^\ell b_{\mu_i}\bigg)\bigg(\prod_{i=1}^k b_{\eta_i}\bigg)\,,
\end{align*}
with $C_2 := c_1c_2\frac{c_1\|z\|_{L^2(\Omega)}}{a_{\min}^2}$.
Hence we obtain
\begin{align*}
{\rm Term}_2&\,\le\, C_2 
  \sum_{\ell\ge 0} \sum_{k\ge 1}
  \sum_{\bsmu\in\{s+1:\infty\}^\ell} \sum_{\bseta\in\{s+1:\infty\}^k}
  \bigg(
  \int_{\Xi_{s+}} \bigg(\prod_{i=1}^\ell y_{\mu_i}\bigg)
  \bigg(\prod_{i=1}^k y_{\eta_i}\bigg)\rd\bsy_{\{s+1:\infty\}}\bigg)\\
  &\qquad\qquad\qquad\qquad\qquad \cdot\bigg(\prod_{i=1}^\ell b_{\mu_i}\bigg)\bigg(\prod_{i=1}^k b_{\eta_i}\bigg) \\
 &\,=\, C_2\,
  \sum_{\ell\ge 0} \sum_{k\ge 1}
  \int_{\Xi_{s+}}   
  \sum_{\bsmu\in\{s+1:\infty\}^\ell} \sum_{\bseta\in\{s+1:\infty\}^k}
  \bigg(\prod_{i=1}^\ell y_{\mu_i} b_{\mu_i} \bigg)
  \bigg(\prod_{i=1}^k y_{\eta_i} b_{\eta_i} \bigg)\,\rd\bsy_{\{s+1:\infty\}} \\
  &\,=\, C_2\,
  \sum_{\ell\ge 0} \sum_{k\ge 1}
  \sum_{\substack{|\bsm|=\ell \\ m_j=0\;\forall j\le s}}
  \sum_{\substack{|\bsnu|=k \\ \nu_j=0\;\forall j\le s}} \binom{\ell}{\bsm} \binom{k}{\bsnu}
  \bigg(\prod_{j\ge s+1} \int_{-\frac{1}{2}}^{\frac{1}{2}} y_j^{m_j+\nu_j}\,\rd y_j\bigg)
  \prod_{j\ge s+1} b_j^{m_j+\nu_j} \\
  &\,\leq\, C_2\,
  \sum_{\ell\ge 0} \sum_{k\ge 1}
  \sum_{\substack{|\bsm|=\ell \\ m_j=0\;\forall j\le s}}
  \sum_{\substack{|\bsnu|=k \\\nu_j=0\;\forall j\le s \\ m_j+\nu_j\;{\rm even}\;\forall j\ge s+1}}
  \binom{\ell}{\bsm} \binom{k}{\bsnu}
  \prod_{j\ge s+1} b_j^{m_j+\nu_j}.
\end{align*}
Now we split the sums and estimate them in a similar way to the sums in $\rm {Term}_1$:

\begin{align*}
  {\rm Term}_2
  &\,\le\, C_2\, \sum_{\ell\ge 0} \sum_{\substack{k\ge 1 \\ \ell\ge \ell^* \;{\rm or}\; k\ge k^* \;{\rm or\;both}}} 
  \bigg(\sum_{j\ge s+1} b_j\bigg)^{\ell+k} \\
  &\qquad + C_2\,
  \sum_{0\le \ell < \ell^*} \sum_{\substack{1\le k < k^* \\ \ell+k\;{\rm even}}} \ell!\, k!
\sum_{\substack{|\pmb w|=\ell+k \\ w_j=0\;\forall j\le s \\ w_j\;{\rm even}\;\forall j\ge s+1}}
  \bigg(\prod_{j\ge s+1} b_j^{w_j}\bigg)
  \bigg(\sum_{|\pmb m|=\ell} \sum_{\substack{|\pmb \nu|=k \\ \pmb m+\pmb \nu = \pmb w}} 1\bigg)\,.
\end{align*}

For $s \geq s^*$ and denoting $P:= \sum_{j\geq s+1} b_j\,\leq\, \frac{1}{2}$ we can simplify the first part as
\begin{align*}
\sum_{\ell\ge 0} \sum_{\substack{k\ge 1 \\ \ell\ge \ell^* \;{\rm or}\; k\ge k^* \;{\rm or\;both}}}P^{\ell+k}
&=\sum_{\ell\geq \ell^\ast}\sum_{k=1}^{k^\ast-1}P^{\ell+k}+\sum_{\ell=0}^{\ell^\ast-1}\sum_{k\geq k^\ast}P^{\ell+k}+\sum_{\ell\geq \ell^\ast}\sum_{k\geq k^\ast}P^{\ell+k}\\
&=\sum_{\ell\geq \ell^\ast}\frac{P^{\ell+1}(1-P^{k^\ast-1})}{1-P} +\sum_{\ell=0}^{\ell^\ast-1} \frac{P^{\ell+k^\ast}}{1-P}+\sum_{\ell\geq \ell^\ast} \frac{P^{\ell+k^\ast}}{1-P}\\
&=\frac{P^{\ell^\ast+1}}{(1-P)^2}+ \frac{P^{k^\ast}}{(1-P)^2}- \frac{P^{\ell^\ast+k^\ast}}{(1-P)^2} \leq 8P^{\min(k^*,\ell^*)}\,.
\end{align*}

For a given multi-index $\pmb w$ satisfying $|\pmb w|=\ell+k$, $w_j=0$ for
all $j\le s$, and $w_j$ is even for all $j\ge s+1$, we need to count the
number of pairs of multi-indices $\pmb m$ and $\pmb \nu$ such that
$|\pmb m|=\ell$, $|\pmb \nu|=k$, and $\pmb m+\pmb \nu=\pmb w$ to estimate the second part. Clearly we have $w_j\le
\ell+k$, $m_j\le\ell$, $\nu_j\le k$ for all $j$. So the number of ways to
write any component $w_j$ as a sum $m_j + \nu_j$ is at most
$\min(w_j+1,\ell+1,k+1) \le \min(\ell,k)+1$. Moreover, since all $w_j$ are
even, there are at most $(\ell+k)/2$ nonzero components of $w_j$.
Therefore
\begin{align*}
  \sum_{|\pmb m|=\ell} \sum_{\substack{|\pmb \nu|=k \\ \pmb m+\pmb \nu = \pmb w}} 1
  \,\le\, [\min(\ell,k)+1]^{(\ell+k)/2}.
\end{align*}

Thus we obtain
\begin{align*}
  {\rm Term}_2
  &\,\le\, C_2\, \sum_{\ell\ge 0} \sum_{\substack{k\ge 1 \\ \ell\ge \ell^* \;{\rm or}\; k\ge k^* \;{\rm or\;both}}}
  \bigg(\sum_{j\ge s+1} b_j\bigg)^{\ell+k} \\
  &\qquad + C_2\, \sum_{0\le \ell < \ell^*} \sum_{\substack{1\le k < k^* \\ \ell+k\;{\rm even}}}
  \ell!\, k!\,[\min(\ell,k)+1]^{(\ell+k)/2}
  \bigg( \prod_{j\ge s+1} \Big(1 + \sum_{t=2}^{\ell+k} b_j^t\Big) -1\bigg) \\
  &\,\le\, C_2\,\cdot\, 8 \left(\sum_{j\ge s+1} b_j\right)^{\min(\ell^*,k^*)} \\
  &\qquad + C_2\, \ell^*\,k^*\, \ell^*! \,k^*!
  [\min(\ell^*,k^*)+1]^{(\ell^*+k^*)/2}\,\cdot\,2(\mathrm{e}-1)\,\sum_{j\ge s+1} b_j^2.
\end{align*}

From \cite[Theorem 5.1]{Kuo2012QMCFEM} we know that
\begin{align*}
\sum_{j \geq s+1} b_j \leq \min{\left( \frac{1}{\frac{1}{p}-1},1 \right)} \left(\sum_{j \geq 1} b_j^p \right)^{\frac{1}{p}} s^{-\left(\frac{1}{p}-1\right)}\,.
\end{align*}
Further there holds
$
b_j^p \leq \frac{1}{j} \sum_{l = 1}^j b_l^p \leq \frac{1}{j} \sum_{l = 1}^{\infty} b_l^p
$
and therefore
\begin{align*}
\sum_{j\geq s+1} b_j^2 = \sum_{j\geq s+1} (b_j^p)^{\frac{2}{p}} \leq \sum_{j \geq s+1} \left(\frac{1}{j} \sum_{l =1}^{\infty} b_l^p\right)^{\frac{2}{p}} = \left(\sum_{j\geq s+1} j^{-\frac{2}{p}} \right) \left(\sum_{l = 1}^{\infty} b_l^p \right)^{\frac{2}{p}}\,,
\end{align*}
and
\begin{align*}
\sum_{j \geq s+1} j^{-\frac{2}{p}} \leq \int_s^{\infty} t^{-\frac{2}{p}}\, \mathrm dt = \frac{1}{\frac{2}{p}-1}  s^{-\frac{2}{p}+1} \leq s^{-\left(\frac{2}{p}-1\right)}\,,
\end{align*}
for $0<p< 1$ as desired.

To balance the two terms within $\rm Term_1$ and the two terms within $\rm Term_2$, we now choose $\ell^* = k^* =  \lceil(2-p)/(1-p)\rceil$. We see that $\rm Term_1$ and $\rm Term_2$ are then of the order $s^{-(2/p-1)}$, which is what we aimed to prove.
\end{proof}

\begin{remark}\label{remarkUTrunc}
By the same analysis as for $\rm Term_1$ in the proof of \cref{theorem:Truncationerror} with $q_s$ replaced by $u_s$, we get the following
\begin{align*}
\left\|\int_{\Xi}\left(u(\cdot,\pmb y,z)-u_s(\cdot,\pmb y,z)\right)\,\mathrm d\pmb y\,\right\|_{L^2(\Omega)}
&\leq c_2 \sum_{k\geq1} \left\|\int_{\Xi} T_s^k u_s(\cdot,\pmb y,z)\,\mathrm d\pmb y\right\|_{H_0^1(\Omega)} \\
&\leq \tilde{C}_1 \Bigg(\frac{4}{3} \Big( \sum_{j\geq s+1} b_j\Big)^{k^*} + k^*\, k^*!\,\cdot\,2(\mathrm e - 1)\sum_{j \geq s+1} b_j^2\Bigg)\\
&\leq \tilde{C}\,s^{-\left(\frac{2}{p}-1\right)}\,,
\end{align*}
where $\tilde{C}_1 = c_2 \frac{\|z\|_{H^{-1}(\Omega)}}{a_{\min}}$, and some constant $\tilde{C}>0$ independent of $s$.
\end{remark}


\subsection{FE discretization}\label{section:FE discretization}

We follow \cite{Kuo2012QMCFEM} and in order to obtain convergence rates of the finite element solutions we make the following additional assumptions
\begin{align}
\text{$\Omega \subset \mathbb R^d$ is convex bounded polyhedron with plane faces}\,,\label{eq:A6}\\
\bar a \in W^{1,\infty}(\Omega)\,, \quad \sum_{j\geq 1} \|\psi_j\|_{W^{1,\infty}(\Omega)} < \infty\,,\label{eq:A4}
\end{align}
where $\|v\|_{W^{1,\infty}(\Omega)} := \max\{\|v\|_{L^{\infty}(\Omega)}\,, \|\nabla v\|_{L^{\infty}(\Omega)}\}$.
The assumption that the geometry of the computational domain $\Omega$ is approximated exactly by the FE mesh simplifies the forthcoming analysis, however, this assumption can substantially be relaxed. For example, standard results on FE analysis as, e.g., in \cite{Ciarlet} will imply corresponding results for domains $\Omega$ with curved boundaries.

In the following let $\{V_h\}_h$ denote a one-parameter family of subspaces $V_h \subset H_0^1(\Omega)$ of dimensions $M_h < \infty$, where $M_h$ is of exact order $h^{-d}$, with $d = 1,2,3$ denoting the spatial dimension.
We think of the spaces $V_h$ as spaces spanned by continuous, piecewise linear finite element basis functions on a sequence of regular, simplicial meshes in $\Omega$ obtained from an initial, regular triangulation of $\Omega$ by recursive, uniform bisection of simplices. Then it is well known (see details, e.g., in \cite{Gilbarg,Kuo2012QMCFEM}) that for functions $v \in H_0^1(\Omega) \cap H^2(\Omega)$ there exists a constant $C>0$, such that as $h \to 0$
\begin{align}\label{FEabschaetzung}
\inf_{v_h \in V_h} \|v-v_h\|_{H_0^1(\Omega)} \leq C\, h\, \|v\|_{H_0^1(\Omega) \cap H^2(\Omega)}\,,
\end{align}
where $\|v\|_{H_0^1(\Omega) \cap H^2(\Omega)} := ( \|v\|_{L^2(\Omega)}^2 + \|\Delta v\|_{L^2(\Omega)}^2)^{1/2}$. Note that we need the higher regularity in order to derive the asymptotic convergence rate as $h \to 0$. 
For any $\pmb y \in \Xi$ and every $z \in L^2(\Omega)$, we define the parametric finite element approximations $u_h(\cdot,\pmb y,z) \in V_h$ and $q_h(\cdot,\pmb y,z) \in V_h$ by
\begin{align}
b(\pmb y;u_h(\cdot,\pmb y,z),v_h) = \langle z, v_h\rangle \quad \forall v_h \in V_h\,,\label{eq:FEapprox2}
\end{align}
and then
\begin{align}
b(\pmb y;q_h(\cdot,\pmb y,z),w_h) =\langle u_h(\cdot,\pmb y,z)-u_0,w_h\rangle \quad \forall w_h \in V_h\,,\label{eq:FEapprox3}
\end{align}
where $b(\pmb y;\cdot,\cdot)$ is the parametric bilinear form \cref{eq:2.2}. In particular the FE approximation \cref{eq:FEapprox2} and \cref{eq:FEapprox3} are defined pointwise with respect to $\pmb y \in \Xi$ so that the application of a QMC rule to the FE approximation is well defined.
To stress the dependence on $s$ for truncated $\pmb y = (y_1,\ldots,y_s,0,0,\ldots) \in \Xi$ we write $u_{s,h}$  and $q_{s,h}$ instead of $u_{h}$ and $q_h$ in \cref{eq:FEapprox2}  and \cref{eq:FEapprox3}.

\begin{theorem}[Finite element discretization error] \label{theorem:FEapprox}
Under assumptions \cref{eq:A6} and \cref{eq:A4}, for $z \in \mathcal Z$, there holds the asymptotic convergence estimate as $h\to 0$
\begin{align*}\label{eq:theoremFEapproxNitsche}
\sup_{\pmb y \in \Xi} \|q(\cdot,\pmb y,z)-q_h(\cdot,\pmb y,z)\|_{L^2(\Omega)} \leq C h^2 \left(\|z\|_{L^2(\Omega)} + \|u_0\|_{L^2(\Omega)} \right)\,,
\end{align*}
and
\begin{align*}\label{eq:EqualWeightSup}
\left\| \int_{\Xi} \left(q(\cdot,\pmb y,z) -  q_{h}(\cdot,\pmb y,z)\right)\, \mathrm d\pmb y  \right\|_{L^2(\Omega)} \leq Ch^2 \left(\|z\|_{L^2(\Omega)} + \|u_0\|_{L^2(\Omega)} \right)\,,
\end{align*}
where $C>0$ is independent of $h$, $z$ and $u_0$ and $\pmb y$.
\end{theorem}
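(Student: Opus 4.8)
The plan is to deduce both bounds from the classical Aubin--Nitsche duality argument applied twice --- once to the primal problem \cref{eq:FEapprox2} and once to the adjoint problem \cref{eq:FEapprox3} --- while (i) keeping careful track of the fact that $q_h$ is assembled from the finite element state $u_h$ rather than from the exact state $u$, and (ii) ensuring that every constant is uniform in $\pmb y\in\Xi$. The second assertion will follow from the first for free: since $\mu$ is a probability measure on $\Xi$, the triangle inequality for the Bochner integral (Jensen's inequality) gives $\big\|\int_\Xi(q-q_h)\,\mathrm d\pmb y\big\|_{L^2(\Omega)}\le\int_\Xi\|q-q_h\|_{L^2(\Omega)}\,\mathrm d\pmb y\le\sup_{\pmb y\in\Xi}\|q-q_h\|_{L^2(\Omega)}$, so it suffices to establish the pointwise-in-$\pmb y$ estimate.

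To prove that estimate I would first introduce the auxiliary Galerkin approximation $\widetilde q_h(\cdot,\pmb y,z)\in V_h$ of the adjoint problem with the \emph{exact} right-hand side, i.e. $b(\pmb y;\widetilde q_h,w_h)=\langle u(\cdot,\pmb y,z)-u_0,w_h\rangle$ for all $w_h\in V_h$, and split $q-q_h=(q-\widetilde q_h)+(\widetilde q_h-q_h)$. The term $q-\widetilde q_h$ is a conforming finite element error for the single elliptic problem solved by $q$, so C\'ea's lemma combined with the approximation property \cref{FEabschaetzung} yields $\|q-\widetilde q_h\|_{H_0^1(\Omega)}\le Ch\,\|q\|_{H_0^1(\Omega)\cap H^2(\Omega)}$, and the Aubin--Nitsche duality trick (exploiting $H^2$-regularity of the dual problem, uniformly in $\pmb y$) upgrades this to $\|q-\widetilde q_h\|_{L^2(\Omega)}\le Ch^2\|q\|_{H_0^1(\Omega)\cap H^2(\Omega)}$. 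The term $\widetilde q_h-q_h$ belongs to $V_h$ and satisfies $b(\pmb y;\widetilde q_h-q_h,w_h)=\langle u-u_h,w_h\rangle$ for all $w_h\in V_h$; testing with $w_h=\widetilde q_h-q_h$ and using coercivity together with \cref{c2} gives $\|\widetilde q_h-q_h\|_{H_0^1(\Omega)}\le\frac{c_2}{a_{\min}}\|u-u_h\|_{L^2(\Omega)}$, and hence $\|\widetilde q_h-q_h\|_{L^2(\Omega)}\le\frac{c_2^2}{a_{\min}}\|u-u_h\|_{L^2(\Omega)}$. It then remains only to estimate $\|q\|_{H_0^1(\Omega)\cap H^2(\Omega)}$ and the primal $L^2$ error $\|u-u_h\|_{L^2(\Omega)}$.

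For these I would invoke, following \cite{Kuo2012QMCFEM}, the $\pmb y$-uniform elliptic regularity implied by assumptions \cref{eq:A6} and \cref{eq:A4}: for every $\pmb y\in\Xi$ the primal solution obeys $\|u(\cdot,\pmb y,z)\|_{H_0^1(\Omega)\cap H^2(\Omega)}\le C\|z\|_{L^2(\Omega)}$, and, since the adjoint right-hand side $u-u_0$ lies in $L^2(\Omega)$, the adjoint solution obeys $\|q(\cdot,\pmb y,z)\|_{H_0^1(\Omega)\cap H^2(\Omega)}\le C\|u(\cdot,\pmb y,z)-u_0\|_{L^2(\Omega)}$, with $C$ depending only on $\Omega$, $a_{\min}$, $a_{\max}$, and $\sup_{\pmb y\in\Xi}\|a(\cdot,\pmb y)\|_{W^{1,\infty}(\Omega)}<\infty$. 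Combining the second bound with \cref{eq:2.5} and \cref{c2} gives $\|u-u_0\|_{L^2(\Omega)}\le C(\|z\|_{L^2(\Omega)}+\|u_0\|_{L^2(\Omega)})$, whence $\|q-\widetilde q_h\|_{L^2(\Omega)}\le Ch^2(\|z\|_{L^2(\Omega)}+\|u_0\|_{L^2(\Omega)})$; and the standard Aubin--Nitsche estimate for the primal problem gives $\|u-u_h\|_{L^2(\Omega)}\le Ch^2\|u\|_{H_0^1(\Omega)\cap H^2(\Omega)}\le Ch^2\|z\|_{L^2(\Omega)}$. Adding the two contributions produces $\sup_{\pmb y\in\Xi}\|q-q_h\|_{L^2(\Omega)}\le Ch^2(\|z\|_{L^2(\Omega)}+\|u_0\|_{L^2(\Omega)})$ with $C$ independent of $h$, $z$, $u_0$ and $\pmb y$, as claimed. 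The step I expect to be the main obstacle is precisely this $\pmb y$-uniform $H^2$-regularity input --- checking that the elliptic regularity constant can be taken independent of $\pmb y\in\Xi$ under \cref{eq:A4}, which is also what legitimizes the two Aubin--Nitsche arguments above --- together with the (more routine but essential) bookkeeping ensuring that the extra term $\widetilde q_h-q_h$, generated by $q_h$ depending on $u_h$ rather than on $u$, is genuinely $O(h^2)$ and not merely $O(h)$.
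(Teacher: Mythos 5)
Your proposal is correct and follows essentially the same route as the paper: your auxiliary Galerkin approximation $\widetilde q_h=S_{\pmb y,h}(u(\cdot,\pmb y,z)-u_0)$ reproduces exactly the paper's splitting into $(S_{\pmb y}-S_{\pmb y,h})(u-u_0)$ plus a discretely stable term controlled by $\|u-u_h\|_{L^2(\Omega)}$, and both pieces are handled by the Aubin--Nitsche argument resting on the $\pmb y$-uniform $H^2$-regularity of \cite[Theorem 4.1]{Kuo2012QMCFEM} under \cref{eq:A6} and \cref{eq:A4}. The only cosmetic differences are the constant in the stability bound for the discrete solution operator ($c_2^2/a_{\min}$ versus the paper's $c_1c_2/a_{\min}$) and your use of Jensen's inequality rather than Cauchy--Schwarz to pass from the pointwise-in-$\pmb y$ bound to the integrated one.
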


For truncated $\pmb y = (y_1,\ldots,y_s,0,0,\ldots) \in \Xi$, the result of \cref{theorem:FEapprox} clearly holds with $q$ and $q_h$ replaced by $q_s$ and $q_{s,h}$ respectively.

\begin{proof}
Let $S_{\pmb y,h}$ be the self-adjoint solution operator defined analogously to \cref{def:solutionoperator}; which for every $\pmb y\in \Xi$ assigns to each function $f \in L^2(\Omega)$ the unique solution $g_h(\cdot,\pmb y) \in V_h \subset H_0^1(\Omega) \subset L^2(\Omega)$. In particular $S_{\pmb y,h}$ is the solution operator of the problem: find $g_h \in V_h$ such that $b(\pmb y;g_h,v_h) = \langle f,v_h\rangle$ $\forall v_h \in V_h$. Note that $S_{\pmb y,h}$ is a bounded and linear operator for given $\pmb y \in \Xi$.
For every $\pmb y \in \Xi$, we can thus estimate
\begin{align}
\|q(\cdot,\pmb y,z)-q_h(\cdot,\pmb y,z)\|_{L^2(\Omega)} &= \|S_{\pmb y}(u(\cdot,\pmb y,z)-u_0)-S_{\pmb y,h}(u_h(\cdot,\pmb y,z)-u_0)\|_{L^2(\Omega)}\notag\\ 
&\leq \|S_{\pmb y}(u(\cdot,\pmb y,z)-u_0)-S_{\pmb y,h}(u(\cdot,\pmb y,z)-u_0)\|_{L^2(\Omega)}\notag\\
&\quad+ \|S_{\pmb y,h}u(\cdot,\pmb y,z)-S_{\pmb y,h}u_h(\cdot,\pmb y,z)\|_{L^2(\Omega)}\notag\\ 
&\leq \|(S_{\pmb y}-S_{\pmb y,h})(u(\cdot,\pmb y,z)-u_0)\|_{L^2(\Omega)}\notag\\
&\quad+ \frac{c_1c_2}{a_{\min}} \|u(\cdot,\pmb y,z)-u_h(\cdot,\pmb y,z)\|_{L^2(\Omega)}\,.\label{eq:AUB2}
\end{align}
The last step is true because \cref{eq:2.5} holds for all $v \in H_0^1(\Omega)$ and therefore it holds in particular for $u_h \in V_h \subset H_0^1(\Omega)$. Hence we can bound $\|S_{\pmb y,h}\|_{\mathcal L(L^2(\Omega))} \leq \frac{c_1c_2}{a_{\min}}$.
We can now apply the Aubin--Nitsche duality argument (see, e.g., \cite{Gilbarg}) to bound \cref{eq:AUB2}:
for $w\in L^2(\Omega)$ it holds that
\begin{align}\label{eq:AubinNitscheTrick1}
\|w\|_{L^2(\Omega)} = \sup_{g \in L^2(\Omega) \setminus \{0\}} \frac{\langle g,w\rangle}{\|g\|_{L^2(\Omega)}}\,.
\end{align}
From \cref{eq:parametricweakproblem} and \cref{eq:FEapprox2} follows the Galerkin orthogonality: $b(\pmb y;u(\cdot,\pmb y,z)-u_h(\cdot,\pmb y,z), v_h) = 0$ for all $v_h \in V_h$. Further we define $u_g(\cdot,\pmb y)$ for every $\pmb y \in \Xi$ as the unique solution of the problem: find $u_g(\cdot,\pmb y) \in H_0^1(\Omega)$ such that
\begin{align*}
b(\pmb y;u_g(\cdot,\pmb y),w) = \langle g,w\rangle \quad \forall w \in H_0^1(\Omega)\,,
\end{align*}
which leads together with the choice $w:= u - u_h$ and the Galerkin orthogonality of the FE discretization to
\begin{align*}
\langle g, u(\cdot,\pmb y,z) - u_h(\cdot,\pmb y,z) \rangle &= b(\pmb y;u_g(\cdot,\pmb y),u(\cdot,\pmb y,z) - u_h(\cdot,\pmb y,z) )\\
&= b(\pmb y; u_g(\cdot, \pmb y) - v_h, u(\cdot,\pmb y,z) - u_h(\cdot,\pmb y,z) )\\
&\leq a_{\max} \|u_g(\cdot,\pmb y) - v_h\|_{H_0^1(\Omega)} \|u(\cdot,\pmb y,z) - u_h(\cdot,\pmb y,z)\|_{H_0^1(\Omega)}\,.
\end{align*}
With \cref{eq:AubinNitscheTrick1} we get for every $\pmb y \in \Xi$ that
\begin{align*}
\|u(\cdot,\pmb y,z) - u_h(\cdot,\pmb y,z)\|_{L^2(\Omega)}& = \sup_{g \in L^2(\Omega) \setminus \{0\}} \frac{\langle g,  u(\cdot,\pmb y,z) - u_h(\cdot,\pmb y,z) \rangle}{\|g\|_{L^2(\Omega)}}&\hfill\\ 
&\!\!\!\!\!\!\!\!\!\!\!\!\!\!\!\!\!\!\!\!\!\!\!\!\!\!\!\!\!\!\!\!\!\!\!\!\!\!\!\!\!\!\!\!\!\!\leq a_{\max} \|u(\cdot,\pmb y,z) - u_h(\cdot,\pmb y,z)\|_{H_0^1(\Omega)} \sup_{g \in L^2(\Omega) \setminus \{0\}} \left\{ \inf_{v_h \in V} \frac{\|u_g(\cdot,\pmb y) - v_h\|_{H_0^1(\Omega)} }{\|g\|_{L^2(\Omega)}} \right\}\,.
\end{align*}
Now from \cref{FEabschaetzung} we infer for every $\pmb y \in \Xi$ that
\begin{align*}
\inf_{v_h \in V} \|u_g(\cdot,\pmb y) - v_h\|_{H_0^1(\Omega)} \leq C_3\,h\, \|u_g(\cdot,\pmb y) \|_{H_0^1(\Omega) \cap H^2(\Omega)}
\leq C_4 C_3 \,h\, \|g\|_{L^2(\Omega)}\,,
\end{align*}
where $C_3$ is the constant in \cref{FEabschaetzung}. The last step follows from \cite[Theorem 4.1]{Kuo2012QMCFEM} with $t=1$, and $C_4$ is the constant in that theorem.
For every $\pmb y \in \Xi$, we further obtain with C\'{e}a's lemma, \cref{FEabschaetzung} and \cite[Theorem 4.1]{Kuo2012QMCFEM}
\begin{align*}
\|u(\cdot,\pmb y,z) - u_h(\cdot,\pmb y,z)\|_{H_0^1(\Omega)} 
&\leq \frac{a_{\max}}{a_{\min}} \inf_{v_h \in V} \|u(\cdot,\pmb y,z) - v_h\|_{H_0^1(\Omega)}\\
&\leq \frac{a_{\max}}{a_{\min}} C_3 \,h\, \|u(\cdot,\pmb y,z)\|_{H_0^1(\Omega) \cap H^2(\Omega)}\\
&\leq \frac{a_{\max}}{a_{\min}}  C_4 C_3 \,h\, \|z\|_{L^2(\Omega)}\,.
\end{align*}
Thus for every $\pmb y \in \Xi$ it holds that
\begin{align}
\|u(\cdot,\pmb y,z)-u_h(\cdot,\pmb y,z)\|_{L^2(\Omega)} \leq \frac{a_{\max}^2}{a_{\min}} C^2_4 C^2_3\, h^2\, \|z\|_{L^2(\Omega)}\,.\label{eq:combFE1}
\end{align}
By the same argument we get for every $\pmb y \in \Xi$ that
\begin{align}
\|(S_{\pmb y}-S_{\pmb y,h})(u(\cdot,\pmb y,z)-u_0)\|_{L^2(\Omega)} 
\leq \frac{a_{\max}^2}{a_{\min}} C^2_4 C^2_3\, h^2 \left(\frac{c_1c_2}{a_{\min}}\|z\|_{L^2(\Omega)} + \|u_0\|_{L^2(\Omega)}\right).\label{eq:combFE2}
\end{align}
Combining \cref{eq:combFE1} and \cref{eq:combFE2} in \cref{eq:AUB2} leads for every $\pmb y \in \Xi$ to
\begin{align*}
\|q(\cdot,\pmb y,z)-q_h(\cdot,\pmb y,z)\|_{L^2(\Omega)} 
&\leq \frac{a_{\max}^2}{a_{\min}} C^2_4 C^2_3 \,h^2\left( \frac{2\,c_1c_2}{a_{\min}} \|z\|_{L^2(\Omega)} + \|u_0\|_{L^2(\Omega)}\right) .
\end{align*}
The second result easily follows from the first result since
\begin{align*}
\left\| \int_{\Xi} \left(q(\cdot,\pmb y,z) -  q_{h}(\cdot,\pmb y,z)\right)\, \mathrm d\pmb y\right\|_{L^2(\Omega)}^2 \leq \int_{\Xi} \| q(\cdot,\pmb y,z)-q_{h}(\cdot,\pmb y,z)\|_{L^2(\Omega)}^2\, \mathrm d\pmb y\,.
\end{align*}
\end{proof}


\subsection{Regularity of the adjoint solution}
\label{subsection54}

In the subsequent QMC error analysis we shall require bounds on the mixed first partial derivatives of the parametric solution $u$ as well as bounds on the mixed first partial derivatives of the adjoint parametric solution $q$.
For the solution $u(\cdot,\pmb y,z)$ of the state equation \cref{eq:parametricweakproblem} we know the following result.
\begin{lemma}\label{lemma:Derivative}
For every $z \in H^{-1}(\Omega)$, every $\pmb y \in \Xi$ and every $\pmb \nu \in \mathcal D$
we have
\begin{align*}
\|(\partial^{\pmb \nu} u)(\cdot,\pmb y,z) \|_{H_0^1(\Omega)} := \| \nabla (\partial^{\pmb \nu} u)(\cdot,\pmb y,z) \|_{L^2(\Omega)} \leq |\pmb \nu|!\, \pmb b^{\pmb \nu} \frac{\|z\|_{H^{-1}(\Omega)}}{a_{\min}}\,.
\end{align*}
\end{lemma}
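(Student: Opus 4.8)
The plan is to follow the by-now standard recursive argument of Cohen--DeVore--Schwab and Kuo--Schwab--Sloan (see \cite{CohenDeVoreSchwab,Kuo2012QMCFEM}), exploiting the fact that the diffusion coefficient \cref{eq:1.9} is \emph{affine} in the parameters. First I would differentiate the parametric weak form \cref{eq:parametricweakproblem} with respect to $\pmb y$. Since $a(\pmb x,\pmb y) = \bar a(\pmb x) + \sum_{j\ge1} y_j\psi_j(\pmb x)$, we have $\partial_{y_j} a = \psi_j$ while all second and higher order parametric derivatives of $a$ vanish. Hence, applying $\partial^{\pmb\nu}$ to the identity $b(\pmb y;u(\cdot,\pmb y,z),v) = \langle z,v\rangle$ (whose right-hand side is independent of $\pmb y$) and using the Leibniz rule, every term except those with $\pmb m=\pmb 0$ or $\pmb m = \pmb e_j$ drops out, yielding for each $\pmb\nu$ with $|\pmb\nu|\ge1$
\[
  b(\pmb y;(\partial^{\pmb\nu}u)(\cdot,\pmb y,z),v) \,=\, -\sum_{j\in\mathrm{supp}(\pmb\nu)} \nu_j \int_\Omega \psi_j\,\nabla(\partial^{\pmb\nu-\pmb e_j}u)(\cdot,\pmb y,z)\cdot\nabla v\,\mathrm d\pmb x \qquad \forall v\in H_0^1(\Omega).
\]

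Next I would test this identity with $v = (\partial^{\pmb\nu}u)(\cdot,\pmb y,z)$, using coercivity of $b(\pmb y;\cdot,\cdot)$ on the left and the elementary estimate $|\int_\Omega \psi_j \nabla w\cdot\nabla v\,\mathrm d\pmb x| \le \|\psi_j\|_{L^\infty(\Omega)}\|w\|_{H_0^1(\Omega)}\|v\|_{H_0^1(\Omega)}$ on the right. After cancelling one factor $\|(\partial^{\pmb\nu}u)(\cdot,\pmb y,z)\|_{H_0^1(\Omega)}$ this gives the recursion
\[
  \|(\partial^{\pmb\nu}u)(\cdot,\pmb y,z)\|_{H_0^1(\Omega)} \,\le\, \sum_{j\in\mathrm{supp}(\pmb\nu)} \nu_j\, b_j\, \|(\partial^{\pmb\nu-\pmb e_j}u)(\cdot,\pmb y,z)\|_{H_0^1(\Omega)},
\]
with $b_j$ as in \cref{b_j}. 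The claimed bound then follows by induction on $|\pmb\nu|$: the base case $|\pmb\nu|=0$ is exactly \cref{theorem:theorem1}, and in the inductive step I would substitute the hypothesis $\|(\partial^{\pmb\nu-\pmb e_j}u)(\cdot,\pmb y,z)\|_{H_0^1(\Omega)} \le (|\pmb\nu|-1)!\,\pmb b^{\pmb\nu-\pmb e_j}\,\|z\|_{H^{-1}(\Omega)}/a_{\min}$ into the recursion, use $b_j\,\pmb b^{\pmb\nu-\pmb e_j} = \pmb b^{\pmb\nu}$ (valid since $\nu_j\ge1$ on $\mathrm{supp}(\pmb\nu)$), and note $\sum_{j\in\mathrm{supp}(\pmb\nu)}\nu_j = |\pmb\nu|$, so that $(|\pmb\nu|-1)!\,|\pmb\nu| = |\pmb\nu|!$.

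The one genuinely delicate point is justifying \emph{a priori} that $\pmb y\mapsto u(\cdot,\pmb y,z)$ is (infinitely) differentiable as an $H_0^1(\Omega)$-valued map and that the termwise differentiation of the bilinear form above is legitimate rather than merely formal. This is standard: using \cref{eq:1.10} together with the uniform ellipticity assumption, one shows via the implicit function theorem (or a Neumann series in the finitely many relevant coordinates) that $\pmb y\mapsto A(\pmb y)^{-1}$ depends analytically on each coordinate, so the derivatives $\partial^{\pmb\nu}u$ are well defined and lie in $H_0^1(\Omega)$, and the identity above holds rigorously. Since for finitely supported $\pmb\nu$ only finitely many parametric variables enter, I would either record this briefly or simply defer to \cite{CohenDeVoreSchwab,Kuo2012QMCFEM}, where the identical argument is carried out in full.
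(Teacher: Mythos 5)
Your proof is correct and is precisely the standard recursive argument of \cite{CohenDeVoreSchwab,Kuo2012QMCFEM} that the paper itself defers to by citation rather than reproving; the recursion, the induction on $|\pmb\nu|$, and the identity $(|\pmb\nu|-1)!\sum_j\nu_j=|\pmb\nu|!$ all check out. It is also the same technique the paper carries out explicitly for the harder adjoint case in \cref{lemma:adjointDerivative}, so nothing further is needed.
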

This lemma can be found, e.g., in \cite{CohenDeVoreSchwab}. 

In contrast to the parametric weak problem \cref{eq:parametricweakproblem}, the right-hand side of the adjoint parametric weak problem  \cref{eq:adjointparametricweakproblem} depends on $\pmb y \in \Xi$.
In particular the problem is of the following form: for every $\pmb y \in \Xi$, find $q(\cdot,\pmb y,z) \in H_0^1(\Omega)$ such that
\begin{equation}\label{eq:4.11}
\int_{\Omega} a(\pmb x,\pmb y) \nabla q(\pmb x,\pmb y,z) \cdot \nabla v(\pmb x)\, \mathrm d\pmb x = \int_{\Omega} \tilde f(\pmb x,\pmb y,z) v(\pmb x)\, \mathrm d\pmb x\,, \quad v \in H_0^1(\Omega)\,,
\end{equation}
where the right-hand side $\tilde f(\pmb x,\pmb y,z) := u(\pmb x,\pmb y,z)-u_0(\pmb x)$ now also depends on $z \in L^2(\Omega)$ and $\pmb y \in \Xi$. 
\cref{lemma:adjointDerivative} below gives a bound for the mixed derivatives of the solution $q(\cdot,\pmb y,z) \in H_0^1(\Omega)$ of \cref{eq:4.11}. Similar regularity results to the following can be found in \cite{KunothSchwab} (uniform case) and \cite{ChenGhattas} (log-normal case) for problems with stochastic controls $z$, depending on $\pmb y$. In particular, in the unconstrained case $\mathcal Z = L^2(\Omega)$ the KKT-system \cref{eq:KKT} reduces to an affine parametric linear saddle point operator and the theory, e.g., from \cite{KunothSchwab, Schwab} can be applied.

\begin{lemma}\label{lemma:adjointDerivative}For every $z \in L^2(\Omega)$, every $\pmb y \in \Xi$ and every $\pmb \nu \in \mathcal D$, we have for the corresponding adjoint state $q(\cdot,\pmb y,z)$ that
\begin{align*}
\|(\partial^{\pmb \nu} q)(\cdot,\pmb y,z) \|_{H_0^1(\Omega)} \leq (|\pmb \nu| + 1)!\, \pmb b^{\pmb \nu}\, C_q\,(\|z\|_{L^2(\Omega)} + \|u_0\|_{L^2(\Omega)})\,,
\end{align*}
where $C_q$ is defined in \cref{coro}.
\end{lemma}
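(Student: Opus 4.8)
The plan is to prove the bound by induction on the order $|\pmb\nu|$, exploiting the recursion that results from differentiating the adjoint weak formulation \cref{eq:4.11} in the parametric variables. First I would differentiate: since the diffusion coefficient \cref{eq:1.9} is affine in $\pmb y$, one has $\partial^{\pmb e_j}a=\psi_j$ and $\partial^{\pmb m}a=0$ whenever $|\pmb m|\ge 2$, so the Leibniz product rule applied to \cref{eq:4.11} gives, for every $\pmb\nu\in\mathcal D$ and all $v\in H_0^1(\Omega)$,
\begin{align*}
\int_\Omega a(\pmb x,\pmb y)\,\nabla(\partial^{\pmb\nu}q)(\pmb x,\pmb y,z)\cdot\nabla v(\pmb x)\,\mathrm d\pmb x
&= \int_\Omega (\partial^{\pmb\nu}\tilde f)(\pmb x,\pmb y,z)\,v(\pmb x)\,\mathrm d\pmb x\\
&\quad - \sum_{j\in\text{supp}(\pmb\nu)}\nu_j\int_\Omega \psi_j(\pmb x)\,\nabla(\partial^{\pmb\nu-\pmb e_j}q)(\pmb x,\pmb y,z)\cdot\nabla v(\pmb x)\,\mathrm d\pmb x\,,
\end{align*}
where $\tilde f(\cdot,\pmb y,z)=u(\cdot,\pmb y,z)-u_0$ as in \cref{subsection54}. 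The existence of $\partial^{\pmb\nu}q$ and the validity of this identity follow from the analytic dependence of $q(\cdot,\pmb y,z)$ on $\pmb y$, exactly as for $u$ in \cref{lemma:Derivative}; alternatively one may build this into the same induction via the Lax--Milgram lemma.

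Next I would derive the single-step a priori estimate: testing the identity with $v=(\partial^{\pmb\nu}q)(\cdot,\pmb y,z)$, using coercivity on the left and the Cauchy--Schwarz inequality on the right together with $\|\psi_j\|_{L^\infty(\Omega)}=a_{\min}b_j$, and dividing by $\|(\partial^{\pmb\nu}q)(\cdot,\pmb y,z)\|_{H_0^1(\Omega)}$ (the bound being trivial if this vanishes), yields
\begin{align*}
\|(\partial^{\pmb\nu}q)(\cdot,\pmb y,z)\|_{H_0^1(\Omega)}
&\,\le\,\frac{\|(\partial^{\pmb\nu}\tilde f)(\cdot,\pmb y,z)\|_{H^{-1}(\Omega)}}{a_{\min}}
+\sum_{j\in\text{supp}(\pmb\nu)}\nu_j\,b_j\,\|(\partial^{\pmb\nu-\pmb e_j}q)(\cdot,\pmb y,z)\|_{H_0^1(\Omega)}\,.
\end{align*}
For the base case $\pmb\nu=\pmb 0$ this source term is precisely \cref{coro}, which gives $\|q(\cdot,\pmb y,z)\|_{H_0^1(\Omega)}\le C_q(\|z\|_{L^2(\Omega)}+\|u_0\|_{L^2(\Omega)})$ and hence the claim since $(0+1)!=1$ and $\pmb b^{\pmb 0}=1$. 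For $\pmb\nu\ne\pmb 0$ one has $\partial^{\pmb\nu}\tilde f=\partial^{\pmb\nu}u$ because $u_0$ is $\pmb y$-independent, so \cref{lemma:Derivative} together with \cref{c1}, \cref{c2} and $\|z\|_{H^{-1}(\Omega)}\le c_1\|z\|_{L^2(\Omega)}$ bounds the source term by $\frac{c_1^2c_2}{a_{\min}^2}|\pmb\nu|!\,\pmb b^{\pmb\nu}\|z\|_{L^2(\Omega)}\le C_q\,|\pmb\nu|!\,\pmb b^{\pmb\nu}(\|z\|_{L^2(\Omega)}+\|u_0\|_{L^2(\Omega)})$, using $C_q=\max(c_1/a_{\min},c_1^2c_2/a_{\min}^2)$.

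Finally I would close the induction: substituting the hypothesis for order $|\pmb\nu|-1$ into the sum and using $\nu_j\,b_j\,\pmb b^{\pmb\nu-\pmb e_j}=\nu_j\,\pmb b^{\pmb\nu}$, $|\pmb\nu-\pmb e_j|=|\pmb\nu|-1$ and $\sum_{j\in\text{supp}(\pmb\nu)}\nu_j=|\pmb\nu|$, the sum is at most $|\pmb\nu|\cdot|\pmb\nu|!\,\pmb b^{\pmb\nu}C_q(\|z\|_{L^2(\Omega)}+\|u_0\|_{L^2(\Omega)})$; adding the source-term bound and noting $|\pmb\nu|!+|\pmb\nu|\cdot|\pmb\nu|!=(|\pmb\nu|+1)!$ gives exactly the asserted estimate. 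The computation is routine throughout; the only points that require care are the justification that $\partial^{\pmb\nu}q$ exists and satisfies the differentiated weak form — settled by the analyticity argument already invoked for $u$ — and the bookkeeping of constants so that the factor $c_1^2c_2/a_{\min}^2$ in the source term is absorbed into $C_q$ and the combinatorial factors telescope precisely to $(|\pmb\nu|+1)!$.
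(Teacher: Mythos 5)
Your proof is correct and follows essentially the same route as the paper: differentiate the adjoint weak form via Leibniz, test with $\partial^{\pmb\nu}q$ to obtain the recursion, and bound the source term $\|\partial^{\pmb\nu}\tilde f\|_{H^{-1}}/a_{\min}$ through \cref{lemma:Derivative} and the embeddings so that the constant is absorbed into $C_q$. The only (harmless) divergence is in the last step: the paper enlarges the constants so the recursion also holds at $\pmb\nu=\pmb 0$ and then invokes the general combinatorial identity of \cite[Lemma 9.1]{Kuo2016ApplicationOQ}, whereas you close the recursion by a direct induction on $|\pmb\nu|$ using $|\pmb\nu|!+|\pmb\nu|\cdot|\pmb\nu|!=(|\pmb\nu|+1)!$ — a more self-contained bookkeeping that yields the identical bound.
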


\begin{proof}
The case $\pmb \nu = \pmb 0$ is given by the \emph{a priori} bound \cref{coro:2.4}. Now consider $\pmb \nu \neq \pmb 0$.
Applying the mixed derivative operator $\partial^{\pmb \nu}$ to \cref{eq:4.11} and using the Leibniz product rule, we obtain the identity
\begin{align}
\int_{\Omega} \left( \sum_{\pmb m\leq \pmb \nu} \begin{pmatrix} \pmb \nu \\ \pmb m \end{pmatrix} (\partial^{\pmb \nu}a)(\pmb x,\pmb y) \nabla (\partial^{\pmb \nu- \pmb m}q)(\pmb x,\pmb y,z) \cdot \nabla v(\pmb x) \right)\, \mathrm d\pmb x\\ = \int_{\Omega} (\partial^{\pmb \nu} \tilde f)(\pmb x,\pmb y,z)\ v(\pmb x)\, \mathrm d\pmb x \quad \forall v \in H_0^1(\Omega)\notag\,,
\end{align}
where by $\pmb m \leq \pmb \nu$ we mean $m_j \leq \nu_j$ for all $j$ and $\binom{\pmb \nu}{\pmb m} := \prod_{j\geq1} \binom{\nu_j}{m_j}$.
Due to the linear dependence of $a(\pmb x,\pmb y)$ on the parameters $\pmb y$, the partial derivative $\partial^{\pmb \nu}$ of $a$ with respect to $\pmb y$ satisfies
\begin{align*}
(\partial^{\pmb m} a)(\pmb x,\pmb y) = \begin{cases}
a(\pmb x,\pmb y) & \text{if } \pmb m = \pmb 0\,,\\
\psi_j(\pmb x) & \text{if } \pmb m = \pmb e_j\,, \\
0 & \text{else}\,.
\end{cases}
\end{align*}
Setting $v = (\partial^{\pmb \nu}q)(\cdot,\pmb y,z)$ and separating out the $\pmb m= \pmb 0$ term, we obtain
\begin{align*}
\int_{\Omega} a |\nabla (\partial^{\pmb \nu} q)(\pmb x,\pmb y,z)|^2\, \mathrm d\pmb x = &-\sum_{j \in \text{supp}(\pmb \nu)} \nu_j \int_{\Omega} \psi_j(\pmb x) \nabla (\partial^{\pmb \nu- \pmb e_j}q)(\pmb x,\pmb y,z) \cdot \nabla(\partial^{\pmb \nu}q)(\pmb x,\pmb y,z)\, \mathrm d\pmb x\\ 
&+ \int_{\Omega} (\partial^{\pmb \nu} \tilde f)(\pmb x,\pmb y,z) (\partial^{\pmb \nu}q)(\pmb x, \pmb y,z)\, \mathrm d\pmb x\,,
\end{align*}
which yields
\begin{align*}
a_{\min} \| (\partial^{\pmb \nu} q )(\cdot,\pmb y,z) \|^2_{H_0^1(\Omega)} &\leq \sum_{j \geq 1} \nu_j \|\psi_j\|_{L^{\infty}(\Omega)} \|(\partial^{\pmb \nu- \pmb e_j} q)(\cdot,\pmb y,z)\|_{H_0^1(\Omega)} \| (\partial^{\pmb \nu}q)(\cdot,\pmb y,z)\|_{H_0^1(\Omega)}\\
&\quad+ \|(\partial^{\pmb \nu} \tilde f)(\cdot,\pmb y,z) \|_{H^{-1}(\Omega)}  \|(\partial^{\pmb \nu}q)(\cdot,\pmb y,z)\|_{H_0^1(\Omega)} \\
&= \sum_{j \geq 1} \nu_j \|\psi_j\|_{L^{\infty}(\Omega)} \|(\partial^{\pmb \nu- \pmb e_j} q)(\cdot,\pmb y,z)\|_{H_0^1(\Omega)} \| (\partial^{\pmb \nu}q)(\cdot,\pmb y,z)\|_{H_0^1(\Omega)}\\
&\quad+ \|(\partial^{\pmb \nu} \tilde f)(\cdot,\pmb y,z) \|_{H^{-1}(\Omega)}  \| (\partial^{\pmb \nu}q)(\cdot,\pmb y,z)\|_{H_0^1(\Omega)} \,,
\end{align*}
and hence
\begin{align*}
\| (\partial^{\pmb \nu} q )(\cdot,\pmb y,z) \|_{H_0^1(\Omega)} &\leq \sum_{j \geq 1} \nu_j b_j \|(\partial^{\pmb \nu-\pmb e_j} q)(\cdot,\pmb y,z)\|_{H_0^1(\Omega)} + \frac{ \|(\partial^{\pmb \nu} \tilde f)(\cdot,\pmb y,z) \|_{H^{-1}(\Omega)} }{a_{\min} }\,. 
\end{align*}
With $\tilde f(\cdot,\pmb y,z) = u(\cdot,\pmb y,z)-u_0(\cdot)$ this reduces to
\begin{align}
\| (\partial^{\pmb \nu} q )(\cdot,\pmb y,z) \|_{H_0^1(\Omega)} &\leq \sum_{j \geq 1} \nu_j b_j \|(\partial^{\pmb \nu- \pmb e_j} q)(\cdot,\pmb y,z)\|_{H_0^1(\Omega)} + \frac{ \|(\partial^{\pmb \nu} u)(\cdot,\pmb y,z) \|_{H^{-1}(\Omega)} }{a_{\min} }\,. \label{eq:4.14}
\end{align}
With \cref{lemma:Derivative} we get
\begin{align*}
\|(\partial^{\pmb \nu} u)(\cdot,\pmb y,z) \|_{H^{-1}(\Omega)} \leq c_1c_2 \|(\partial^{\pmb \nu}u)(\cdot,\pmb y,z)\|_{H_0^1(\Omega)} \leq c_1c_2\, |\pmb \nu|!\, \pmb b^{\pmb \nu} \frac{\|z\|_{H^{-1}(\Omega)}}{a_{\min}}\,,
\end{align*}
where $c_1,c_2>0$ are embedding constants, see \cref{c1,c2}. 
Then \cref{eq:4.14} becomes, for $\pmb \nu \neq \pmb 0$,
\begin{align*}
\| (\partial^{\pmb \nu} q )(\cdot,\pmb y,z) \|_{H_0^1(\Omega)} &\leq \sum_{j \geq 1} \nu_j b_j \|(\partial^{\pmb \nu- \pmb e_j} q)(\cdot,\pmb y,z)\|_{H_0^1(\Omega)} + c_1c_2\, |\pmb \nu|!\, \pmb b^{\pmb \nu} \frac{\|z\|_{H^{-1}(\Omega)}}{a^2_{\min}} \,.
\end{align*}

Now we apply \cite[Lemma 9.1]{Kuo2016ApplicationOQ} to obtain the final bound. For this to work we need the above recursion to hold also for the case $\pmb \nu = \pmb 0$, which is not true when we compare it with the \emph{a priori} bound \cref{coro:2.4}. We therefore enlarge the constants so that the recursion becomes
\begin{align*}
\| (\partial^{\pmb \nu} q )(\cdot,\pmb y,z) \|_{H_0^1(\Omega)} &\leq \sum_{j \geq 1} \nu_j b_j \|(\partial^{\pmb \nu- \pmb e_j} q)(\cdot,\pmb y,z)\|_{H_0^1(\Omega)} + |\pmb \nu|!\, \pmb b^{\pmb \nu}\, C_q\,(\|z\|_{L^2(\Omega)} + \|u_0\|_{L^2(\Omega)}) \,,
\end{align*}
which by \cite[Lemma 9.1]{Kuo2016ApplicationOQ} gives
\begin{align*}
\| (\partial^{\pmb \nu} q )(\cdot,\pmb y,z) \|_{H_0^1(\Omega)} &\leq \sum_{\pmb m \leq\pmb  \nu} \begin{pmatrix} \pmb \nu \\ \pmb m \end{pmatrix} |\pmb m|!\ \pmb b^{\pmb m}\, |\pmb \nu -\pmb m|!\ \pmb b^{\pmb \nu -\pmb m}\, C_q\,(\|z\|_{L^2(\Omega)} + \|u_0\|_{L^2(\Omega)})\\
&= \pmb b^{\pmb \nu}\, C_q\,(\|z\|_{L^2(\Omega)} + \|u_0\|_{L^2(\Omega)}) \sum_{\pmb m \leq \nu} \begin{pmatrix}\pmb  \nu \\ \pmb m \end{pmatrix} |\pmb m|!\ |\pmb \nu -\pmb m|!\\
&= \pmb b^{\pmb \nu}\, C_q\,(\|z\|_{L^2(\Omega)} + \|u_0\|_{L^2(\Omega)})\, (|\pmb \nu| + 1)!\,,
\end{align*}
where the last equality from \cite[equation 9.4]{Kuo2016ApplicationOQ} and $C_q$ is defined in \cref{coro}.
\end{proof}


\subsection{QMC integration error}\label{subsection:QMC}

In this section we review QMC integration over the $s$-dimensional unit cube $\Xi_s = \left[-\frac{1}{2},\frac{1}{2}\right]^s$ centered at the origin, for finite and fixed $s$. An $n$-point QMC approximation is an equal-weight rule of the form
\begin{align*}
\int_{\left[-\frac{1}{2},\frac{1}{2}\right]^s} F(\pmb y_{\{1:s\}})\, \mathrm d\pmb y_{\{1:s\}} \approx \frac{1}{n} \sum_{i=1}^{n} F(\pmb y^{(i)})\,,
\end{align*}
with carefully chosen points $\pmb y^{(1)},\ldots,\pmb y^{(n)} \in \Xi_{s}$.
We shall assume that for each $s\geq 1$ the integrand $F$ belongs to a weighted unanchored Sobolev space $\mathcal W_{\pmb \gamma,s}$, which is a Hilbert space containing functions defined over the unit cube $\left[-\frac{1}{2},\frac{1}{2}\right]^s$, with square integrable mixed first derivatives, with norm given by
\begin{align*}
\|F\|_{\mathcal W_{\pmb \gamma,s}} := \left( \sum_{{\mathfrak u }\subseteq \{1:s\}} \gamma_{{\mathfrak u}}^{-1}  \int_{\left[-\frac{1}{2},\frac{1}{2}\right]^{|{\mathfrak u}|}}\ \left( \int_{\left[-\frac{1}{2},\frac{1}{2}\right]^{s-|{\mathfrak u|}}} \frac{\partial^{|{\mathfrak u}|}F}{\partial \pmb y_{{\mathfrak u}}}(\pmb y_{{\mathfrak u}};\pmb y_{\{1:s\}\setminus{\mathfrak u}})\, \mathrm d\pmb y_{\{1:s\}\setminus{\mathfrak u}} \right)^2\, \mathrm d\pmb y_{{\mathfrak u}}  \right)^{\frac{1}{2}}\,,\label{sobolevnorm}
\end{align*}
where we denote by $\frac{\partial^{|{\mathfrak u}|}F}{\partial \pmb y_{{\mathfrak u}}}$ the mixed first derivative with respect to the active variables $y_j$ with $j \in {\mathfrak u} \subset \mathbb N$ and $\pmb y_{\{1:s\}\setminus {\mathfrak u}}$ denotes the inactive variables $y_j$ with $j \notin {\mathfrak u}$.

We assume there is a weight parameter $ \gamma_{{\mathfrak u}} \geq 0$ associated with each group of variables $\pmb y_{{\mathfrak u}} = (y_j)_{j \in {\mathfrak u}}$ with indices belonging to the set ${\mathfrak u}$. We require that if $\gamma_{{\mathfrak u}} = 0$ then the corresponding integral of the mixed first derivative is also zero and we follow the conventions that $0/0 = 0$, $\gamma_{\emptyset} = 1$ and by $\pmb \gamma$ we denote the set of all weights.
See \cref{section:optimalweights} for the precise choice of weights.

In this work we focus on shifted rank-1 lattice rules, which are QMC rules with quadrature points given by
\begin{align*}
\pmb y^{(i)} = \text{frac}\left(\frac{i\pmb z}{n}+\pmb{\Delta} \right) - \left(\frac{1}{2},\ldots,\frac{1}{2}\right)\,, \quad i = 1,\ldots,n\,,
\end{align*}
where $\pmb z \in \mathbb N^s$ is known as the generating vector, $\pmb{\Delta} \in [0,1]^s$ is the shift and frac$(\cdot)$ means to take the fractional part of each component in the vector. The subtraction of $ \left(\frac{1}{2},\ldots,\frac{1}{2}\right)$ ensures the translation from the usual unit cube $[0,1]^s$ to $\left[-\frac{1}{2},\frac{1}{2}\right]^s$. 

\begin{theorem}[QMC quadrature error] \label{theorem:QMCerror}
For every $\pmb y_{\{1:s\}} \in \Xi_s$ let $q_{s,h}(\cdot,\pmb y_{\{1:s\}},z) \in V_h \subset H_0^1(\Omega)$ denote the dimensionally truncated adjoint FE solution corresponding to a control $z \in \mathcal Z$. Then for ${\mathfrak u} \subset \mathbb N$, $s,m \in \mathbb N$ with $n = 2^m$ and weights $\pmb \gamma = (\gamma_{ {\mathfrak u}})$, a randomly shifted lattice rule with $n$ points in $s$ dimensions can be constructed by a CBC algorithm such that the root-mean-square $L^2$-error $e_{s,h,n}$ for approximating the finite-dimensional integral $\int_{\Xi_s} q_{s,h}(\cdot,\pmb y_{\{1:s\}},z)\ \mathrm d\pmb y_{\{1:s\}}$ satisfies, for all $\lambda \in (\frac{1}{2},1]$,
\begin{align*}
e_{s,h,n} :=& \sqrt{\mathbb E_{\pmb{\Delta}} \left[ \left\|\int_{\left[-\frac{1}{2},\frac{1}{2}\right]^s} q_{s,h}(\cdot,\pmb y_{\{1:s\}},z)\, \mathrm d\pmb y_{\{1:s\}} - \frac{1}{n} \sum_{i = 1}^{n} q_{s,h}(\cdot,\pmb y^{(i)},z) \right\|_{L^2(\Omega)}^2 \right]}\\
\leq& \sqrt{c_2}\, C_{\pmb \gamma,s}(\lambda) \left(\frac{2}{n}\right)^{\frac{1}{2\lambda}}\, C_q\,(\|z\|_{L^2(\Omega)} + \|u_0\|_{L^2(\Omega)})\,,
\end{align*}
where
\begin{align*}
C_{\pmb \gamma,s}(\lambda) := \left( \sum_{{\mathfrak u} \subseteq \{1:s\}} \gamma_{{\mathfrak u}}^{\lambda} \left(\frac{2\zeta(2\lambda)}{(2\pi^2)^{\lambda}}\right)^{|{\mathfrak u}|} \right)^{\frac{1}{2\lambda}} \left(\sum_{{\mathfrak u} \subseteq \{1:s\}} \frac{((|{\mathfrak u}|+1)!)^2}{\gamma_{{\mathfrak u}}} \prod_{j \in {\mathfrak u}} b_j^2 \right)^{\frac{1}{2}}\,,
\end{align*}
where $\mathbb E_{\pmb{\Delta}}[\cdot]$ denotes the expectation with respect to the random shift which is uniformly distributed over $[0,1]^s$, and $\zeta(x) := \sum_{k=1}^{\infty} k^{-x}$ is the Riemann zeta function for $x>1$. Further, the $b_j$ are defined in \cref{b_j} and $C_q$ is defined in \cref{coro}.
\end{theorem}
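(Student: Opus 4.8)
The plan is to reduce the claim to the classical error bound for randomly shifted rank-$1$ lattice rules in the weighted unanchored Sobolev space $\mathcal W_{\pmb\gamma,s}$, applied to a Banach-space-valued integrand, with the requisite parametric regularity supplied by \cref{lemma:adjointDerivative}. First I would recall the standard component-by-component (CBC) construction theorem (see, e.g., \cite{Kuo2012QMCFEM} and the references therein): for $n=2^m$ and arbitrary nonnegative weights $\pmb\gamma=(\gamma_{\mathfrak u})_{{\mathfrak u}\subseteq\{1:s\}}$, a generating vector $\pmb z\in\mathbb N^s$ can be constructed component-by-component such that, writing $I_s$ for exact integration over $\Xi_s$ and $Q_{n,s}(\,\cdot\,;\pmb\Delta)$ for the corresponding shifted lattice rule, every scalar $F\in\mathcal W_{\pmb\gamma,s}$ satisfies, for all $\lambda\in(\tfrac12,1]$,
\begin{align*}
\sqrt{\mathbb E_{\pmb\Delta}\big[|I_s F-Q_{n,s}(F;\pmb\Delta)|^2\big]}\,\le\,\underbrace{\left(\frac{2}{n}\sum_{\emptyset\neq{\mathfrak u}\subseteq\{1:s\}}\gamma_{\mathfrak u}^{\lambda}\Big(\frac{2\zeta(2\lambda)}{(2\pi^2)^{\lambda}}\Big)^{|{\mathfrak u}|}\right)^{\frac{1}{2\lambda}}}_{=:\,e^{\mathrm{sh}}_{n,s}(\pmb z)}\,\|F\|_{\mathcal W_{\pmb\gamma,s}}\,,
\end{align*}
where the factor $2/n$ appears because $\varphi(2^m)=2^{m-1}=n/2$ with $\varphi$ the Euler totient. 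Enlarging the inner sum to run over all ${\mathfrak u}\subseteq\{1:s\}$ (including $\emptyset$) only weakens the estimate and supplies the first factor of $C_{\pmb\gamma,s}(\lambda)$.

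\textbf{Lift to the $L^2(\Omega)$-valued integrand.} Next I would apply this scalar bound pointwise in $\pmb x\in\Omega$ to the map $\pmb y_{\{1:s\}}\mapsto q_{s,h}(\cdot,\pmb y_{\{1:s\}},z)$. Since $q_{s,h}(\cdot,\pmb y_{\{1:s\}},z)=\sum_k c_k(\pmb y_{\{1:s\}})\,\phi_k$ is a finite combination of fixed finite element basis functions $\phi_k$ whose coefficients $c_k$ depend real-analytically on $\pmb y_{\{1:s\}}$ (they solve an affinely parametrised, uniformly invertible linear system), the map $\pmb y_{\{1:s\}}\mapsto q_{s,h}(\pmb x,\pmb y_{\{1:s\}},z)$ belongs to $\mathcal W_{\pmb\gamma,s}$ for every $\pmb x\in\Omega$, so the following interchanges are justified by Tonelli's theorem. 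Exchanging $\mathbb E_{\pmb\Delta}$ with $\int_\Omega$ and then invoking the scalar bound,
\begin{align*}
e_{s,h,n}^2\,=\,\int_\Omega\mathbb E_{\pmb\Delta}\big[|I_s q_{s,h}(\pmb x,\cdot,z)-Q_{n,s}(q_{s,h}(\pmb x,\cdot,z);\pmb\Delta)|^2\big]\,\mathrm d\pmb x\,\le\,\big(e^{\mathrm{sh}}_{n,s}(\pmb z)\big)^2\int_\Omega\|q_{s,h}(\pmb x,\cdot,z)\|_{\mathcal W_{\pmb\gamma,s}}^2\,\mathrm d\pmb x\,,
\end{align*}
so it remains to estimate $\int_\Omega\|q_{s,h}(\pmb x,\cdot,z)\|_{\mathcal W_{\pmb\gamma,s}}^2\,\mathrm d\pmb x$.

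\textbf{Bounding the norm integral via regularity.} To this end I would interchange $\int_\Omega$ with the integrals in the definition of $\|\cdot\|_{\mathcal W_{\pmb\gamma,s}}$ and bound the inner average over the inactive variables (a Bochner integral over a unit-measure domain) by its supremum, obtaining
$\int_\Omega\|q_{s,h}(\pmb x,\cdot,z)\|_{\mathcal W_{\pmb\gamma,s}}^2\,\mathrm d\pmb x\le\sum_{{\mathfrak u}\subseteq\{1:s\}}\gamma_{\mathfrak u}^{-1}\,\sup_{\pmb y_{\{1:s\}}\in\Xi_s}\|(\partial^{\pmb 1_{\mathfrak u}}q_{s,h})(\cdot,\pmb y_{\{1:s\}},z)\|_{L^2(\Omega)}^2$,
where $\pmb 1_{\mathfrak u}$ is the multi-index equal to $1$ on ${\mathfrak u}$ and $0$ elsewhere (so $|\pmb 1_{\mathfrak u}|=|{\mathfrak u}|$ and $\pmb b^{\pmb 1_{\mathfrak u}}=\prod_{j\in{\mathfrak u}}b_j$). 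The key point is that the mixed-derivative bounds of \cref{lemma:Derivative,lemma:adjointDerivative} carry over verbatim to the dimension-truncated finite element solutions $u_{s,h}$ and $q_{s,h}$: the recursions in their proofs use only the coercivity of $b(\pmb y;\cdot,\cdot)$ and the admissibility of testing the differentiated equations with $\partial^{\pmb\nu}$ of the (discrete) solution, both of which remain valid in the conforming subspace $V_h\subset H_0^1(\Omega)$, and setting $y_j=0$ for $j>s$ is harmless. Hence $\|(\partial^{\pmb 1_{\mathfrak u}}q_{s,h})(\cdot,\pmb y_{\{1:s\}},z)\|_{H_0^1(\Omega)}\le(|{\mathfrak u}|+1)!\,(\prod_{j\in{\mathfrak u}}b_j)\,C_q(\|z\|_{L^2(\Omega)}+\|u_0\|_{L^2(\Omega)})$; passing to the $L^2(\Omega)$-norm via the continuous embedding $E_2$ from \cref{c2} and substituting into the displayed sum produces the second factor of $C_{\pmb\gamma,s}(\lambda)$. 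Combining with the previous display and taking square roots yields the asserted bound.

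\textbf{Main obstacle.} Apart from routine bookkeeping (the choice of $\lambda$, manipulating the weighted sums, tracking the embedding constant into $C_{\pmb\gamma,s}(\lambda)$), the two points requiring genuine care are: (i) verifying that the regularity bound of \cref{lemma:adjointDerivative}, proved for the exact parametric adjoint solution $q$, transfers to the truncated finite element solution $q_{s,h}$ on which the QMC rule actually acts -- this is what makes the norm estimate in the third step available; and (ii) justifying the Banach-space-valued reduction, i.e.\ the membership $\pmb y_{\{1:s\}}\mapsto q_{s,h}(\pmb x,\pmb y_{\{1:s\}},z)\in\mathcal W_{\pmb\gamma,s}$ and the legitimacy of interchanging $\int_\Omega$, $\mathbb E_{\pmb\Delta}$ and the parametric integrals, so that the scalar CBC estimate can be applied pointwise in $\pmb x$.
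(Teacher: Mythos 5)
Your proposal is correct and follows essentially the same route as the paper: exchange $\mathbb E_{\pmb\Delta}$ with $\int_\Omega$ by Fubini, apply the scalar CBC shifted-lattice bound (\cite[Theorem 2.1]{Kuo2012QMCFEM}) pointwise in $\pmb x$, and then bound $\int_\Omega\|q_{s,h}(\pmb x,\cdot,z)\|_{\mathcal W_{\pmb\gamma,s}}^2\,\mathrm d\pmb x$ using the mixed-derivative regularity of the adjoint solution together with the embedding constant $c_2$. Your explicit remark that the bound of \cref{lemma:adjointDerivative} must be (and can be) transferred to the truncated conforming FE solution $q_{s,h}$ is a point the paper's proof uses implicitly without comment, so it is a welcome clarification rather than a deviation.
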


\begin{proof}
We have
\begin{align*}
(e_{s,h,n})^2 &= \mathbb E_{\pmb{\Delta}} \left[ \int_{\Omega} \left| \int_{\Xi_s} q_{s,h}(\pmb x,\pmb y_{\{1:s\}},z)\, \mathrm d\pmb y_{\{1:s\}} - \frac{1}{n} \sum_{i = 1}^n q_{s,h}(\pmb x,\pmb y^{(i)},z) \right|^2 \mathrm d\pmb x \right]\\
&= \int_{\Omega} \mathbb E_{\pmb{\Delta}} \left[ \left| \int_{\Xi_s} q_{s,h}(\pmb x,\pmb y_{\{1:s\}},z)\, \mathrm d\pmb y_{\{1:s\}} - \frac{1}{n} \sum_{i = 1}^n q_{s,h}(\pmb x,\pmb y^{(i)},z) \right|^2 \right]\,\mathrm d\pmb x \\
&\leq \left( \sum_{\emptyset \neq \mathfrak u \subseteq \{1:s\}} \gamma_{{\mathfrak u}}^{\lambda} \left(\frac{2\zeta(2\lambda)}{(2\pi^2)^{\lambda}}\right)^{|{\mathfrak u|}} \right)^{\frac{1}{\lambda}} \left(\frac{2}{n}\right)^{\frac{1}{\lambda}}\int_{\Omega} \|q_{s,h}(\pmb x,\cdot,z)\|_{\mathcal W_{\pmb \gamma,s}}^2\, \mathrm d\pmb x\,,
\end{align*}
where we used Fubini's theorem in the second equality and \cite[Theorem 2.1]{Kuo2012QMCFEM} to obtain the inequality. Now from the definition of the $\mathcal W_{\pmb \gamma,s}$-norm (see \cref{sobolevnorm}), we have
\begin{align*}
&\int_{\Omega} \|q_{s,h}(\pmb x,\cdot,z)\|_{\mathcal W_{\pmb \gamma,s}}^2\, \mathrm d\pmb x\\
&= \int_{\Omega} \sum_{{\mathfrak u} \subseteq \{1:s\}} \frac{1}{\gamma_{{\mathfrak u}}} \int_{\left[-\frac{1}{2},\frac{1}{2}\right]^{|{\mathfrak u}|}} \left( \int_{\left[-\frac{1}{2},\frac{1}{2}\right]^{s-|{\mathfrak u}|}}  \frac{\partial^{|\mathfrak u|}q_{s,h}}{\partial\pmb  y_{{\mathfrak u}}}(\pmb x,(\pmb y_{{\mathfrak u}};\pmb y_{\{1:s\} \setminus {\mathfrak u}}),z)\, \mathrm d\pmb y_{\{1:s\}\setminus {\mathfrak u}} \right)^2\, \mathrm d\pmb y_{{\mathfrak u}}\, \mathrm d\pmb x\\
&\leq \sum_{{\mathfrak u} \subseteq \{1:s\}} \frac{1}{\gamma_{{\mathfrak u}}} \int_{\Omega} \int_{\left[-\frac{1}{2},\frac{1}{2}\right]^{|{\mathfrak u}|}} \int_{\left[-\frac{1}{2},\frac{1}{2}\right]^{s-|{\mathfrak u}|}}  \left( \frac{\partial^{|{\mathfrak u}|}q_{s,h}}{\partial\pmb  y_{{\mathfrak u}}}(\pmb x,(\pmb y_{{\mathfrak u}};\pmb y_{\{1:s\} \setminus {\mathfrak u}}),z) \right)^2\, \mathrm d\pmb y_{\{1:s\}\setminus {\mathfrak u}}\, \mathrm d\pmb y_{{\mathfrak u}}\, \mathrm d\pmb x\\
&= \sum_{{\mathfrak u} \subseteq \{1:s\}} \frac{1}{\gamma_{{\mathfrak u}}} \int_{\Xi_{s}}  \left\| \frac{\partial^{|{\mathfrak u}|}q_{s,h}}{\partial \pmb y_{{\mathfrak u}}}(\cdot,\pmb y_{\{1:s\}},z) \right\|^2_{L^2(\Omega)}\, \mathrm d\pmb y_{\{1:s\}}\\
&\leq c_2 \sum_{{\mathfrak u} \subseteq \{1:s\}} \frac{1}{\gamma_{{\mathfrak u}}} \int_{\Xi_{s}}  \left\| \frac{\partial^{|{\mathfrak u}|}q_{s,h}}{\partial \pmb y_{{\mathfrak u}}}(\cdot,\pmb y_{\{1:s\}},z) \right\|^2_{H_0^1(\Omega)}\, \mathrm d\pmb y_{\{1:s\}}\\
&\leq c_2  \sum_{{\mathfrak u} \subseteq \{1:s\}} \frac{1}{\gamma_{{\mathfrak u}}} \left( (|{\mathfrak u}|+1)!\, C_q\, (\|z\|_{L^2(\Omega)} + \|u_0\|_{L^2(\Omega)}) \prod_{j \in {\mathfrak u}} b_j \right)^2\,,
\end{align*}
where the first inequality uses the Cauchy--Schwarz inequality and the last inequality uses \cref{lemma:adjointDerivative}.
\end{proof}

\begin{remark}\label{remarkU}
From the proof of \cref{theorem:QMCerror} it can easily be seen that we can get an analogous result to \cref{theorem:QMCerror} by replacing $q_{s,h}$ with $u_{s,h}$ and using \cref{lemma:Derivative} instead of \cref{lemma:adjointDerivative} in the last step of the proof.
\end{remark}


\subsection{Optimal weights}\label{section:optimalweights}
In the following we choose weights $\gamma_{{\mathfrak u}}$ so that $C_{\pmb \gamma,s}(\lambda)$ in \cref{theorem:QMCerror} is bounded independently of $s$. To do so we follow and adjust the discussion in \cite{KuoNuyens2018} and therefore assume
\begin{align}\label{assump:psummability}
\sum_{j \geq 1} \|\psi_j\|_{L^{\infty}(\Omega)}^{p} < \infty\,,
\end{align}
for $p \in (0,1)$.

For any $\lambda$, $C_{\pmb \gamma,s}(\lambda)$ is minimized with respect to the weights $\gamma_{{\mathfrak u}}$ by
\begin{align}\label{eq:weights}
\gamma_{{\mathfrak u}} = \Bigg((|{\mathfrak u}|+1)! \prod_{j \in {\mathfrak u}} \frac{b_j}{\big(\frac{2\zeta(2\lambda)}{(2\pi^2)^{\lambda}}\big)^{1/2}} \Bigg)^{2/(1+\lambda)}\,,
\end{align}
see also \cite[Lemma 6.2]{Kuo2012QMCFEM}.
We substitute \cref{eq:weights} into $C_{\pmb \gamma,s}(\lambda)$ and simplify the expression to
\begin{align}\label{rmserror2}
C_{\pmb \gamma,s}(\lambda) =  \left( \sum_{{\mathfrak u} \subseteq \{1:s\}} { \left((|{\mathfrak u}|+1)! \prod_{j \in {\mathfrak u}} b_j \left(\frac{2\zeta(2\lambda)}{(2\pi^2)^{\lambda}}\right)^{1/(2\lambda)} \right)^{2\lambda/(1+\lambda)}} \right)^{(1+\lambda)/(2\lambda)} \,.
\end{align}
Next derive a condition on $\lambda$ for which \cref{rmserror2} is bounded independently of $s$.
Let $\phi := b_j \left(\frac{2\zeta(2\lambda)}{(2\pi^2)^{\lambda}}\right)^{1/(2\lambda)}$ and $k := \frac{2\lambda}{1+\lambda}$, then it holds that
\begin{align*}
\sum_{{\mathfrak u} \subseteq \{1:s\}} \left( (|{\mathfrak u}| + 1)! \prod_{j \in {\mathfrak u}} \phi_j \right)^k = \sum_{l=0}^s ((l+1)!)^k \sum_{\substack{{\mathfrak u} \subseteq \{1:s\}\\ |{\mathfrak u}| = l}} \prod_{j \in {\mathfrak u}} \phi_j^k \leq \sum_{l = 0}^s \frac{((l+1)!)^k}{l!} \left(\sum_{j=1}^s \phi_j^k\right)^l \,.
\end{align*}

With the ratio test we obtain, that the right-hand side is bounded independently of $s$ if $\sum_{j=1}^{\infty} \phi_j^k < \infty$ and $k <1$. We have $\sum_{j=1}^{\infty} \phi_j^k = \left(\frac{2\zeta(2\lambda)}{(2\pi^2)^{\lambda}}\right)^{1/(1+\lambda)} \sum_{j=1}^{\infty} b_j^k < \infty$ if $k \geq p$, where $p$ is the summability exponent in \cref{assump:psummability}.
Thus we require
\begin{align}\label{eq:constraintsonlambda}
p \leq \frac{2\lambda}{1+\lambda} < 1 \quad \Leftrightarrow \quad \frac{p}{2-p} \leq \lambda < 1\,.
\end{align}

Since the best rate of convergence is obtained for $\lambda$ as small as possible, combining \cref{eq:constraintsonlambda} with $\lambda \in \left(\frac{1}{2},1\right]$ yields

\begin{align}\label{eq:choicelambda}
\lambda = 
\begin{cases}
\frac{1}{2-2\delta} & \text{for all } \delta \in \left(0,\frac{1}{2}\right) \text{ if } p \in \left(0,\frac{2}{3} \right]\,,\\
\frac{p}{2-p} &  \hfill \text{ if } p \in \left(\frac{2}{3} ,1\right)\,.
\end{cases}
\end{align}

\begin{theorem}[Choice of the weights]\label{theorem:choiceofweights}
Under assumption \cref{assump:psummability}, the choice of $\lambda$ as in \cref{eq:choicelambda} together with the choice of the weights \cref{eq:weights} ensures that the bound on $e_{s,h,n}$ is finite independently of $s$.
(However, $C_{\pmb \gamma,s}\left(\frac{1}{2-2\delta}\right) \to \infty$ as $\delta \to 0$ and $C_{\pmb \gamma,s}\left(\frac{p}{2-p}\right) \to \infty$ as $p \to (2/3)^+$.) In consequence under assumption \cref{assump:psummability} and the same assumptions as in \cref{theorem:QMCerror}, the root-mean-square error in \cref{theorem:QMCerror} is of order
\begin{align}
\kappa(n) := 
\begin{cases}
n^{-(1-\delta)} & \text{for all } \delta \in \left(0,\frac{1}{2}\right)  \text{ if } p \in \left(0,\frac{2}{3} \right]\,,\\
n^{-(1/p - 1/2)} & \hfill  \text{ if } p \in \left(\frac{2}{3} ,1\right)\,.
\end{cases}\label{kappa}
\end{align}
\end{theorem}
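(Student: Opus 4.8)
The plan is to assemble the pieces that have already been prepared in the text immediately preceding the statement. First I would recall that, by the optimization of $C_{\pmb \gamma,s}(\lambda)$ over the weights, substituting the minimizer \cref{eq:weights} yields the closed form \cref{rmserror2}, and that with the abbreviations $\phi_j := b_j\big(2\zeta(2\lambda)/(2\pi^2)^\lambda\big)^{1/(2\lambda)}$ and $k := 2\lambda/(1+\lambda)\in(0,1)$ one has
\begin{align*}
C_{\pmb \gamma,s}(\lambda)^{k} \,=\, \sum_{{\mathfrak u}\subseteq\{1:s\}}\Big((|{\mathfrak u}|+1)!\prod_{j\in{\mathfrak u}}\phi_j\Big)^{k} \,\le\, \sum_{l=0}^{s}\frac{((l+1)!)^{k}}{l!}\Big(\sum_{j=1}^{s}\phi_j^{k}\Big)^{l}\,.
\end{align*}
Hence it suffices to bound the right-hand side by a quantity that is finite and independent of $s$, namely the infinite series $\sum_{l\ge 0}\frac{((l+1)!)^{k}}{l!}\,T^{l}$ with $T:=\sum_{j\ge 1}\phi_j^{k}$.

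Next I would check the two conditions that guarantee this infinite series converges. Since $\phi_j^{k}=\big(2\zeta(2\lambda)/(2\pi^2)^\lambda\big)^{1/(1+\lambda)}b_j^{k}$ and $\{b_j\}_{j\ge 1}\in\ell^{p}$ by \cref{assump:psummability}, we obtain $T<\infty$ as soon as $k\ge p$ (because $b_j\to 0$, so $b_j^{k}\le b_j^{p}$ for all sufficiently large $j$). For convergence in $l$, the ratio of consecutive terms is $\frac{((l+2)!)^{k}/(l+1)!}{((l+1)!)^{k}/l!}\,T=\frac{(l+2)^{k}}{l+1}\,T\to 0$ as $l\to\infty$ precisely because $k<1$, so the ratio test applies. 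The two requirements $p\le k<1$ translate, via $k=2\lambda/(1+\lambda)$, into $\tfrac{p}{2-p}\le\lambda<1$; intersecting this with the range $\lambda\in(\tfrac12,1]$ permitted by \cref{theorem:QMCerror} and then choosing $\lambda$ as small as possible (the best convergence rate) gives exactly \cref{eq:choicelambda}: for $p\in(0,\tfrac23]$ the binding constraint is $\lambda>\tfrac12$, so $\lambda=\tfrac{1}{2-2\delta}$ works for every small $\delta$, while for $p\in(\tfrac23,1)$ the binding constraint is $\lambda\ge\tfrac{p}{2-p}$. This establishes that $e_{s,h,n}$ is bounded independently of $s$.

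For the convergence rate I would simply insert the now $s$-independent bound on $C_{\pmb \gamma,s}(\lambda)$ into the estimate of \cref{theorem:QMCerror}, giving $e_{s,h,n}\le C\,n^{-1/(2\lambda)}$ with $C$ depending only on $C_q$, $c_2$, $\|z\|_{L^2(\Omega)}$, $\|u_0\|_{L^2(\Omega)}$; substituting $\lambda=\tfrac{1}{2-2\delta}$ yields the exponent $\tfrac{1}{2\lambda}=1-\delta$, and $\lambda=\tfrac{p}{2-p}$ yields $\tfrac{1}{2\lambda}=\tfrac1p-\tfrac12$, which is \cref{kappa}. For the parenthetical blow-up statements I would retain only the singleton term ${\mathfrak u}=\{1\}$ in \cref{rmserror2}, producing the lower bound $C_{\pmb \gamma,s}(\lambda)\ge 2\phi_1=2b_1\big(2\zeta(2\lambda)/(2\pi^2)^\lambda\big)^{1/(2\lambda)}$; since $\zeta(2\lambda)\to\infty$ as $2\lambda\to 1^{+}$, this lower bound diverges as $\delta\to 0$ (so $\lambda\to\tfrac12^{+}$) and likewise as $p\to(\tfrac23)^{+}$ (so again $\lambda=\tfrac{p}{2-p}\to\tfrac12^{+}$).

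The main obstacle is not really analytic — the chain of inequalities is routine given the preparatory lemmas — but rather careful bookkeeping: one must confirm that the constant absorbed into $C$ in \cref{theorem:QMCerror} genuinely carries no $s$-dependence, and that the two admissible intervals for $\lambda$ intersect as claimed for every $p\in(0,1)$, including the boundary case $p=\tfrac23$. The slightly more delicate point is the blow-up assertion, which requires exhibiting an explicit divergent lower bound for $C_{\pmb \gamma,s}(\lambda)$ rather than merely noting that the upper bound degenerates.
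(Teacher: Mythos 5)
Your proposal is correct and follows essentially the same route as the paper, whose ``proof'' of this theorem is precisely the discussion in \cref{section:optimalweights}: substitute the minimizing weights, bound the resulting sum via the multinomial/ratio-test argument with $k=2\lambda/(1+\lambda)$, derive the constraint $p\le k<1$, intersect with $\lambda\in(\tfrac12,1]$, and take $\lambda$ as small as possible. Your only addition is the explicit singleton lower bound $C_{\pmb\gamma,s}(\lambda)\ge 2\phi_1$ justifying the parenthetical blow-up claim, which the paper merely asserts; that argument is valid (assuming $b_1>0$) since $\zeta(2\lambda)\to\infty$ as $\lambda\to\tfrac12^{+}$.
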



\subsection{Combined error and convergence rates}
\label{subsection56}

Combining the results of the preceding subsections gives the following theorem.

\begin{theorem}[Combined error]\label{theorem:combinederror}
Let $z^*$ be the unique solution of \cref{eq:3.1} and $z^*_{s,h,n}$ the unique solution of \cref{eq:QMCFEobjective}. Then under the assumptions of \cref{theorem:Truncationerror}, \cref{theorem:FEapprox}, \cref{theorem:QMCerror} and \cref{theorem:choiceofweights}, we have
\begin{align*}\label{eq:finalresult1}
\sqrt{\mathbb E_{\pmb{\Delta}}[\|z^*-z^*_{s,h,n}\|_{L^2(\Omega)}^2]} \leq \frac{C}{\alpha} (\|z^*\|_{L^2(\Omega)} + \|u_0\|_{L^2(\Omega)})\left( s^{-\frac{2}{p}+1} + h^2 + \kappa(n) \right)\,,
\end{align*}
where $\kappa(n)$ is given in \cref{kappa}.
\end{theorem}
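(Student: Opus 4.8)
The plan is to assemble the three component estimates already in hand. Fix the random shift $\pmb{\Delta}\in[0,1]^s$, which fixes the lattice points $\pmb y^{(i)}$; then \cref{theorem:theorem51} applied with the control $z=z^*$ gives the pathwise bound
\[
\|z^*-z^*_{s,h,n}\|_{L^2(\Omega)} \le \frac{1}{\alpha}\left\|\int_{\Xi} q(\cdot,\pmb y,z^*)\,\mathrm d\pmb y - \frac{1}{n}\sum_{i=1}^n q_{s,h}(\cdot,\pmb y^{(i)},z^*)\right\|_{L^2(\Omega)}.
\]
Inserting the splitting \cref{eq:errorexpansion} (with $z=z^*$), squaring, taking $\mathbb E_{\pmb{\Delta}}[\cdot]$, and applying Minkowski's inequality in the Bochner space $L^2\big([0,1]^s;L^2(\Omega)\big)$ yields
\[
\sqrt{\mathbb E_{\pmb{\Delta}}\big[\|z^*-z^*_{s,h,n}\|_{L^2(\Omega)}^2\big]} \le \frac{1}{\alpha}\big(T_{\mathrm{trunc}}+T_{\mathrm{FE}}+T_{\mathrm{QMC}}\big),
\]
where $T_{\mathrm{trunc}}$ and $T_{\mathrm{FE}}$ are the $L^2(\Omega)$-norms of the (deterministic, $\pmb{\Delta}$-independent) truncation and finite element terms of \cref{eq:errorexpansion} -- so the expectation acts trivially on them -- and $T_{\mathrm{QMC}}=e_{s,h,n}$ is precisely the root-mean-square QMC error of \cref{theorem:QMCerror}.

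The second step substitutes the known bounds. \cref{theorem:Truncationerror} with $z=z^*$ gives $T_{\mathrm{trunc}}\le C\,(\|z^*\|_{L^2(\Omega)}+\|u_0\|_{L^2(\Omega)})\,s^{-(2/p-1)}$; the truncated form of \cref{theorem:FEapprox} noted immediately after that theorem (again with $z=z^*$) gives $T_{\mathrm{FE}}\le C\,h^2(\|z^*\|_{L^2(\Omega)}+\|u_0\|_{L^2(\Omega)})$; and \cref{theorem:QMCerror} combined with \cref{theorem:choiceofweights} gives $T_{\mathrm{QMC}}=e_{s,h,n}\le \sqrt{c_2}\,C_{\pmb{\gamma},s}(\lambda)\,(2/n)^{1/(2\lambda)}\,C_q(\|z^*\|_{L^2(\Omega)}+\|u_0\|_{L^2(\Omega)})$, where by the choice of weights \cref{eq:weights} and of $\lambda$ in \cref{eq:choicelambda} the prefactor $C_{\pmb{\gamma},s}(\lambda)$ is bounded independently of $s$ and $(2/n)^{1/(2\lambda)}=O(\kappa(n))$ with $\kappa(n)$ as in \cref{kappa}.

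The final step is bookkeeping: absorb $\sqrt{c_2}$, $C_q$, and the ($s$-, $h$-, $n$-independent) constants above into a single constant $C$, so that
\[
\sqrt{\mathbb E_{\pmb{\Delta}}\big[\|z^*-z^*_{s,h,n}\|_{L^2(\Omega)}^2\big]} \le \frac{C}{\alpha}\big(\|z^*\|_{L^2(\Omega)}+\|u_0\|_{L^2(\Omega)}\big)\big(s^{-2/p+1}+h^2+\kappa(n)\big).
\]
No genuine difficulty arises -- the statement is a synthesis of \cref{theorem:Truncationerror,theorem:FEapprox,theorem:QMCerror,theorem:choiceofweights} -- so the only points that need care are that \cref{theorem:theorem51} holds for each fixed realization of $\pmb{\Delta}$ (legitimizing the squaring and the expectation), that the truncation and finite element contributions are deterministic while only the QMC contribution carries the randomness, and that $z=z^*$ is a fixed admissible control with finite $L^2(\Omega)$-norm, so the hypotheses of the component theorems apply verbatim.
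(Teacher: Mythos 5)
Your proposal is correct and follows essentially the same route as the paper: apply \cref{theorem:theorem51} pathwise, insert the splitting \cref{eq:errorexpansion}, separate the deterministic truncation and FE terms from the random QMC term, and invoke \cref{theorem:Truncationerror}, the truncated form of \cref{theorem:FEapprox}, \cref{theorem:QMCerror} and \cref{theorem:choiceofweights}. The only cosmetic difference is that you combine the three contributions via the triangle (Minkowski) inequality in $L^2\big([0,1]^s;L^2(\Omega)\big)$, whereas the paper squares first and uses an elementary sum-of-squares bound before taking square roots; both are valid and the constants are absorbed into $C$ either way.
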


\begin{proof}
Squaring \cref{eq:convergence} and using the expansion \cref{eq:errorexpansion} we get by taking expectation with respect to the random shift $\pmb \Delta$
\begin{align*}
\mathbb E_{\pmb{\Delta}} \left[ \|z^*-z^*_{s,h,n}\|^2_{L^2(\Omega)} \right] 
&\leq  \frac{2}{\alpha^2} \left\| \int_{\Xi} \left(q(\cdot,\pmb y,z^*) - q_s(\cdot,\pmb y,z^*)\right)\, \mathrm d\pmb y \right\|_{L^2(\Omega)}^2\\ 
&+ \frac{2}{\alpha^2} \left\| \int_{\Xi_s} \left(q_s(\cdot,\pmb y_{\{1:s\}},z^*) - q_{s,h}(\cdot,\pmb y_{\{1:s\}},z^*)\right)\, \mathrm d\pmb y_{\{1:s\}} \right\|_{L^2(\Omega)}^2\\
&\!\!\!\!\!\!\!\!\!+  \frac{1}{\alpha^2} \mathbb E_{\pmb{\Delta}} \left[ \left\| \int_{\Xi_s} q_{s,h}(\cdot,\pmb y_{\{1:s\}},z^*)\, \mathrm d\pmb y_{\{1:s\}} - \frac{1}{n} \sum_{i = 1}^{n} q_{s,h}(\cdot,\pmb y^{(i)},z^*) \right\|_{L^2(\Omega)}^2\right]  \,.
\end{align*}
The result then immediately follows from \cref{theorem:Truncationerror}, \cref{theorem:FEapprox} and \cref{theorem:QMCerror}.
\end{proof}

Using the error bound for the control $z_{s,h,n}^*$ in \cref{theorem:combinederror} we obtain an error estimate for the state $u_{s,h}(\cdot,\pmb y,z_{s,h,n}^*)$ in the following corollary.

\begin{corollary}\label{finalcorollary}
Let $z^*$ be the unique solution of \cref{eq:3.1} and $z^*_{s,h,n}$ the unique solution of \cref{eq:QMCFEobjective}, then under the assumptions of \cref{theorem:combinederror} we have
\begin{align*}
&\sqrt{\mathbb E_{\pmb{\Delta}}\left[ \int_\Xi \|u(\cdot,\pmb y,z^*) - u_{s,h}(\cdot,\pmb y,z_{s,h,n}^*)\|^2_{L^2(\Omega)}\, \mathrm d\pmb y \right] }\\ &\quad \quad \leq C (\|z^*\|_{L^2(\Omega)} + \|u_0\|_{L^2(\Omega)})\left( s^{-\left(\frac{1}{p}-1\right)} + h^2 + \kappa(n) \right)\,,
\end{align*}
where $\kappa(n)$ is given in \cref{kappa}.
\end{corollary}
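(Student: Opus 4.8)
The plan is to insert the two intermediate states $u_s(\cdot,\pmb y,z^*)$ and $u_{s,h}(\cdot,\pmb y,z^*)$ and split the error. The quantity on the left is exactly the norm of $u(\cdot,\pmb y,z^*)-u_{s,h}(\cdot,\pmb y,z_{s,h,n}^*)$ in the Bochner space $L^2\big([0,1]^s\times\Xi;L^2(\Omega)\big)$ (with $[0,1]^s$ carrying the uniform shift distribution and $\Xi$ the measure $\mu$), so it obeys the triangle inequality. Writing
\[
u(\cdot,\pmb y,z^*)-u_{s,h}(\cdot,\pmb y,z_{s,h,n}^*)=\big(u(\cdot,\pmb y,z^*)-u_s(\cdot,\pmb y,z^*)\big)+\big(u_s(\cdot,\pmb y,z^*)-u_{s,h}(\cdot,\pmb y,z^*)\big)+\big(u_{s,h}(\cdot,\pmb y,z^*)-u_{s,h}(\cdot,\pmb y,z_{s,h,n}^*)\big)
\]
splits it into a \emph{truncation} term, a \emph{finite element} term, and a \emph{control} term. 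Since $z^*$ is independent of the random shift, the first two terms are deterministic, so for them it suffices to bound $\sqrt{\int_\Xi\|\cdot\|_{L^2(\Omega)}^2\,\mathrm d\pmb y}$, which in turn is dominated by $\sup_{\pmb y\in\Xi}\|\cdot\|_{L^2(\Omega)}$.

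For the truncation term I would subtract the weak forms \cref{eq:33b} satisfied by $u$ and by $u_s$ to obtain $b(\pmb y;u-u_s,v)=\int_\Omega\big(a(\cdot,\pmb y_{\{1:s\}})-a(\cdot,\pmb y)\big)\nabla u_s\cdot\nabla v\,\mathrm d\pmb x$ for all $v\in H_0^1(\Omega)$, test with $v=u-u_s$, and use coercivity together with $\|a(\cdot,\pmb y_{\{1:s\}})-a(\cdot,\pmb y)\|_{L^\infty(\Omega)}\le\frac12\sum_{j\ge s+1}\|\psi_j\|_{L^\infty(\Omega)}$ and the \emph{a priori} bound of \cref{theorem:theorem1} for $\|u_s(\cdot,\pmb y,z^*)\|_{H_0^1(\Omega)}$. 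This yields, uniformly in $\pmb y$, $\|u(\cdot,\pmb y,z^*)-u_s(\cdot,\pmb y,z^*)\|_{H_0^1(\Omega)}\le\frac{c_1\|z^*\|_{L^2(\Omega)}}{2a_{\min}}\sum_{j\ge s+1}b_j$; combining with \cref{c2} and the tail estimate $\sum_{j\ge s+1}b_j\lesssim s^{-(1/p-1)}$ (from \cite{Kuo2012QMCFEM}, as already quoted in the proof of \cref{theorem:Truncationerror}) bounds the truncation term by $C\|z^*\|_{L^2(\Omega)}\,s^{-(1/p-1)}$. For the finite element term, estimate \cref{eq:combFE1} from the proof of \cref{theorem:FEapprox}, which holds verbatim for the truncated problem with $u,u_h$ replaced by $u_s,u_{s,h}$, gives $\|u_s(\cdot,\pmb y,z^*)-u_{s,h}(\cdot,\pmb y,z^*)\|_{L^2(\Omega)}\le C h^2\|z^*\|_{L^2(\Omega)}$ uniformly in $\pmb y$, hence bounds the finite element term by $C h^2\|z^*\|_{L^2(\Omega)}$.

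For the control term, linearity of the discrete solution operator gives $u_{s,h}(\cdot,\pmb y,z^*)-u_{s,h}(\cdot,\pmb y,z_{s,h,n}^*)=S_{\pmb y_{\{1:s\}},h}(z^*-z_{s,h,n}^*)$, and the uniform operator bound $\|S_{\pmb y_{\{1:s\}},h}\|_{\mathcal L(L^2(\Omega))}\le c_1c_2/a_{\min}$ established in the proof of \cref{theorem:FEapprox} yields $\|u_{s,h}(\cdot,\pmb y,z^*)-u_{s,h}(\cdot,\pmb y,z_{s,h,n}^*)\|_{L^2(\Omega)}\le\frac{c_1c_2}{a_{\min}}\|z^*-z_{s,h,n}^*\|_{L^2(\Omega)}$ uniformly in $\pmb y$ (the right-hand side still depends on the shift through $z_{s,h,n}^*$). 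Since $\mu$ is a probability measure, integrating over $\Xi$, taking $\mathbb E_{\pmb\Delta}$, and invoking \cref{theorem:combinederror} bounds the control term by $\frac{C}{\alpha}(\|z^*\|_{L^2(\Omega)}+\|u_0\|_{L^2(\Omega)})\big(s^{-(2/p-1)}+h^2+\kappa(n)\big)$. Finally, $0<p<1$ gives $2/p-1\ge 1/p-1$, so $s^{-(2/p-1)}\le s^{-(1/p-1)}$ for $s\ge1$; summing the three contributions and absorbing $\|z^*\|_{L^2(\Omega)}$ into $\|z^*\|_{L^2(\Omega)}+\|u_0\|_{L^2(\Omega)}$ gives the claimed estimate.

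The computations are routine reuse of earlier estimates; the one point requiring care is that the $L^2(\Xi)$-norm of the state truncation error decays only like $s^{-(1/p-1)}$ — not the faster $s^{-(2/p-1)}$ of \cref{theorem:Truncationerror}, which concerns the norm of the \emph{integral} of the difference rather than the integral of the norm — and one must check that this slower rate is consistent with, and in fact dominates, the $s^{-(2/p-1)}$ rate carried over from the control error via \cref{theorem:combinederror}.
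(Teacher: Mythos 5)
Your proposal is correct and follows essentially the same route as the paper: the same three-term splitting into truncation, finite element, and control contributions, with the FE term handled by \cref{eq:combFE1}, the control term by the uniform $L^2$-boundedness of the discrete solution operator combined with \cref{theorem:combinederror}, and the observation that $s^{-(2/p-1)}\le s^{-(1/p-1)}$. The only cosmetic difference is that you re-derive the pointwise-in-$\pmb y$ state truncation bound by subtracting the weak forms, whereas the paper simply cites \cite[Theorem 5.1]{Kuo2012QMCFEM} for the same estimate.
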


\begin{proof}
We observe that the error in $u_{s,h}(\cdot,\pmb y,z_{s,h,n}^*)$ compared to $u(\cdot,\pmb y,z^*)$ has three different sources, which can be estimated separately as follows
\begin{align*}
\|u(\cdot,\pmb y,z^*) - u(\cdot,\pmb y,z_{s,h,n}^*)\|_{L^2(\Omega)} 
&\leq \|u(\cdot,\pmb y, z^*) - u_s(\cdot,\pmb y,z^*)\|_{L^2(\Omega)}\\ 
&\qquad + \|u_s(\cdot,\pmb y,z^*) - u_{s,h}(\cdot,\pmb y,z^*)\|_{L^2(\Omega)}\\
&\qquad + \|u_{s,h}(\cdot,\pmb y,z^*) - u_{s,h}(\cdot,\pmb y,z_{s,h,n}^*)\|_{L^2(\Omega)}\\
&\leq \tilde{C}_1\, \|z^*\|_{L^2(\Omega)}\, s^{-\left(\frac{1}{p}-1\right)}
+ \tilde{C}_2\, \|z^*\|_{L^2(\Omega)}\, h^2\\
&\qquad + \frac{c_1\,c_2}{a_{\min}}\, \|z^* - z_{s,h,n}^*\|_{L^2(\Omega)}\,,
\end{align*}
where the bound for the first summand follows from \cite[Theorem 5.1]{Kuo2012QMCFEM}, the bound for the second summand follows from \cref{eq:combFE1} and the bound for the last summand can be obtained using \cref{theorem:theorem1}. Squaring both sides, taking expectation with respect to $\pmb y$ and with respect to the random shifts $\pmb{\Delta}$ and \cref{theorem:combinederror} gives the result.
\end{proof}
From the proof of \cref{finalcorollary} it can easily be seen that its statement remains true if the integral with respect to $\pmb y$ is replaced by the supremum over all $\pmb y \in \Xi$. In \cref{finalcorollary}, in contrast to \cref{theorem:Truncationerror}, we do not obtain the enhanced rate of convergence $s^{-(2/p-1)}$ for the dimension truncation. This is due to the difference in the order of application of the integral (with respect to $\pmb y$) and the $L^2(\Omega)$-norm.


\section{Numerical experiments}

\noindent We consider the coupled PDE system \cref{eq:adjointparametricweakproblem,eq:33b}
in the two-dimensional physical domain $\Omega=(0,1)^2$ equipped with the diffusion coefficient \cref{eq:1.9}.
We set $\bar{a}(\bsx)\equiv 1$ as the mean field and use the parametrized family of fluctuations
\begin{align}
\psi_{j}(\bsx)=\frac{1}{(k_{j}^2+\ell_{j}^2)^\vartheta}\sin(\pi k_{j}x_1)\sin(\pi \ell_{j}x_2)\quad\text{for}~\vartheta>1~\text{and}~j\in\mathbb{N},\label{eq:fluctuation}
\end{align}
where the sequence $(k_{j},\ell_{j})_{j\geq1}$ is an ordering of the elements of $\mathbb{N} \times \mathbb{N}$, so that the sequence $(\|\psi_j\|_{L^\infty(\Omega)})_{j\geq 1}$ is non-increasing. This implies that $\|\psi_{j}\|_{L^\infty(\Omega)}\sim j^{-\vartheta}$ as $j\to\infty$ by Weyl's asymptotic law for the spectrum of the Dirichlet Laplacian (cf.~\cite{weyl} as well as the examples in ~\cite{DKGS,Gantner}). 
We use a first order finite element solver to compute the solutions to the system~\cref{eq:adjointparametricweakproblem,eq:33b}
~numerically over an ensemble of regular hierarchical FE meshes $\{\mathcal{T}_h\}_h$ of the square domain $\Omega$, parametrized using the one-dimensional mesh widths $h\in\{2^{-k}: k\in\mathbb{N}\}$.

In the numerical experiments in \cref{subsectionNum1} to \cref{subsectionNum3}, we fix the source term $z(\bsx)=x_2$ and set $u_0(\bsx)=x_1^2-x_2^2$ for $\bsx=(x_1,x_2)\in \Omega$. The lattice QMC rule was generated in all experiments by using the fast CBC implementation of the QMC4PDE software~\cite{KN,Kuo2016ApplicationOQ}, with the weights chosen to appropriately accommodate the fluctuations~\eqref{eq:fluctuation} in accordance with \cref{theorem:choiceofweights}. In particular, we note that while all the lattice rules in the subsequent numerical examples were designed with the adjoint solution $q$ in mind, the same lattice rules have been used in the sequel to analyze the behavior of the state solution $u$ of \cref{eq:33b}
~as well. All computations were carried out on the Katana cluster at UNSW Sydney.

\subsection{Finite element error}
\label{subsectionNum1}
In this section, we assess the validity of the finite element error bounds given in \cref{theorem:FEapprox}.

Two numerical experiments were carried out:
\begin{itemize}
\item[(a)] The $L^2$ errors $\|u_s(\cdot,\bsy,z)-u_{s,h}(\cdot,\bsy,z)\|_{L^2(\Omega)}$ and $\|q_s(\cdot,\bsy,z)-q_{s,h}(\cdot,\bsy,z)\|_{L^2(\Omega)}$ of the FE solutions to the state and adjoint PDEs, respectively, were computed using the parameters $s=100$ and $h\in\{2^{-k}:k\in\{1,\ldots,9\}\}$ for a \emph{single} realization of the parametric vector $\bsy\in [-1/2,1/2]^{100}$ drawn from $U([-1/2,1/2]^{100})$.
\item[(b)] The terms  $\big\|\int_{\Xi_s}(u_s(\cdot,\bsy,z)-u_{s,h}(\cdot,\bsy,z))\,{\rm d}\bsy\big\|_{L^2(\Omega)}$ and $\big\|\int_{\Xi_s}(q_s(\cdot,\bsy,z)-\linebreak[4]q_{s,h}(\cdot,\bsy,z))\,{\rm d}\bsy\big\|_{L^2(\Omega)}$ were approximated by using a lattice rule with a single fixed random shift to evaluate the parametric integrals with dimensionality $s=100$, $n=2^{15}$ nodes and mesh width $h\in\{2^{-k}:k\in\{1,\ldots,6\}\}$.
\end{itemize}
The value $\vartheta=2.0$ was used in both experiments as the rate of decay for the fluctuations~\eqref{eq:fluctuation}. As the reference solutions $u_s$ and $q_s$, we used FE solutions computed using the mesh width $h=2^{-10}$ for experiment (a) and $h=2^{-7}$ for experiment (b). The $L^2$ errors were computed by interpolating the coarser FE solutions onto the grid corresponding to the reference solution. The numerical results are displayed in \cref{fig:femerr}. In the case of a single fixed vector $\bsy\in[-1/2,1/2]^{100}$, we obtain the rates $\mathcal{O}(h^{2.01688})$ and $\mathcal{O}(h^{2.00542})$ for the state and adjoint solutions, respectively. The corresponding rates averaged over $n=2^{15}$ lattice quadrature nodes are $\mathcal{O}(h^{2.04011})$ for the state PDE and $\mathcal{O}(h^{2.01617})$ for the adjoint PDE. In both cases, the observed rates adhere nicely with the theoretical rates given in \cref{theorem:FEapprox}.

\begin{figure}[!h]
\centering
\subfloat[]{{\includegraphics[height=.45\textwidth]{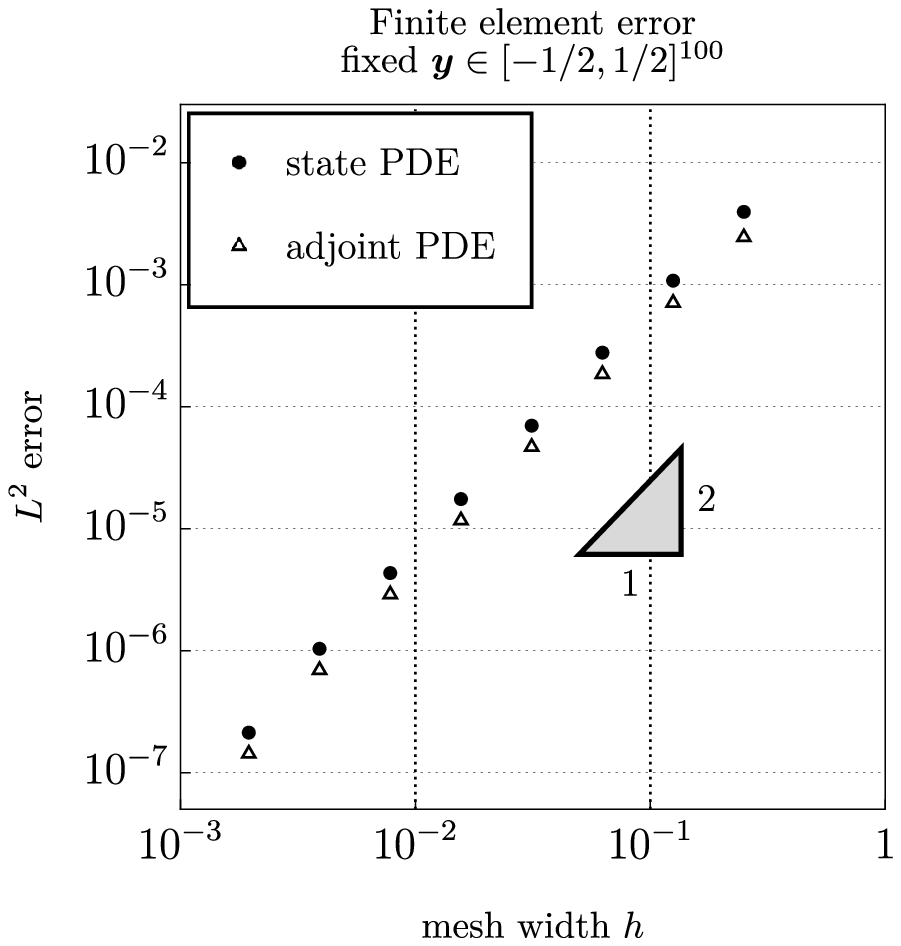}}}\quad\subfloat[]{{\includegraphics[height=.45\textwidth]{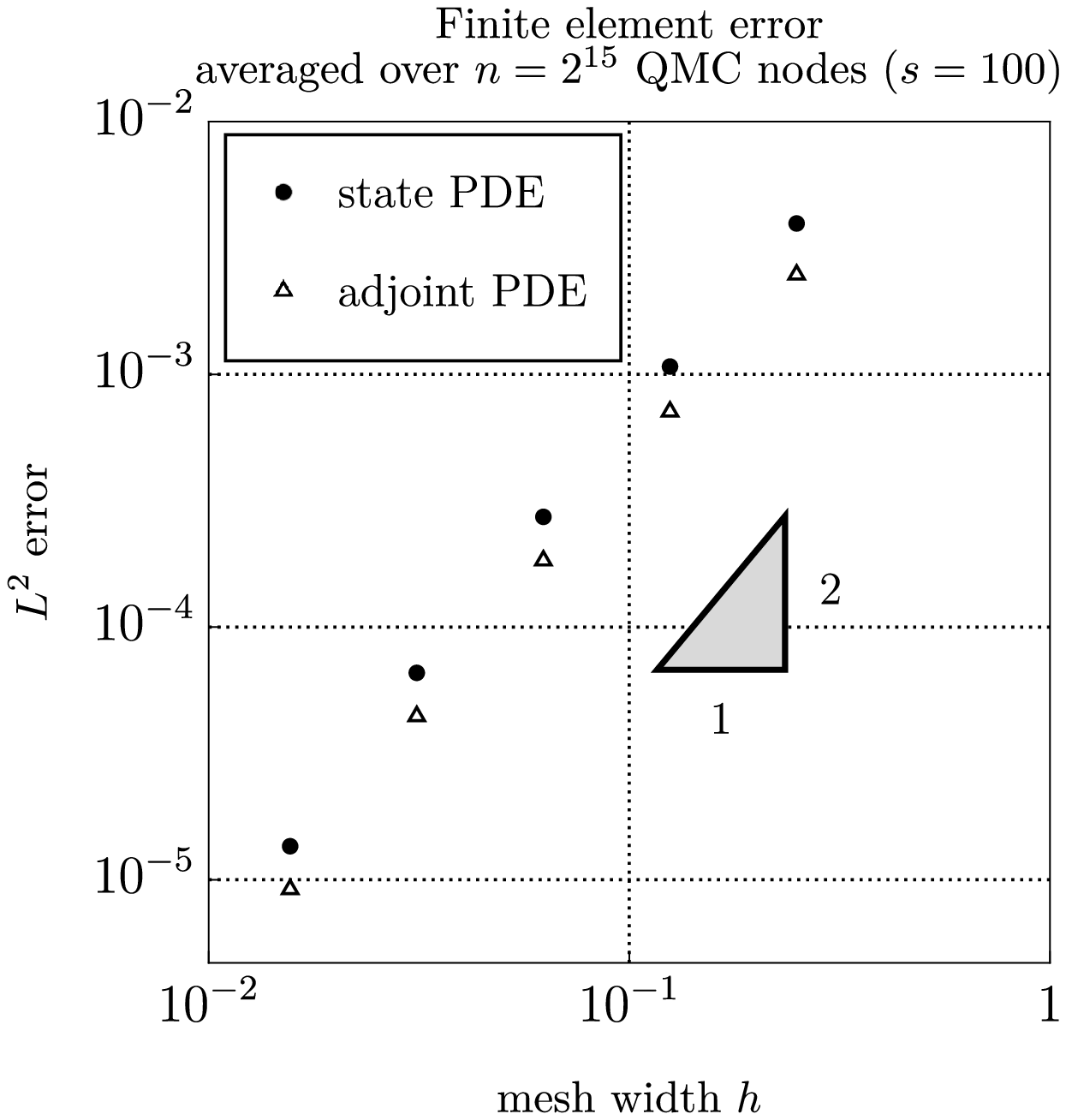}}}
\caption{The computed finite element errors displayed against the theoretical rates.}\label{fig:femerr}
\end{figure}

\subsection{Dimension truncation error}\label{subsectionNum2}
The dimension truncation error was estimated by approximating the quantities
$$
\bigg\|\int_{\Xi}(u(\cdot,\bsy,z)-u_s(\cdot,\bsy,z))\,{\rm d}\bsy\bigg\|_{L^2(\Omega)}\quad\text{and}\quad \bigg\|\int_{\Xi}(q(\cdot,\bsy,z)-q_s(\cdot,\bsy,z))\,{\rm d}\bsy\bigg\|_{L^2(\Omega)}
$$ 
using a lattice quadrature rule with $n=2^{15}$ nodes and a single fixed random shift to evaluate the parametric integrals. The coupled PDE system was discretized using the mesh width $h=2^{-5}$ and, as the reference solutions $u$ and $q$, we used the FE solutions corresponding to the parameters $s=2^{11}$ and $h=2^{-5}$. The obtained results are displayed in \cref{fig:dimtrunc} for the fluctuation operators $(\psi_{j})_{j\geq 1}$ corresponding to the decay rates $\vartheta\in\{1.5,2.0\}$ and dimensions $s\in\{2^k:k\in\{1,\ldots,9\}\}$. The numerical results are accompanied by the corresponding theoretical rates, which are $\mathcal{O}(s^{-2})$ for $\vartheta=1.5$ and $\mathcal{O}(s^{-3})$ for $\vartheta=2.0$ according to \cref{theorem:Truncationerror}. 

In all cases, we find that the observed rates tend  toward the expected rates as $s$ increases. In particular, by carrying out a least squares fit for the data points corresponding to the values $s\in\{2^5,\ldots,2^9\}$, the calculated dimension truncation error rate for the state PDE is $\mathcal{O}(s^{-2.00315})$ (corresponding to the decay rate $\vartheta=1.5$) and $\mathcal{O}(s^{-2.83015})$ (corresponding to the decay rate $\vartheta=2.0$). For the adjoint PDE, the corresponding rates are $\mathcal{O}(s^{-2.0065})$ and $\mathcal{O}(s^{-2.72987})$, respectively. The discrepancy between the obtained rate and the expected rate in the case of the decay parameter $\vartheta=2.0$ may be explained by two factors: the lattice quadrature error rate is at best linear, so the quadrature error is likely not completely eliminated with $n=2^{15}$ lattice quadrature points. Moreover, the rate obtained in \cref{theorem:Truncationerror} is sharp only for potentially high values of $s$. This phenomenon may also be observed in the slight curvature of the data presented in \cref{fig:dimtrunc}.

\begin{figure}[!h]
\centering
\subfloat{{\includegraphics[height=.45\textwidth]{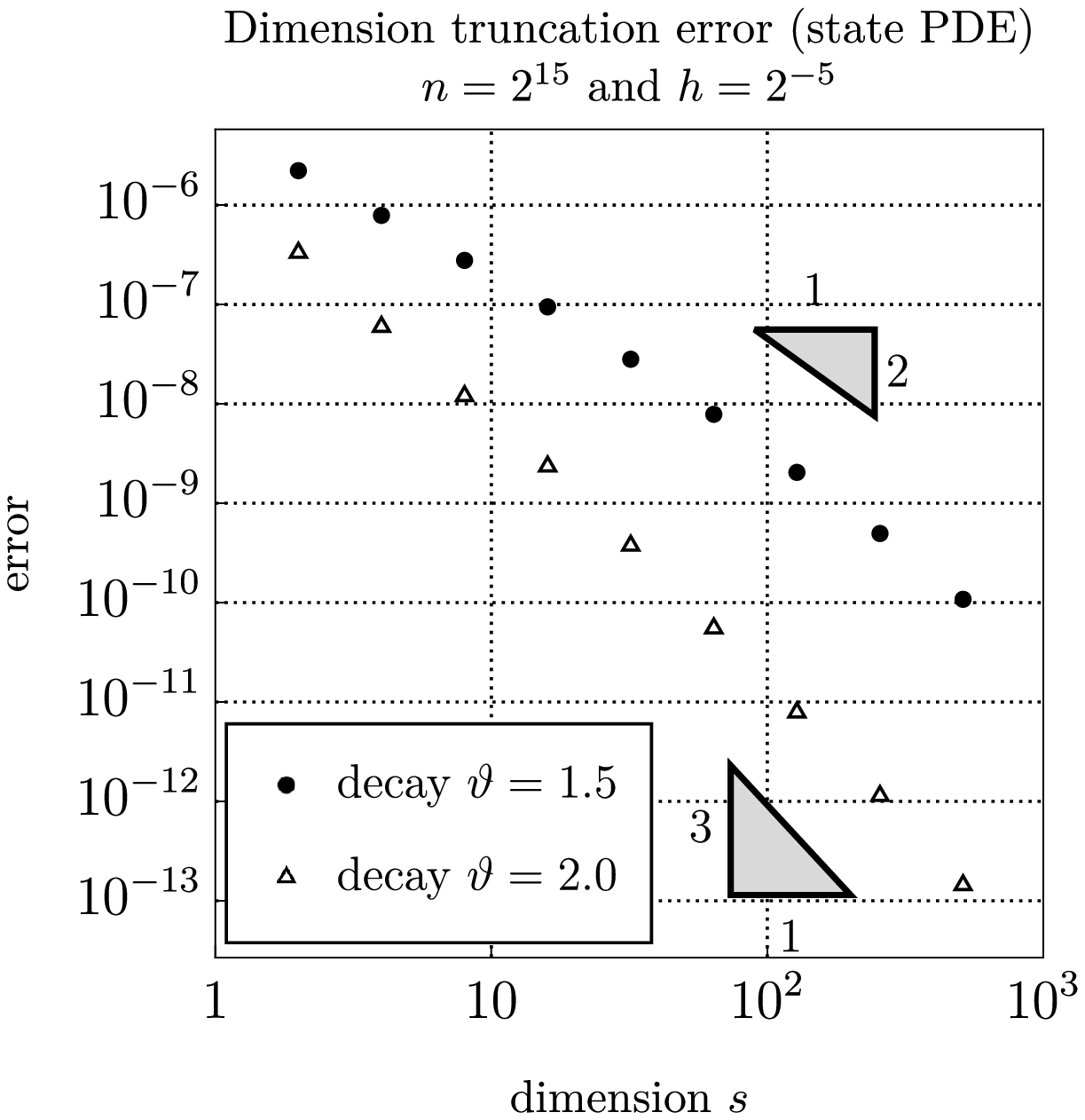}}}\qquad\subfloat{{\includegraphics[height=.45\textwidth]{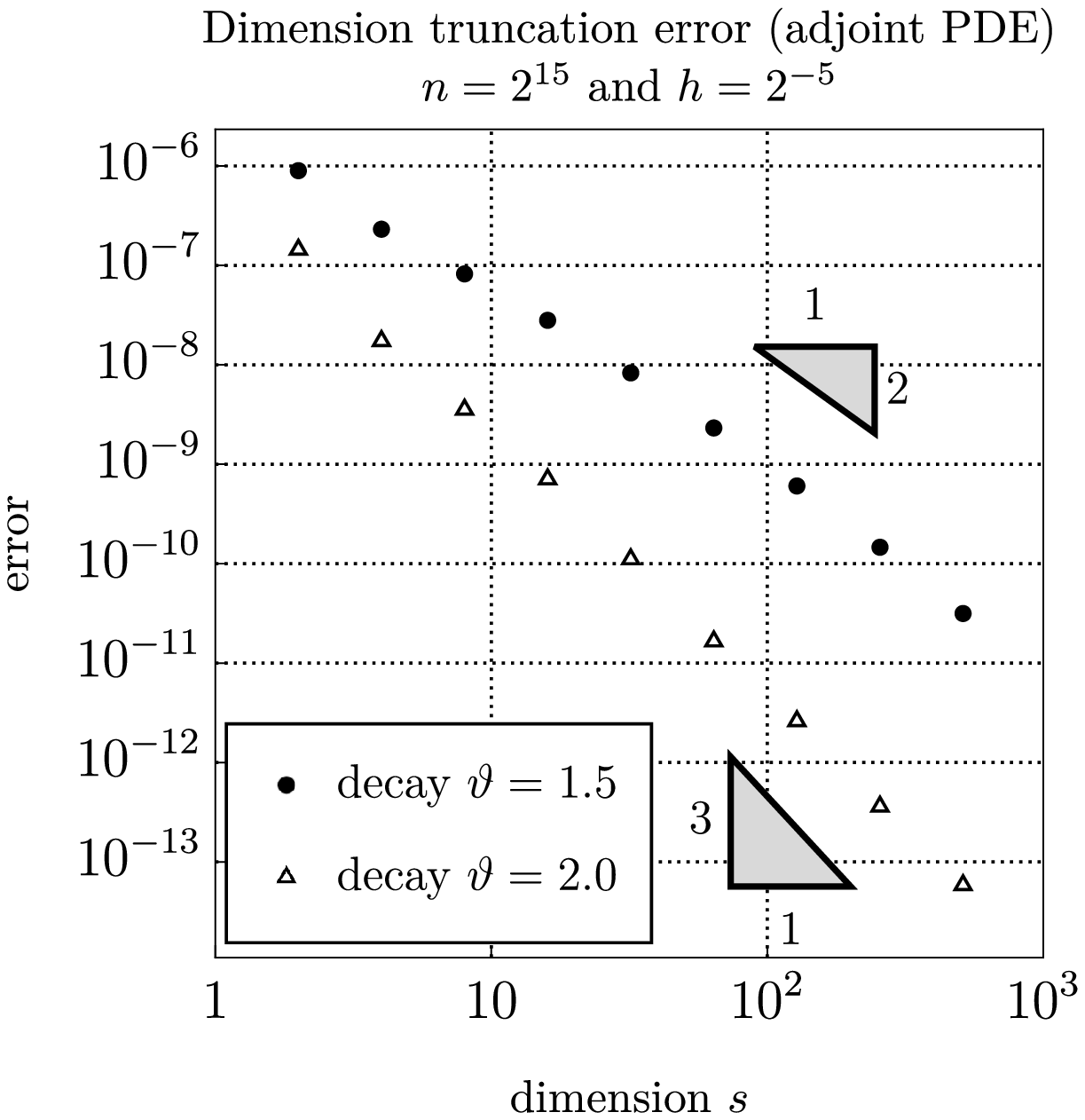}}}
\caption{The computed dimension truncation errors displayed against the expected rates.}\label{fig:dimtrunc}
\end{figure}

\subsection{QMC error}\label{subsectionNum3}
We assess the rate in \cref{theorem:QMCerror} by using the root-mean-square approximation
\begin{align*}
&\sqrt{\mathbb{E}_{\boldsymbol{\Delta}}\bigg\|\int_{\Xi_s}q_{s,h}(\cdot,\bsy_{\{1:s\}},z)\,{\rm d}\bsy_{\{1:s\}}-\frac{1}{n}\sum_{i=1}^nq_{s,h}(\cdot,\{\boldsymbol{t}^{(i)}+\boldsymbol{\Delta}\}-\tfrac{\boldsymbol{1}}{\boldsymbol{2}},z)\bigg\|_{L^2(\Omega)}^2}\\
&\approx \sqrt{\frac{1}{R(R-1)}\sum_{r=1}^R\big\|\overline{Q} - Q^{(r)}\big\|_{L^2(\Omega)}^2}\,,
\end{align*}
where
$Q^{(r)} := \frac{1}{n}\sum_{i=1}^n q_{s,h}(\cdot,\{\boldsymbol{t}^{(i)}+\boldsymbol{\Delta}^{(r)}\}-\tfrac{\boldsymbol{1}}{\boldsymbol{2}},z) 
$ and $
\overline{Q} = \frac{1}{R}\sum_{r=1}^R Q^{(r)}$,
for a randomly shifted lattice rule with $n=2^m$, $m\in\{7,\ldots,15\}$, lattice points $(\boldsymbol{t}^{(i)})_{i=1}^n$ in $[0,1]^s$ and $R=16$ random shifts $\boldsymbol{\Delta}^{(r)}$ drawn from $U([0,1]^s)$ with $s=100$. The FE  solutions were computed using the mesh width $h=2^{-6}$. The results are displayed in \cref{fig:qmc}. In both cases, the theoretical rate is $\mathcal{O}(n^{-1+\delta})$, $\delta>0$. For the decay rate $\vartheta=1.5$, we observe the rates $\mathcal{O}(n^{-0.984193})$ for the state PDE and $\mathcal{O}(n^{-0.987608})$ for the adjoint PDE. When the decay rate is $\vartheta=2.0$, we obtain the rates $\mathcal{O}(n^{-1.01080})$ and $\mathcal{O}(n^{-1.012258})$ for the state and adjoint PDE, respectively.

\begin{figure}[!h]
\centering
\subfloat{{\includegraphics[height=.45\textwidth]{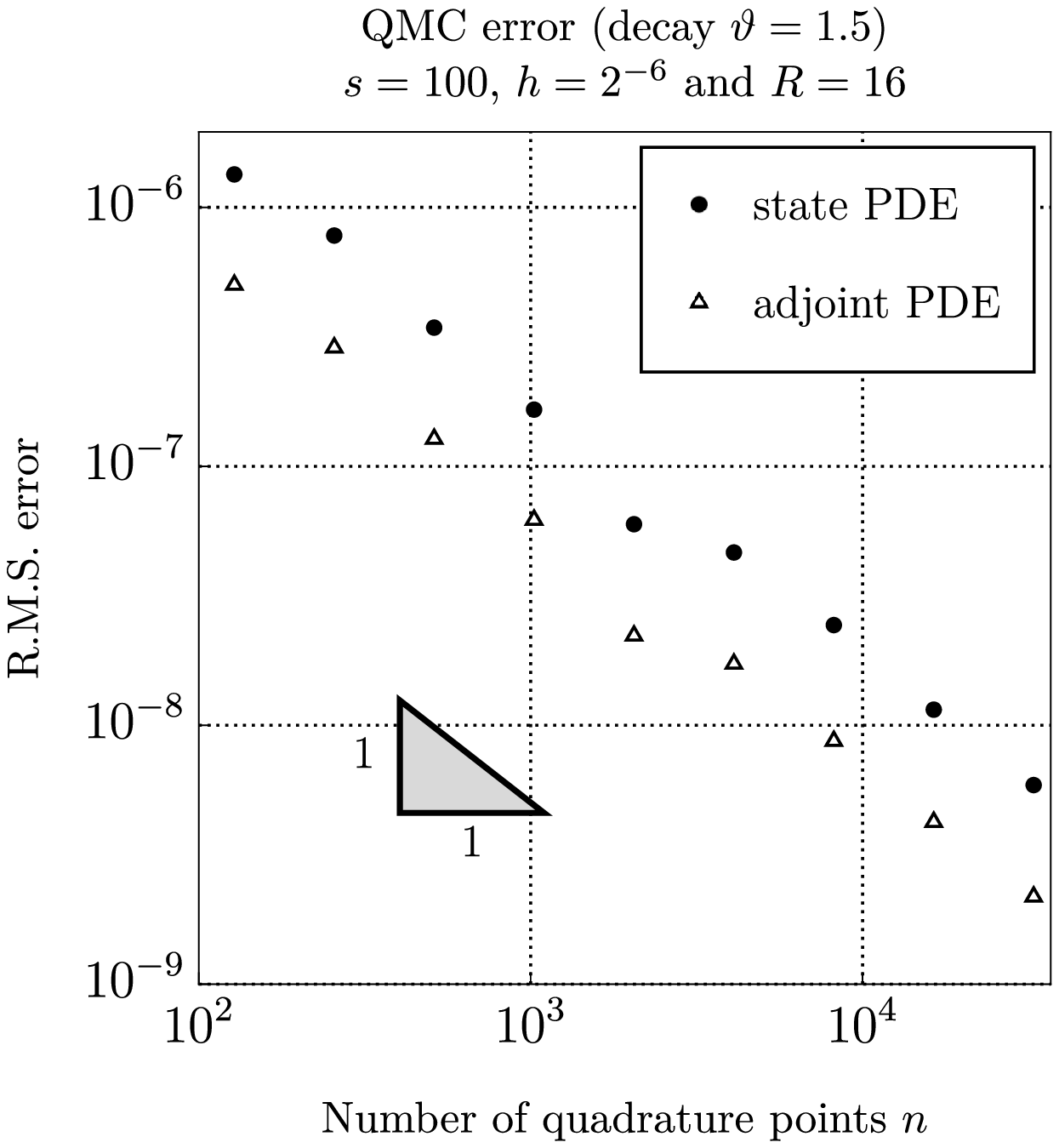}}}\qquad\subfloat{{\includegraphics[height=.45\textwidth]{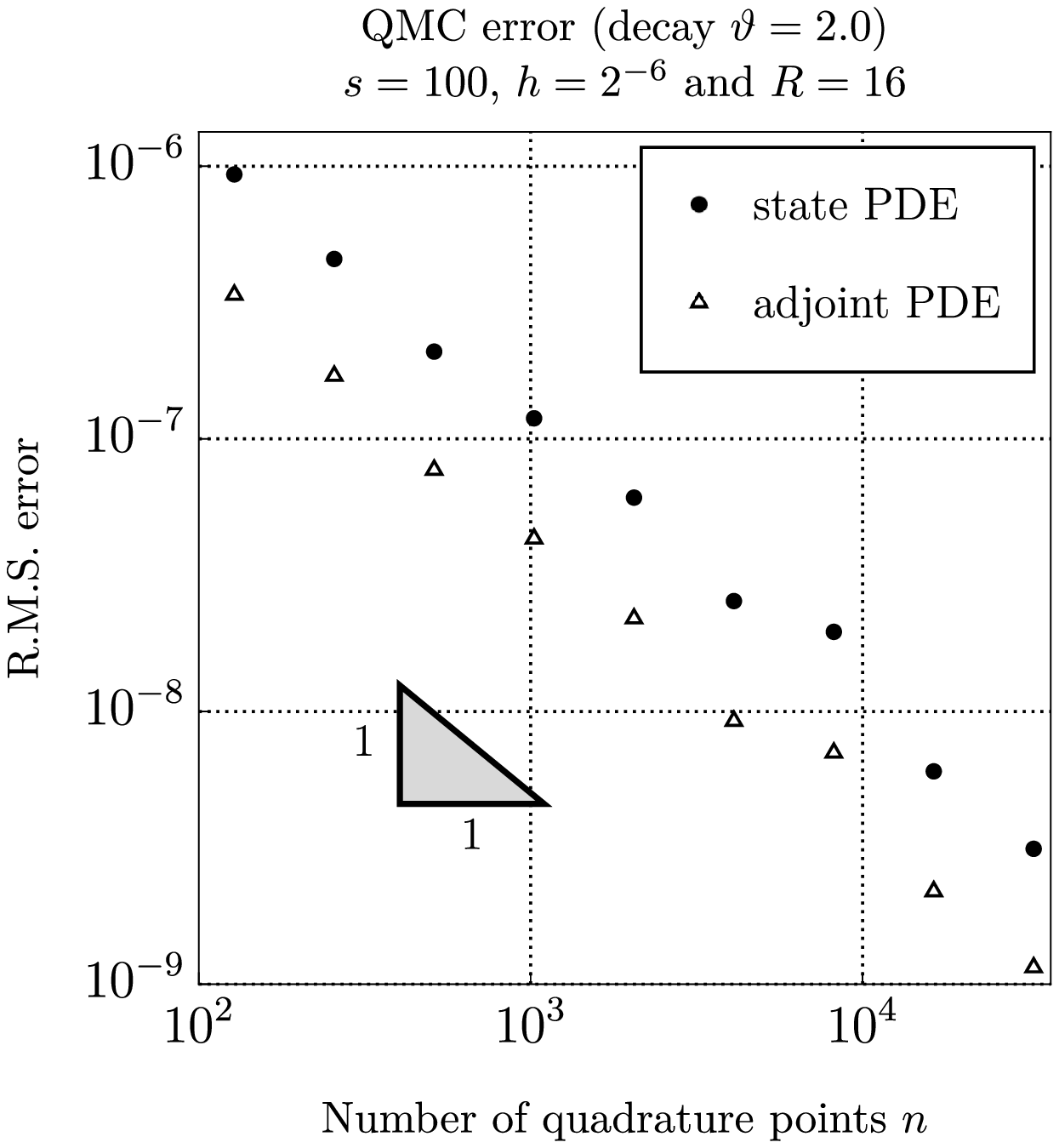}}}
\caption{The computed root-mean-square errors for the randomly shifted lattice rules.}\label{fig:qmc}
\end{figure}

\subsection{Optimal control problem}\label{subsectionNum4}

We consider the problem of finding the optimal control $z\in\mathcal{Z}$ that minimizes the functional \cref{eq:1.1}
subject to the PDE constraints~\cref{eq:1.2,eq:1.3}. 
~We choose $u_0(\bsx)=x_1^2-x_2^2$, set $\vartheta=1.5$, and fix the space of admissible controls $\mathcal{Z}= \{z \in L^2(\Omega)\,:\, z_{\min} \leq z \leq z_{\max}\, \text{ a.e.~in }\Omega\}$ with 
$$
z_{\min}(\pmb x) =  
\begin{cases}
0 & \pmb x \in \left[\tfrac{1}{8},\tfrac{3}{8}\right] \times \left[\tfrac{5}{8},\tfrac{7}{8}\right],\\
0 & \pmb x \in \left[\tfrac{5}{8},\tfrac{7}{8}\right] \times \left[\tfrac{5}{8},\tfrac{7}{8}\right],\\
-1 & \text{otherwise}
\end{cases}
\qquad \text{and} \qquad
z_{\max}(\pmb x) =  
\begin{cases}
0 & \pmb x \in \left[\tfrac{1}{8},\tfrac{3}{8}\right] \times \left[\tfrac{1}{8},\tfrac{3}{8}\right],\\
0 & \pmb x \in \left[\tfrac{5}{8},\tfrac{7}{8}\right] \times \left[\tfrac{1}{8},\tfrac{3}{8}\right],\\
1 & \text{otherwise}.
\end{cases}
$$ We use finite elements with mesh width $h=2^{-6}$ to discretize the spatial domain $\Omega=(0,1)^2$. The integrals over the parametric domain $\Xi$ are discretized using a lattice rule with a single fixed random shift with $n=2^{15}$ points and  the truncation dimension $s=2^{12}$.

We consider the regularization parameters $\alpha\in\{0.1,0.01\}$ for the minimization problem. To minimize the discretized target functional, we use the projected gradient descent algorithm (\cref{alg:projected gradient descent}) in conjunction with the projected Armijo rule (\cref{alg:projected Armijo}) with $\gamma = 10^{-4}$ and $\beta = 0.5$. For both experiments, we used $z_0(\bsx)=P_{\mathcal Z}(x_2)$ as the initial guess and track the averaged least square difference of the state $u$ and the target state $u_0$. The results 
are displayed in \cref{fig:frechet1}. 
We observe that for a larger value of $\alpha$ the algorithm converges faster and the averaged difference between the state $u$ and the target state $u_0$ increases.

\begin{figure}[!h]
\centering
\subfloat{{\includegraphics[height=.45\textwidth]{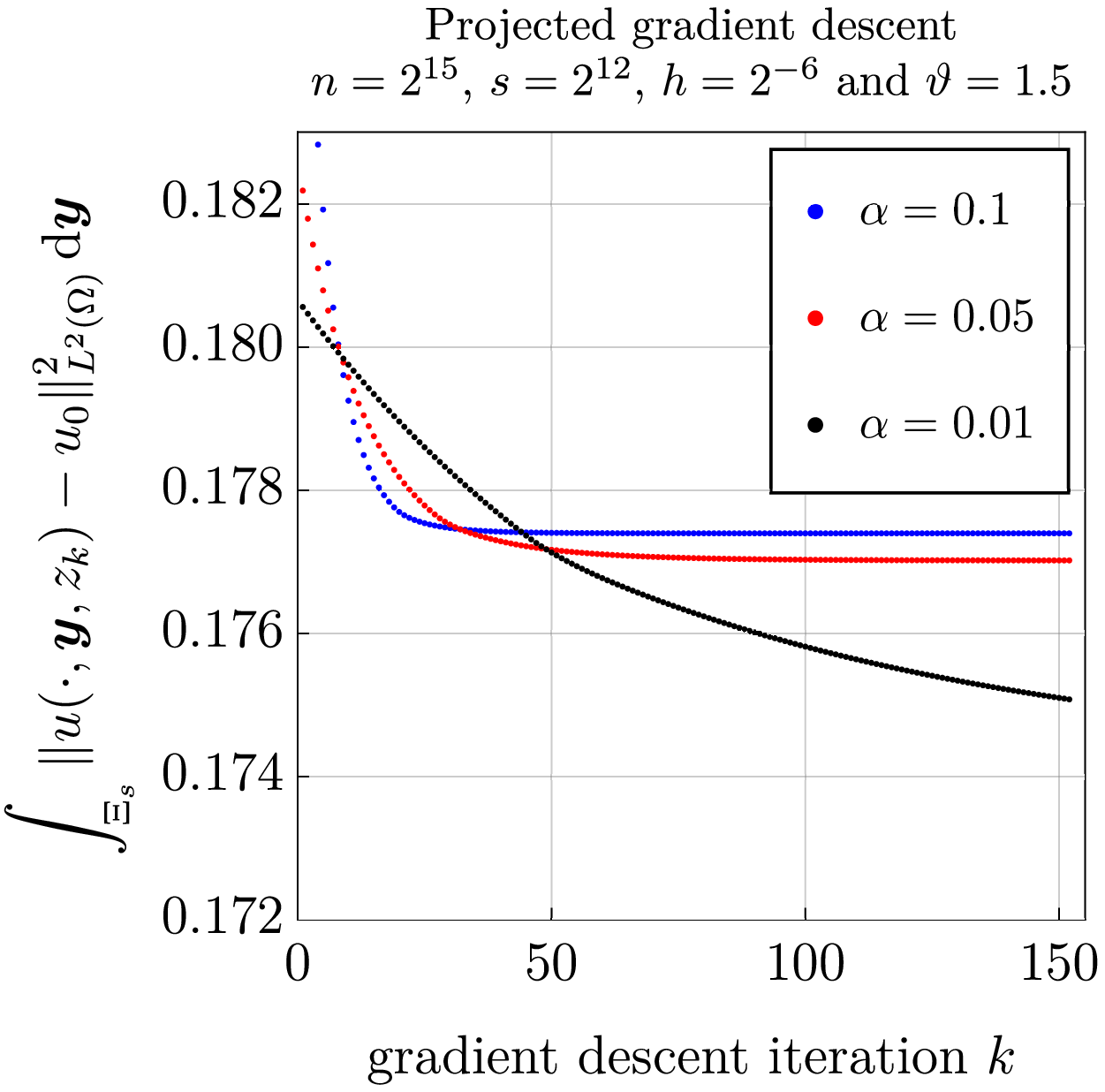}}}\qquad\subfloat{{\includegraphics[width=.45\textwidth]{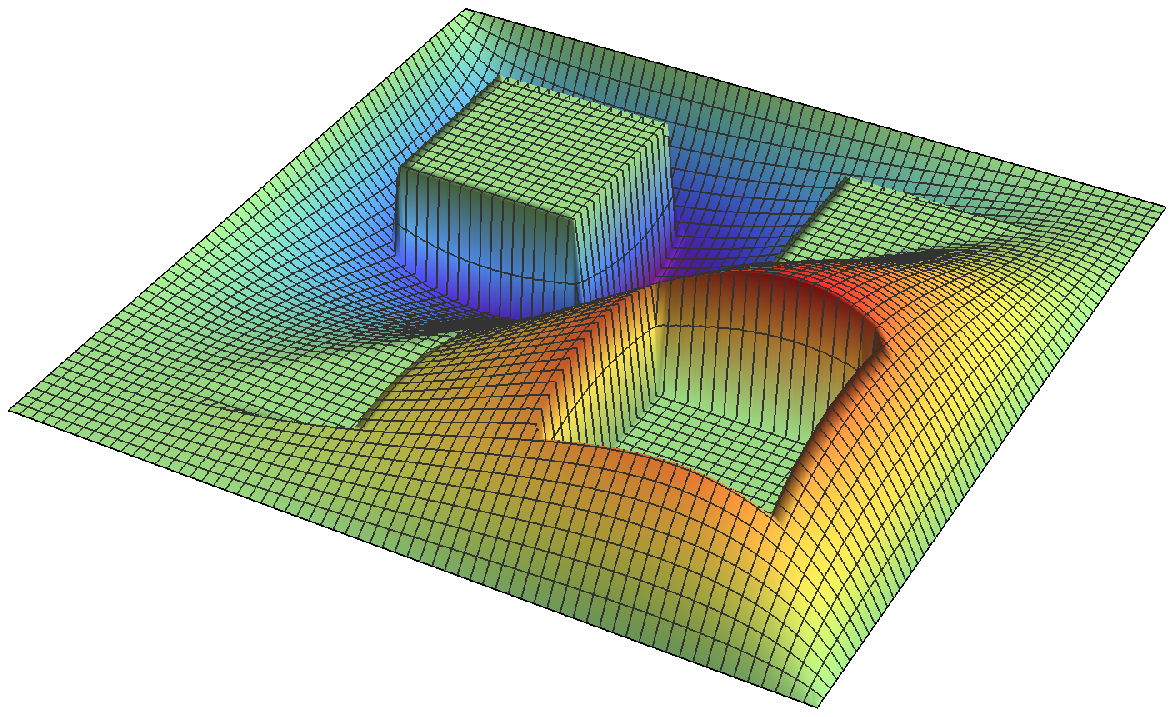}}}
\caption{Left: Averaged least square difference of the state $u$ and the target state $u_0$ at each step of the projected gradient descent algorithm for different values of the regularization parameter $\alpha$. Right: The control corresponding to $\alpha = 0.1$ after $152$ projected gradient descent iterations.}\label{fig:frechet1}
\end{figure}

The same behaviour is observed in the unconstrained case with $\mathcal{Z} = L^2(\Omega)$. We fix the same parameters as before and use the gradient descent algorithm \cref{alg:gradient descent} together with the Armijo rule \cref{alg:Armijo} with $\gamma = 10^{-4}$ and $\beta = 0.5$. We choose $z_0(\bsx)=x_2$ as the initial guess and track the averaged least square difference of the state $u$ and the target state $u_0$. The results 
are displayed in \cref{fig:frechet2}.
\begin{figure}[!h]
\centering
\subfloat{{\includegraphics[height=.45\textwidth]{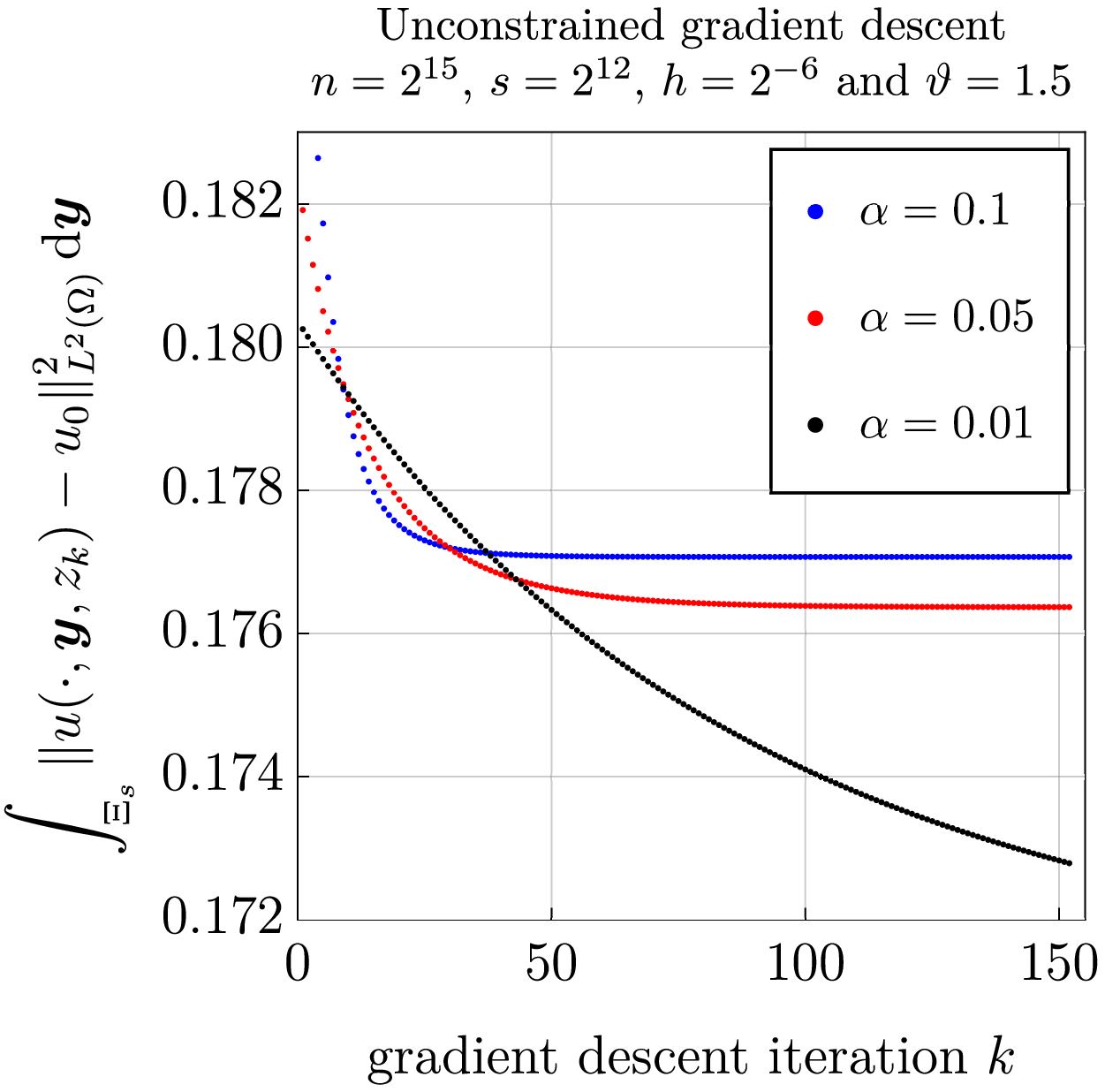}}}\qquad\subfloat{{\includegraphics[width=.45\textwidth]{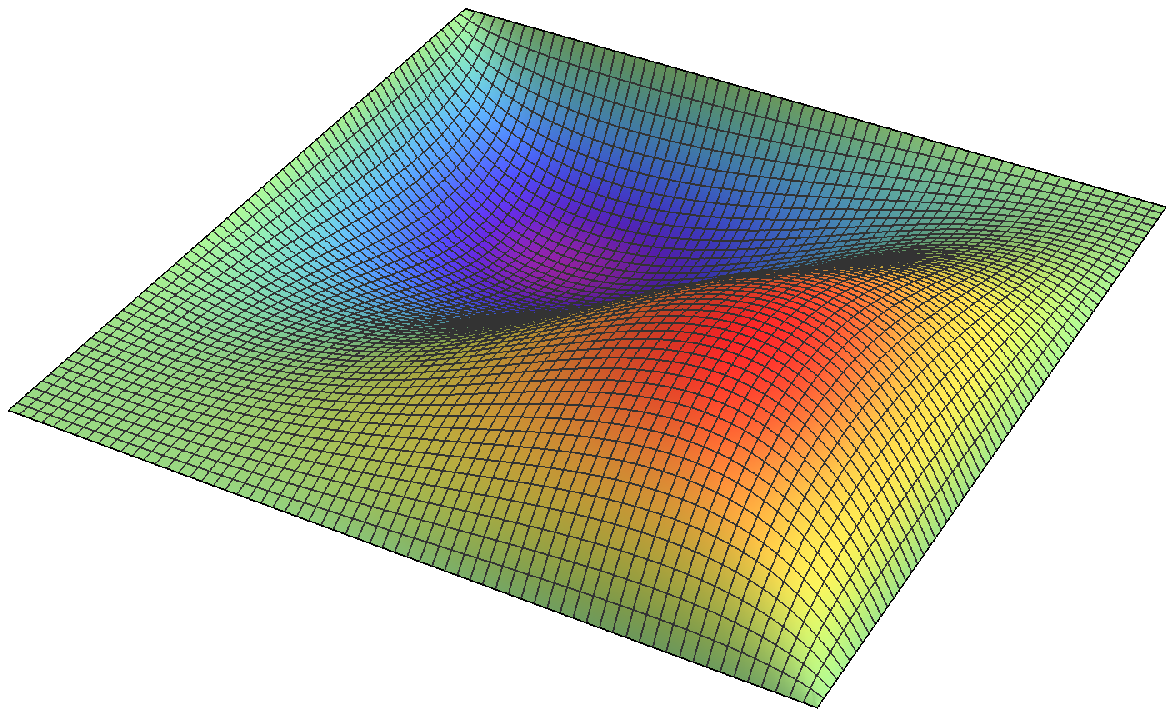}}}
\caption{Left: Averaged least square difference of the state $u$ and the target state $u_0$ at each step of the gradient descent algorithm for different values of the regularization parameter $\alpha$. Right: The control corresponding to $\alpha = 0.1$ after $152$ gradient descent iterations.}\label{fig:frechet2}
\end{figure}


\section{Conclusion and future work}

We presented a specially designed quasi-Monte Carlo method for the robust optimal control problem. Our proposed method provides error bounds for the approximation of the stochastic integral, which do not depend on the number of uncertain variables. Moreover, the method results in faster convergence rates compared to Monte Carlo methods. In addition our method preserves the convexity structure of the optimal control problem due to the nonnegative (equal) quadrature weights. Moreover we presented error estimates and convergence rates for the dimension truncation and the finite element discretization together with confirming numerical experiments.

Based on this work and motivated by \cite{vanBarelVandewalle}, multilevel \cite{AUH,KSSSU,KSS2015} and multi-index \cite{DFS} strategies can be developed in order to further decrease the computational burden. Furthermore the regularity results of this work can be used for the application of higher order QMC rules \cite{DKGNS}. Depending on the application it may also be of interest to consider different objective functions, e.g., the conditional value-at-risk, a combination of the expected value and the variance or different regularization terms. In addition it remains to extend the theory to a class of different forward problems such as affine parametric operator equations \cite{KunothSchwab,KunothSchwab2,Schwab} and different random fields as coefficients of the PDE system \cite{GKNSSS,KKS,KSSSU}. Other possible improvements include more sophisticated optimization algorithms such as Newton based methods.

\appendix

\bibliographystyle{siamplain}
\bibliography{references}

\begin{thebibliography}{10}

\bibitem{AUH}
{\sc A.~A. Ali, E.~Ullmann, and M.~Hinze}, {\em {M}ultilevel {M}onte {C}arlo
  analysis for optimal control of elliptic {PDE}s with random coefficients},
  SIAM/ASA J. Uncertain. Quantif., 5 (2017), pp.~466--492.

\bibitem{BSSW}
{\sc A.~Borz{\`i}, V.~Schulz, C.~Schillings, and G.~V. Winckel}, {\em {O}n the
  treatment of distributed uncertainties in {PDE}-constrained optimization},
  GAMM-Mit., 33 (2010), pp.~230--246.

\bibitem{ChenGhattas}
{\sc P.~Chen and O.~Ghattas}, {\em Sparse polynomial approximations for affine
  parametric saddle point problems}, 2018.
\newblock arXiv:1809.10251 [math.NA].

\bibitem{CVG}
{\sc P.~Chen, U.~Villa, and O.~Ghattas}, {\em {T}aylor approximation and
  variance reduction for {PDE}-constrained optimal control problems under
  uncertainty}, J. Comput. Phys., 385 (2019), pp.~163--186.

\bibitem{Ciarlet}
{\sc P.~G. Ciarlet}, {\em The {F}inite {E}lement {M}ethod for {E}lliptic
  {P}roblems}, North-Holland, 1978.

\bibitem{CohenDeVoreSchwab}
{\sc A.~Cohen, R.~DeVore, and C.~Schwab}, {\em {C}onvergence rates of best
  {N}-term {G}alerkin approximations for a class of elliptic s{PDE}s}, Found.
  Comput. Math., 10 (2010), pp.~615--646.

\bibitem{DFS}
{\sc J.~Dick, M.~Feischl, and C.~Schwab}, {\em {I}mproved {E}fficiency of a
  {M}ulti-{I}ndex {FEM} for {C}omputational {U}ncertainty {Q}uantification},
  2018.
\newblock arXiv:1806.04159 [math.NA].

\bibitem{DKGNS}
{\sc J.~Dick, F.~Y. Kuo, Q.~T.~L. Gia, D.~Nuyens, and C.~Schwab}, {\em {H}igher
  order {QMC} {G}alerkin discretization for parametric operator equations},
  SIAM J. Numer. Anal., 52 (2014), pp.~2676--2702.

\bibitem{DKGS}
{\sc J.~Dick, F.~Y. Kuo, Q.~T.~L. Gia, and C.~Schwab}, {\em {M}ultilevel higher
  order {QMC} {P}etrov--{G}alerkin discretization for affine parametric
  operator equations}, SIAM J. Numer. Anal., 54 (2016), pp.~2541--2568.

\bibitem{Gantner}
{\sc R.~N. Gantner}, {\em Dimension truncation in {QMC} for affine-parametric
  operator equations}, in Monte Carlo and Quasi-Monte Carlo Methods 2016, A.~B.
  Owen and P.~W. Glynn, eds., Stanford, CA, August 14--19, 2018, pp.~249--264.

\bibitem{Gilbarg}
{\sc D.~Gilbarg and N.~S. Trudinger}, {\em {E}lliptic {P}artial {D}ifferential
  {E}quations of {S}econd {O}rder}, Springer-Verlag, 2nd~ed., 2001.

\bibitem{GKNSSS}
{\sc I.~G. Graham, F.~Y. Kuo, J.~A. Nichols, R.~Scheichl, C.~Schwab, and I.~H.
  Sloan}, {\em {Q}uasi-{M}onte {C}arlo finite element methods for elliptic
  {PDE}s with lognormal random coefficients}, Numer. Math., 131 (2015),
  pp.~329--368.

\bibitem{HinzePinnauUlbrich}
{\sc M.~Hinze, R.~Pinnau, M.~Ulbrich, and S.~Ulbrich}, {\em Optimization with
  PDE Constraints}, Springer Netherlands, 2009.

\bibitem{KKS}
{\sc V.~Kaarnioja, F.~Y. Kuo, and I.~H. Sloan}, {\em {U}ncertainty
  quantification using periodic random variables}, 2019.
\newblock arXiv:1905.07693 [math.NA].

\bibitem{SparseGrids}
{\sc D.~P. Kouri}, {\em {A} {M}ultilevel {S}tochastic {C}ollocation {A}lgorithm
  for {O}ptimization of {PDE}s with {U}ncertain {C}oefficients}, SIAM/ASA J.
  Uncertain. Quantif., 2 (2014), pp.~55--81.

\bibitem{KHRB}
{\sc D.~P. Kouri, M.~Heinkenschloss, D.~Ridzal, , and B.~G. van
  Bloemen~Waanders}, {\em {A} {T}rust-{R}egion {A}lgorithm with {A}daptive
  {S}tochastic {C}ollocation for {PDE} {O}ptimization under {U}ncertainty},
  SIAM J. Sci. Comput., 35 (2013), pp.~A1847--A1879.

\bibitem{KouriSurowiec}
{\sc D.~P. Kouri and T.~M. Surowiec}, {\em {R}isk-averse {PDE}-constrained
  optimization using the conditional value-at-risk}, SIAM J. Optim., 26 (2016),
  pp.~365--396.

\bibitem{KunothSchwab}
{\sc A.~Kunoth and C.~Schwab}, {\em {A}nalytic {R}egularity and {GPC}
  {A}pproximation for {C}ontrol {P}roblems {C}onstrained by {L}inear
  {P}arametric {E}lliptic and {P}arabolic {PDE}s}, SIAM J. Control Optim., 51
  (2013), pp.~2442--2471.

\bibitem{KunothSchwab2}
{\sc A.~Kunoth and C.~Schwab}, {\em {S}parse adaptive tensor {G}alerkin
  approximations of stochastic {PDE}-constrained control problems}, SIAM/ASA J.
  Uncertain. Quantif., 4 (2016), pp.~1034--1059.

\bibitem{KN}
{\sc F.~Y. Kuo and D.~Nuyens}, {\em Qmc4pde software}.
\newblock \url{https://people.cs.kuleuven.be/~dirk.nuyens/qmc4pde/}.
\newblock Accessed: 2019-08-31.

\bibitem{Kuo2016ApplicationOQ}
{\sc F.~Y. Kuo and D.~Nuyens}, {\em Application of quasi-{M}onte {C}arlo
  methods to elliptic {PDE}s with random diffusion coefficients: A survey of
  analysis and implementation}, Found. Comput. Math., 16 (2016),
  pp.~1631--1696.

\bibitem{KuoNuyens2018}
{\sc F.~Y. Kuo and D.~Nuyens}, {\em Application of quasi-{M}onte {C}arlo
  methods to {PDE}s with random coefficients -- an overview and tutorial}, in
  Monte Carlo and Quasi-Monte Carlo Methods 2016, A.~B. Owen and P.~W. Glynn,
  eds., Stanford, CA, August 14--19, 2018, pp.~53--71.

\bibitem{KSSSU}
{\sc F.~Y. Kuo, R.~Scheichl, C.~Schwab, I.~H. Sloan, and E.~Ullmann}, {\em
  {M}ultilevel quasi-{M}onte {C}arlo methods for lognormal diffusion problems},
  Math. Comp., 86 (2017), pp.~2827--2860.

\bibitem{Kuo2012QMCFEM}
{\sc F.~Y. Kuo, C.~Schwab, and I.~H. Sloan}, {\em Quasi-{M}onte {C}arlo finite
  element methods for a class of elliptic partial differential equations with
  random coefficients}, SIAM J. Numer. Anal., 50 (2012), pp.~3351--3374.

\bibitem{KSS2015}
{\sc F.~Y. Kuo, C.~Schwab, and I.~H. Sloan}, {\em {M}ulti-level quasi-{M}onte
  {C}arlo finite element methods for a class of elliptic {PDE}s with random
  coefficients}, Found. Comput. Math., 15 (2015), pp.~411--449.

\bibitem{Schwab}
{\sc C.~Schwab}, {\em {QMC} {G}alerkin {D}iscretization of {P}arametric
  {O}perator {E}quations}, in {M}onte {C}arlo and {Q}uasi-{M}onte {C}arlo
  {M}ethods 2012, J.~Dick, F.~Y. Kuo, G.~W. Peters, and I.~H. Sloan, eds.,
  Berlin, Heidelberg, 2013, Springer Berlin Heidelberg, pp.~613--629.

\bibitem{weyl}
{\sc M.~A. Shubin}, {\em {P}seudodifferential {O}perators and {S}pectral
  {T}heory}, Springer Ser. Sov. Math., Springer Verlag, 1987.

\bibitem{Troeltzsch}
{\sc F.~Tr\"{o}ltzsch}, {\em Optimal control of partial differential equations
  : theory, methods and applications}, American Mathematical Society,
  Providence, RI, 1st~ed., 2010.

\bibitem{vanBarelVandewalle}
{\sc A.~van Barel and S.~Vandewalle}, {\em {R}obust {O}ptimization of {PDE}s
  with {R}andom {C}oefficients {U}sing a {M}ultilevel {M}onte {C}arlo
  {M}ethod}, SIAM/ASA J. Uncertain. Quantif., 7 (2019), pp.~174--202.

\end{thebibliography}

\end{document}